\numberwithin{equation}{section}
\newcommand{\supp}{\mathrm{supp}\,}
\newcommand{\sign}{\mathrm{sign}\,}
\newcommand{\R}{\mathbb{R}}
\newcommand{\C}{\mathbb{C}}
\newcommand{\N}{\mathbb{N}}
\newcommand{\Y}{\mathcal{Y}}
\newcommand{\e}{\varepsilon}
\renewcommand{\epsilon}{\varepsilon}
\newcommand{\eps}{\varepsilon}
\newcommand{\sub}{h}
\newcommand{\bT}{\mathbf{T}}
\newcommand{\ug}{\underline g}
\newcommand{\ds}{\displaystyle}
\newtheorem{thm}{\bf Theorem}[section]
\newtheorem{prop}[thm]{\bf Proposition}
\newtheorem{rmq}[thm]{\bf Remark}
\newtheorem{lem}[thm]{\bf Lemma}
\newtheorem{defi}[thm]{Definition}
\newtheorem{cor}[thm]{Corollary}
\newcommand{\eb}[1]{{\color{blue}#1}}
\date\today
\author{Emeric Bouin\footnote{Corresponding author}\,
\footnote{Ecole Normale Sup\'erieure de Lyon, UMR CNRS 5669 'UMPA',  and INRIA Alpes, project-team NUMED,  46 all\'ee d'Italie, F-69364~Lyon~cedex~07, France. E-mail: \texttt{emeric.bouin@ens-lyon.fr}}\;, 
Vincent Calvez\footnote{Ecole Normale Sup\'erieure de Lyon, UMR CNRS 5669 'UMPA', and INRIA Alpes, project-team NUMED, 46 all\'ee d'Italie, F-69364~Lyon~cedex~07, France. E-mail: \texttt{vincent.calvez@ens-lyon.fr}}\;, 
Gr\'egoire Nadin\footnote{Universit\'e Pierre et Marie Curie-Paris 6, UMR CNRS 7598 'LJLL', BC187, 4 place de Jussieu, F-75252~Paris~cedex~05, France. E-mail: \texttt{nadin@ann.jussieu.fr}}}
\title{Propagation in a kinetic reaction-transport equation:\\ \eb{travelling waves and accelerating fronts}}
\begin{document}
\maketitle
\begin{abstract}
In this paper, we study the existence and stability of travelling wave solutions of a kinetic reaction-transport equation. 
The model describes particles moving according to a velocity-jump process, and proliferating thanks to a  
reaction term of monostable type. The boundedness of the velocity set appears to be a necessary and sufficient condition 
for the existence of positive travelling waves. The minimal speed of propagation of waves is obtained from an explicit dispersion relation. 
We construct the waves  using a technique of   sub- and supersolutions    and prove their \eb{weak} stability in a weighted $L^2$ space. In case of an unbounded velocity set, we prove a superlinear spreading. It appears that the rate of spreading depends  on the decay at infinity of the \eb{velocity distribution}. 
\eb{In the case of a Gaussian distribution, we prove that the front spreads as $t^{3/2}$.} 
\end{abstract}

\noindent{ \bf Key-words:}  Kinetic equations, travelling waves, dispersion relation, superlinear spreading. 

\section{Introduction}


We address the issue of front propagation in a reaction-transport equation of kinetic type, 
\begin{equation} \label{eq:kinKPP}
\left\{\begin{array}{ll}
\partial_t g + v\partial_x g = \eb{ \left(M(v) \rho_g - g\right) + r\rho_g \left( M(v) - g\right)}  \,,\quad & (t,x,v) \in \R_+ \times \R \times V\, ,\smallskip\\
g(0,x,v) = g^0(x,v)\,, \quad & (x,v) \in   \R \times V\, .
\end{array}\right.
\end{equation}
Here, the density $g(t,x,v)$ describes a population of individuals in a continuum setting, and $\rho_g(t,x) = \int_V g(t,x,v)\, dv$ is the macroscopic density. The subset $V\subset \R$ is the set of all possible velocities. Individuals move following a velocity-jump process: they run with speed $v\in V$, and change velocity at rate 1. They instantaneously choose a new velocity with the probability distribution $M$.  Unless otherwise stated, we assume in this paper that  $V$ is symmetric and  $M$ satisfies the following properties: $M\in L^1(V) \cap\mathcal{C}^0(V) $, and
\begin{equation}\label{eq:hypM}
\int_{V} M(v)dv = 1\,, \quad \int_{V} v M(v) dv = 0\,, \quad \int_{V}v^2 M(v) dv = D<+\infty\,.
\end{equation}
In addition, individuals are able to reproduce, with rate $r>0$. New individuals start with a velocity chosen at random with the same probability distribution $M$. We could have chosen a different distribution without changing the main results, but we keep the same for the sake of clarity. Finally, we include a quadratic saturation term,  which accounts for local competition between individuals, regardless of their speed.

The main motivation for this work comes from the study of pulse waves in bacterial colonies of 
{\em Escherichia coli} \cite{Adler66,Keller-Segel71,Saragosti1,Saragosti2}. Kinetic models have been proposed to describe the run-and-tumble motion 
of individual bacteria at the mesoscopic scale \cite{Alt80,ODA88}. Several works have been dedicated to derive macroscopic equations from those kinetic 
models in the diffusion limit \cite{Hillen-Othmer00,Erban-Othmer04,Chalub04,Saragosti1}. Recently it has been shown that for some set of experiments, 
the diffusion approximation is not valid, so one has to stick to the kinetic description at the mesoscopic scale to closely compare with data \cite{Saragosti2}. 

There is one major difference between this motivation and model \eqref{eq:kinKPP}. 
Pulse waves  in bacterial colonies of {\em E. coli} are mainly driven by chemotaxis which generates macroscopic fluxes. 
Growth of the population can be merely ignored in such models. In model  \eqref{eq:kinKPP} however, growth and dispersion are the main reasons for front propagation, 
and there is no macroscopic flux due to the velocity-jump process since the distribution $M$ satisfies $\int_{V} v M(v) dv = 0$. For the sake of applications, 
we also refer to the growth and branching of the plant pathogen {\em Phytophthora} by mean of a reaction-transport equation similar to \eqref{eq:kinKPP} \cite{Phytophthora}.

There is a strong link between \eqref{eq:kinKPP} and the classical Fisher-KPP equation \cite{Fisher37,KPP}. In case of a suitable balance between scattering and growth (more scattering than growth), we can perform the parabolic rescaling $(r,t,x) \mapsto \left(\eps^2 r, \frac{t}{\eps^2} , \frac{x}{\eps} \right)$  in \eqref{eq:kinKPP},
\begin{equation} \label{eq:kinKPP2}
\epsilon^2 \partial_t g_\eps +  \epsilon  v \partial_x g_\eps  = \left(M(v) \rho_{g_\eps} - g_\eps\right) + \epsilon^2  r\rho_{g_\eps} \left( M(v) - g_\eps \right)\, .
\end{equation}
The diffusion limit yields $g_\eps\to M(v) \rho_0$, where $\rho_0$ is solution to the Fisher-KPP equation (see \cite{Cuesta12} for example), 
\begin{equation}\label{KPP}
\partial_t \rho_0- D \partial_{xx} \rho_0 = r \rho_0 \left( 1 - \rho_0 \right)\, . 
\end{equation}
We recall that for nonincreasing initial data decaying sufficiently fast at $x = +\infty$, the solution of (\ref{KPP}) behaves asymptotically as a travelling front moving at the minimal speed $c^* = 2 \sqrt{rD}$  \cite{KPP,Aronson-Weinberger75}. In addition, this front is stable in some weighted $L^2$ space \cite{Kirchgassner92,Gallay94}. Therefore it is natural to address the same questions for \eqref{eq:kinKPP}. We give below the definition of a travelling wave for equation \eqref{eq:kinKPP}. 

{\begin{defi}\label{def:deftw}
A function $g(t,x,v)$ is a smooth travelling wave solution of speed $c \in \R_+$ of equation \eqref{eq:kinKPP} if it can be written $g(t,x,v) = f \left( x - ct , v \right)$, where  the profile  $f \in \mathcal{C}^2 \left( \R \times V \right)$ satisfies \begin{equation} \label{eq:deftw}\forall (z,v) \in\R\times V\,, \quad  0\leq f(z,v)\leq M(v)\,, \quad \lim_{z\to -\infty} f(z,v)=M(v)\,, \quad \lim_{z\to +\infty} f(z,v)=0\;.
\end{equation}
\end{defi}}

In fact, $f$ is a solution of the stationary equation in the moving frame $z = x - ct$, for some $c\geq0$,
\begin{equation} \label{eqkinwave}
 ( v - c  )\partial_{z} f = \left(M(v) \rho_f - f\right) + r \rho_f \left( M(v) - f\right)\,, \quad (z,v) \in\R\times V\,.
\end{equation} 



The existence of travelling waves in reaction-transport equations has been adressed by Schwetlick \cite{Schwetlick00,Schwetlick02} 
for a similar class of equations. First, the set $V$ is bounded and  $M$ is the uniform distribution over $V$. Second, 
the nonlinearity can be chosen more generally (either monostable as here, or bistable), 
but it depends only on the macroscopic density $\rho_g$ \cite[Eq. (4)]{Schwetlick00}.  
For the monostable case, using a quite general method he has proved the  existence of travelling waves of speed {$c$} for any {$c \in [c^*,\sup V)$}, 
a result very similar to the Fisher-KPP equation.  
We emphasize that, although the equations differ  between his work and ours, 
they coincide in the linearized regime of low density $g\ll 1$. On the contrary to Schwetlick, 
we do not consider a general nonlinearity and we restrict to the logistic case, but we consider general velocity kernels $M(v)$.


More recently, the rescaled equation (\ref{eq:kinKPP2}) has been  investigated by Cuesta, Hittmeir and Schmeiser  \cite{Cuesta12}  in the parabolic
regime $\e \ll 1$. Using a micro-macro decomposition, they construct possibly  oscillatory    travelling waves of speed $c\geq 2\sqrt{rD}$ for $\e$ small enough (depending on $c$). In addition, when the set of admissible speeds $V$ is bounded, $c> 2\sqrt{rD}$, and $\e$ is small enough, they prove that the travelling wave constructed in this way is indeed nonnegative.

Lastly, when $M$ is the measure 
$M = \frac{1}{2} (\delta_{ -\nu  }+\delta_{  \nu  })$   for some $\nu>0$, equation (\ref{eq:kinKPP}) is analogous to the reaction-telegraph equation for the macroscopic density $\rho_g$ (up to a slight change in the nonlinearity however). This equation has been the subject  of  a large number of studies \cite{DO,Hadeler,Holmes93,GallayRaugel,Mendez96,Fedotov98,Fedotov99,Fort.Mendez99,Mendez04,Mendez-book}. Recently, the authors proved the existence of a minimal speed $c^*$ such that travelling waves exist for all speed $c\geq c^*$ \cite{Bouin-Calvez-Nadin}. Moreover these waves are stable in some $L^2$ weighted space, with a weight which differs from the classical exponential weight arising in the stability theory of the Fisher-KPP equation, see {\em e.g.} \cite{Kirchgassner92}.
As the reaction-telegraph equation involves both parabolic and hyperbolic contributions, the smoothness of the wave depends on the balance between these contributions. In fact there is a transition between a parabolic (smooth waves) and a hyperbolic regime (discontinuous waves), see Remark \ref{rem:two velocities} below. The authors also prove the existence of supersonic waves, having speed $c> \nu$ (see Remark \ref{rem:supersonic}).

\medskip

The aim of the present paper is to investigate the existence and stability of travelling waves for equation (\ref{eq:kinKPP}) 
for arbitrary kernels $M$ satisfying (\ref{eq:hypM}). For the existence part, we shall use the method of sub- and supersolutions, which do not rely 
on a perturbation argument. The stability part relies on the derivation of a suitable weight from which we can build a Lyapunov functional for the linearized version of \eqref{eq:kinKPP}.
The crucial assumption for the existence of travelling waves is the boundedness of $V$. We prove in fact that under the condition $(\forall v\in \R)\; M(v)>0$, there cannot exist a positive travelling wave. We finally investigate the spreading rate \eb{when $M$ is a Gaussian distribution}. 

In the last stage of writting of this paper, we realized that similar issues were formally addressed by M\'endez et al. for a slightly different equation admitting the same linearization near the front edge   \cite{Mendez10}. Our results are in agreement with their predictions.



\subsection*{Existence of travelling waves when the velocity set is bounded.}

\begin{thm} \label{existence-tw} 
Assume that the set $V$ is compact, and that $M\in \mathcal{C}^0(V)$ satisfies \eqref{eq:hypM}. Let $v_{\rm max} = \sup V$.
There exists a speed $c^* \in (0,{ v_{\rm max} })$ such that  for all $c \in [ c^* , {v_{\rm max} } )$,  there exists a travelling wave $f(x-ct,v)$ solution of \eqref{eqkinwave} with speed $c$. 
The travelling wave is nonincreasing with respect to the space variable: $\partial_z f \leq 0$. Moreover, if $\inf_V M >0$ then there exists no positive travelling wave of speed $c\in [0,c^*)$.
\end{thm}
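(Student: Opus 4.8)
The plan is to treat the three assertions separately. For the existence of travelling waves of speed $c\in[c^*,v_{\rm max})$, I would set up a sub- and supersolution scheme for the stationary equation \eqref{eqkinwave} in the moving frame. The natural supersolution comes from the linearization of \eqref{eqkinwave} near the front edge $g\ll 1$: one looks for solutions of the form $f(z,v)=M(v)Q_\lambda(v)e^{-\lambda z}$, plugs this into the linear part $(v-c)\partial_z f = (M(v)\rho_f - f) + rM(v)\rho_f$, and after dividing by $M(v)e^{-\lambda z}$ obtains an eigenvalue-type problem for $Q_\lambda$ together with a scalar constraint; imposing solvability yields the \emph{dispersion relation} connecting $\lambda$ and $c$. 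Because $V$ is bounded, $v-c$ does not vanish for $c>v_{\rm max}$, and one checks the dispersion relation has a real root $\lambda(c)>0$ exactly for $c\geq c^*$, with $c^*$ defined as the infimum of admissible speeds; this is where compactness of $V$ enters decisively. The supersolution is then $\overline{f}=\min\{M(v),\,M(v)Q_\lambda(v)e^{-\lambda z}\}$ (suitably normalized so that $Q_\lambda>0$), and the subsolution is a perturbation like $\underline{f}=\max\{0,\,M(v)Q_\lambda(v)e^{-\lambda z}(1-Be^{-\eta z})\}$ with $\eta>0$ small and $B$ large, exploiting that a second root $\lambda+\eta$ of the linearized problem still satisfies the relevant inequality. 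One then invokes a monotone iteration / Perron argument for the transport equation to produce a solution $f$ between $\underline f$ and $\overline f$, and verifies the boundary conditions \eqref{eq:deftw} from the squeeze.

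For monotonicity $\partial_z f\leq 0$, I would differentiate \eqref{eqkinwave} in $z$ and apply a sliding method (translate the profile $f(\cdot+\tau,\cdot)$ and compare with $f$), or equivalently a maximum-principle argument on $w=\partial_z f$: since $w$ satisfies a linear transport equation with the same structure and $w\to 0$ at $\pm\infty$, a well-chosen comparison shows $w\leq 0$. The transport term $(v-c)\partial_z$ has a sign depending on $v\lessgtr c$, so the maximum principle here is of characteristic type and must be handled velocity by velocity, integrating along characteristics and using the coupling through $\rho_f$; the fact that $c<v_{\rm max}$ means some characteristics go each way, which is exactly the subtlety.

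For the nonexistence of waves with $c\in[0,c^*)$ under $\inf_V M>0$: suppose $f$ is such a wave. The key is that near $z=+\infty$, where $f\ll 1$, the profile is governed by the linearized operator, and since $\inf_V M>0$ one gets a genuine lower bound $\rho_f(z)\gtrsim$ (something), forcing $f$ to decay like an exponential $e^{-\lambda z}$ whose rate must solve the dispersion relation — but for $c<c^*$ that relation has no real root, so any solution of the linearized problem must oscillate (complex roots), contradicting positivity $f\geq 0$. Making this rigorous requires (i) showing $\rho_f>0$ everywhere and extracting the precise asymptotic decay rate of $\rho_f$ as $z\to+\infty$ (e.g.\ via a Laplace-transform or Ikehara-type argument, or by constructing oscillating subsolutions that must dip below zero), and (ii) ruling out anomalously fast decay. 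The main obstacle, in my view, is this last point: controlling the tail behaviour of $\rho_f$ precisely enough to force the oscillation, because the kinetic coupling makes the "characteristic equation" an implicit transcendental relation rather than a polynomial, so standard ODE tail asymptotics do not apply directly and one must argue through the integral equation satisfied by $\rho_f$.
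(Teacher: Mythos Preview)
Your plan for existence when $c>c^*$ is essentially the paper's: the paper linearizes at the front edge, obtains $F_\lambda(v)=\frac{(1+r)M(v)}{1+\lambda(c-v)}$ and the dispersion relation $\int_V F_\lambda\,dv=1$, takes $\overline f=\min\{M,e^{-\lambda z}F_\lambda\}$ as supersolution and $\underline f=\max\{0,e^{-\lambda z}F_\lambda-Ae^{-(\lambda+\gamma)z}F_{\lambda+\gamma}\}$ as subsolution. One small point: the paper does not run a Perron iteration on the stationary problem but solves the \emph{time-dependent} equation in the moving frame with initial datum $\overline f$; monotonicity in $t$ (from $\overline f$ being a supersolution) gives convergence to a wave, and monotonicity in $z$ comes for free by comparing translates $g(t,x+h,v)\leq g(t,x,v)$. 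You should also note that the subsolution construction needs $I(\lambda+\gamma;c)<1$, which is only available for $c>c^*$; the paper recovers the endpoint $c=c^*$ by taking a sequence $c_n\downarrow c^*$, normalizing by $\rho_{f_n}(0)=\tfrac12$, and passing to the limit via uniform equicontinuity (Ascoli). Your sketch does not address this endpoint.

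For non-existence below $c^*$ your route and the paper's diverge. You propose to extract the precise exponential tail of $\rho_f$ and argue that, since the dispersion relation has no real root for $c<c^*$, the profile would have to oscillate --- and you correctly flag the tail analysis as the hard step. The paper bypasses this entirely. It uses the complex roots not to study the tail of $f$ but to \emph{build} a compactly supported, arbitrarily small subsolution $h$ of the equation at a speed $c^0\in(c,c^*)$: the real part of $e^{-\lambda^0 x}F^0(v)$ with $\lambda^0\in\mathbb C\setminus\mathbb R$ oscillates in $x$, so its positive part on one period, truncated, is a subsolution (after shrinking to absorb the nonlinearity). If a wave $f$ of speed $c$ existed, positivity of $f$ lets one slide $h$ underneath it; then the comparison principle gives $f(x-ct,v)\geq h(x-c^0 t,v)$ for all $t$, and evaluating at $x=c^0 t$ with $t\to\infty$ yields $0=f(+\infty,v)\geq h(0,v)>0$, a contradiction. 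This ``moving bump'' argument is both shorter and avoids any Ikehara/Laplace analysis of the kinetic tail; the assumption $\inf_V M>0$ enters to ensure strict positivity of $f$ (via the strong maximum principle) so that the bump fits underneath.
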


The minimal speed $c^*$ is given through the following implicit dispersion relation. First, we observe that, for each $\lambda>0$, there is a unique $c(\lambda)\in (v_{\max} - \lambda^{-1},v_{\max})$ such that 
\begin{equation}\label{eq:dispersion-intro}
(1 + r) \int_V \frac{M(v)}{1 +\lambda(c(\lambda) - v)}\, dv = 1\, .
\end{equation}
Then we have the formula
\[ c^*= \inf_{\lambda>0} c(\lambda)\, .  \]

\begin{rmq}\label{rem:two velocities}
In the special case of two possible velocities only  \cite{Bouin-Calvez-Nadin}, corresponding to $M(v) = \frac12 \left(\delta_{-v_{\rm max}}+ \delta_{ v_{\rm max} }\right) $, two regimes have to be distinguished, namely $r<1$ and $r\geq1$. In the case $r\geq 1$ the travelling wave with minimal speed vanishes on a half-line. There, the speed of the wave is not characterized by the linearized problem for $f\ll 1$. Note that this case  is not contained in the statement of Theorem \ref{existence-tw} since it is assumed that $M\in \mathcal{C}^0(V)$. This makes a clear difference between the case of a measure $M$ which is absolutely continuous with respect to the Lebesgue measure, and the case of a  measure with atoms. 
\end{rmq}

\begin{rmq}\label{rem:supersonic}
We expect that travelling waves exist for any $c\geq c^*$, although this seems to contradict the finite speed of propagation when $c>v_{\rm max}$. In fact    supersonic  waves corresponding to $c>v_{\rm max}$ should be driven by growth mainly, as it is the case in a simplified model with only two speeds \cite{Bouin-Calvez-Nadin}. A simple argument to support the existence of such waves consists in eliminating the transport part, and seeking waves driven by growth only, 
$- c\partial_z f = M(v)\rho_f - f + r\rho_f(M -f)$. Integrating with respect to $v$ yields a logistic equation for $\rho_f$, $- c\partial_z \rho_f = r\rho_f(1 - \rho_f)$, which as a solution connecting $1$ and $0$ for any positive $c$. However these waves are quite artificial and we do not address this issue  further. 
\end{rmq}

We now define $c^*=c^*(M)$ and investigate the dependence of the minimal speed upon $M$.
In the following Proposition, we  give some general bounds on the minimal speed. 
\begin{prop} \label{prop:bound on c*} Under the same conditions as Theorem \ref{existence-tw}, assume in addition that $M$ is symmetric. Then, the minimal speed satisfies the following properties,
\begin{enumerate}[a-]
\item {\em [Scaling]} For $\sigma>0$, define $M_\sigma (v)= \sigma^{-1} M\left(\sigma^{-1}v \right)$, \eb{and rescale the velocity set accordingly $(V_\sigma := \sigma V)$,} then 
\begin{equation*}
c^*(M_\sigma)= \sigma c^*(M).
\end{equation*}
\item {\em [Rearrangement]} Denote by $M^\star$ the  Schwarz decreasing rearrangement of the function $M$   \cite{Lieb-Loss} 
and $M_\star  = -\left( -M \right)^\star$ the  Schwarz increasing rearrangement of the density distribution $M$, then
\begin{equation*}
c^*(M^\star)\leq c^* (M) \leq c^*(M_\star).
\end{equation*}
\item {\em [Comparison]}  If $r<1$ then 
 \begin{equation*}
\frac{2\sqrt{rD}}{1 + r} \leq c^* (M) \leq \frac{2\sqrt{r}}{1 + r}v_{\rm max} \, ,
\end{equation*}
whereas,  if $r\geq1$ then 
\begin{equation*}
 \sqrt{D} \leq c^* (M) \leq  v_{\rm max}  \, ,
\end{equation*}
\item {\em [Diffusion limit]} In the diffusion limit $(r,t,x) \mapsto \left(\eps^2 r, \frac{t}{\eps^2} , \frac{x}{\eps} \right)$, \eb{the dispersion relation for the rescaled equation \eqref{eq:kinKPP2} reads
\begin{equation}
(1 + \epsilon^2 r)  \int_V \dfrac{ M(v)}{1 + \epsilon^2\lambda(c - v/\eps)} \, dv\, = 1.
\end{equation}
\eb{We recover the KPP speed of the wave in the diffusive limit},
\[\lim_{\e\to 0} c^*_\e = 2\sqrt{rD} \]}
\end{enumerate}
\end{prop}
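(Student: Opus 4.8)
\emph{Setup.} Write $H_M(\lambda,c):=(1+r)\int_V\frac{M(v)}{1+\lambda(c-v)}\,dv$, defined for $\lambda>0$ and $c>\sup(\supp M)-\lambda^{-1}$; there $c\mapsto H_M(\lambda,c)$ is strictly decreasing, so $c_M(\lambda)$ from \eqref{eq:dispersion-intro} is the unique root of $H_M(\lambda,\cdot)=1$ and $c^*(M)=\inf_{\lambda>0}c_M(\lambda)$. The whole proof rests on the remark that a pointwise bound $H_{M_1}(\lambda,\cdot)\le H_{M_2}(\lambda,\cdot)$ forces $c_{M_1}(\lambda)\ge c_{M_2}(\lambda)$, hence $c^*(M_1)\ge c^*(M_2)$; when the compared kernels have different supports one only needs the bound on the admissible range of the kernel appearing on the left, and $\sup(\supp M)\ge\sqrt D$ for symmetric $M$ of variance $D$, which legitimates the evaluations below (and if $c_{M_1}(\lambda)$ leaves the admissible range of $M_2$ the conclusion is trivial). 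For (a), substituting $v=\sigma w$ in $H_{M_\sigma}(\mu,c)=1$ rewrites it as $H_M(\mu\sigma,c/\sigma)=1$, so $c_{M_\sigma}(\mu)=\sigma\,c_M(\mu\sigma)$ and $c^*(M_\sigma)=\sigma c^*(M)$ after taking $\inf_{\mu>0}$.

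\emph{Item (b).} Since $M$ is even, in $H_M$ one may replace $\phi_{\lambda,c}(v):=(1+\lambda(c-v))^{-1}$ by its even part
\[
\Psi_{\lambda,c}(v):=\tfrac12\bigl(\phi_{\lambda,c}(v)+\phi_{\lambda,c}(-v)\bigr)=\frac{1+\lambda c}{(1+\lambda c)^2-\lambda^2v^2},
\]
which on the admissible range is even and nondecreasing in $|v|$. Write $\Psi_{\lambda,c}=K-\Theta_{\lambda,c}$ with $K=\sup_V\Psi_{\lambda,c}$, so $\Theta_{\lambda,c}\ge0$ is symmetric decreasing; since $\int_V M=\int_V M^\star=\int_V M_\star=1$ we get $H_M(\lambda,c)=(1+r)\bigl(K-\int_V M\,\Theta_{\lambda,c}\bigr)$ and likewise for $M^\star,M_\star$, and the classical rearrangement inequalities $\int_V M_\star\,\Theta_{\lambda,c}\le\int_V M\,\Theta_{\lambda,c}\le\int_V M^\star\,\Theta_{\lambda,c}$ (see \cite{Lieb-Loss}) give $H_{M^\star}\le H_M\le H_{M_\star}$, whence $c^*(M^\star)\le c^*(M)\le c^*(M_\star)$.

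\emph{Item (c).} We compare $M$ with the symmetric two–velocity kernels $\mu_a:=\tfrac12(\delta_{-a}+\delta_a)$, writing $c^*_a:=\inf_{\lambda>0}$ of the root of $H_{\mu_a}(\lambda,\cdot)=1$. For the lower bound, $\Psi_{\lambda,c}(v)=\widetilde\Psi_{\lambda,c}(v^2)$ with $\widetilde\Psi_{\lambda,c}(t)=\tfrac{1+\lambda c}{(1+\lambda c)^2-\lambda^2t}$ convex, so Jensen gives $\int_V M(v)\Psi_{\lambda,c}(v)\,dv\ge\widetilde\Psi_{\lambda,c}\bigl(\int_V v^2M\,dv\bigr)=\widetilde\Psi_{\lambda,c}(D)=\Psi_{\lambda,c}(\sqrt D)$, i.e.\ $H_M\ge H_{\mu_{\sqrt D}}$, hence $c^*(M)\ge c^*_{\sqrt D}$. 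For the upper bound, $\phi_{\lambda,c}$ is convex in $v$, hence lies below its chord on $[-v_{\max},v_{\max}]$; integrating against $M$ and using $\int_V M=1$, $\int_V vM=0$ yields $\int_V M\,\phi_{\lambda,c}\le\tfrac12\bigl(\phi_{\lambda,c}(-v_{\max})+\phi_{\lambda,c}(v_{\max})\bigr)$, i.e.\ $H_M\le H_{\mu_{v_{\max}}}$, hence $c^*(M)\le c^*_{v_{\max}}$. Finally $H_{\mu_a}(\lambda,\cdot)=1$ reads, with $u=1+\lambda c$, as $u^2-(1+r)u-\lambda^2a^2=0$, and minimising $c=(u-1)/\lambda$ over $\lambda$ (a one–line computation, or \cite{Bouin-Calvez-Nadin}) gives $c^*_a=\tfrac{2\sqrt r}{1+r}a$ if $r<1$ and $c^*_a=a$ if $r\ge1$. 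Taking $a=\sqrt D$ and $a=v_{\max}$ produces exactly the four claimed inequalities.

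\emph{Item (d), and the main difficulty.} Inserting $g_\eps=e^{-\mu(x-c_\eps t)}Q(v)$ into the linearisation of \eqref{eq:kinKPP2} and setting $\lambda=\eps\mu$, $\gamma=\eps c_\eps$ gives $(1+\eps^2r)\int_V\frac{M(v)}{1+\lambda(\gamma-v)}\,dv=1$, i.e.\ the original relation \eqref{eq:dispersion-intro} with $r$ replaced by $\eps^2 r$; hence $c^*_\eps=\eps^{-1}c^*(M;\eps^2r)$, where $c^*(M;\rho)$ is the minimal speed for reaction rate $\rho$. By (c), $\tfrac{2\sqrt{\eps^2rD}}{1+\eps^2r}\le c^*(M;\eps^2r)$, so $\liminf_{\eps\to0}c^*_\eps\ge2\sqrt{rD}$. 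For the reverse, fix $\delta>0$ and test $H_M$ at $\lambda_0=\sqrt{\eps^2r/D}$, $c_1=(1+\delta)2\sqrt{\eps^2rD}$: with $u_1=1+\lambda_0c_1=1+2(1+\delta)\eps^2r$ and $\int_V\frac{M}{u_1-\lambda_0v}\,dv=u_1^{-1}\bigl(1+\lambda_0^2D\,u_1^{-2}+O(\lambda_0^3)\bigr)$ (all moments of $M$ being finite on a compact $V$) one finds $H_M(\lambda_0,c_1)=1-2\delta\eps^2r+O(\eps^3)<1$ for $\eps$ small, so $c_M(\lambda_0)<c_1$ and $c^*(M;\eps^2r)\le c_1$; dividing by $\eps$, then $\eps\to0$, then $\delta\to0$, gives $\limsup_{\eps\to0}c^*_\eps\le2\sqrt{rD}$. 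The genuinely delicate step is the lower bound in (c) — and similarly (b): crude Jensen or Cauchy–Schwarz applied to $\phi_{\lambda,c}$, or a second–order truncation of \eqref{eq:dispersion-intro}, are all too weak and actually point the wrong way; the right move is to pass to the symmetrised kernel $\Psi_{\lambda,c}$ and exploit that it is at once convex as a function of $v^2$ (used for the lower bound) and monotone in $|v|$ (used for the rearrangement). The remaining book-keeping of admissible $c$-ranges for kernels with different right endpoints is routine.
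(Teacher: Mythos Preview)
Your argument is correct and follows essentially the same route as the paper: symmetrise the kernel to $\Psi_{\lambda,c}(v)=\frac{1+\lambda c}{(1+\lambda c)^2-\lambda^2 v^2}$, then use Hardy--Littlewood for (b), Jensen on the convex map $t\mapsto\bigl((1+\lambda c)^2-\lambda^2 t\bigr)^{-1}$ for the lower bound in (c), and the two-speed dispersion relation for the explicit values of $c^*_a$. Two remarks. First, the monotonicity statement in your \emph{Setup} is written the wrong way round: $H_{M_1}(\lambda,\cdot)\le H_{M_2}(\lambda,\cdot)$ together with the strict decrease in $c$ gives $c_{M_1}(\lambda)\le c_{M_2}(\lambda)$ and hence $c^*(M_1)\le c^*(M_2)$, not the reverse; fortunately every application you make of this principle in (b)--(c) uses the correct direction, so this is only a slip in the preamble. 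Second, your variants are mild but pleasant: for the upper bound in (c) you use convexity of $\phi_{\lambda,c}$ and the chord on $[-v_{\max},v_{\max}]$, whereas the paper simply bounds $v^2\le v_{\max}^2$ inside $\Psi_{\lambda,c}$; and for (d) you obtain the lower bound directly from (c) and the upper bound by testing at the explicit $\lambda_0=\eps\sqrt{r/D}$, which is a bit cleaner than the paper's Taylor expansion of $I_\eps(\lambda;c)$ at fixed $\lambda$ followed by minimisation (the latter leaves implicit the uniformity in $\lambda$ needed to pass to the infimum).
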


\subsection*{Spreading of the front.}

In the case where $V$ is compact, we prove that for suitable initial data $g(0,x,v)$, the front spreads asymptotically with speed $c^*$, in a weak sense.

\begin{prop} \label{prop:spreadingbounded}
Under the same conditions as Theorem \ref{existence-tw}, assume in addition that $\inf_V M>0$. Let $g^0 \in L^\infty (\R\times V)$ such that $0\leq g^0 (x,v)\leq M(v)$ for all $(x,v) \in \R\times V$. 
Let $g$ be the solution of the Cauchy 
problem \eqref{eq:kinKPP}. Then 
\begin{enumerate}
 \item if there exists $x_R$ such that
$g^0 (x,v) = 0$ for all $x\geq x_R$ and $v\in V$, then for all $c>c^*$,  
 $$(\forall v\in V)\quad \lim_{t\to +\infty} \left( \sup_{x\geq ct}  g(t,x,v)\right) =0\,,$$
\item if there exists $x_L$ and $\gamma \in (0,1)$ such that $g^0 (x,v) \geq \gamma M(v)$ for all $x\leq x_L$ and $v\in V$, then 
for all $c<c^*$,
$$(\forall v\in V)\quad \lim_{t\to +\infty} \left( \sup_{x\leq ct}  |M(v) - g(t,x,v)|\right) =0\,,$$
\end{enumerate}
where $c^*$ is the minimal speed of existence of travelling waves given by Theorem \ref{existence-tw}. 
\end{prop}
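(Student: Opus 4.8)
The plan is to establish both statements via comparison principles, combining the travelling waves constructed in Theorem \ref{existence-tw} with exponentially growing/decaying sub- and supersolutions built directly from the dispersion relation \eqref{eq:dispersion-intro}. First I would record the comparison principle for \eqref{eq:kinKPP}: if $\underline g$ and $\overline g$ are respectively a sub- and a supersolution with $\underline g(0,\cdot,\cdot)\le \overline g(0,\cdot,\cdot)$, then $\underline g \le \overline g$ for all time; this follows because the nonlinearity $(\rho,g)\mapsto \rho(M(v)-g)+r(M(v)\rho - g)$ is, for $0\le g\le M(v)$ and $0\le\rho\le 1$, quasimonotone in the appropriate sense (monotone increasing in $\rho$, and the $-g$ and $-rg$ terms are handled by the usual Gronwall trick on $g e^{(1+r)t}$). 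One also needs the a priori bound $0\le g(t,x,v)\le M(v)$, which is itself a consequence of comparison with the constant sub/supersolutions $0$ and $M(v)$, using $\int_V M=1$ so that $\rho_g\le 1$ is propagated.

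For statement (1) — spreading upper bound — I would linearize: since $0\le g\le M$, we have $\rho_g\le 1$, hence $g$ is a subsolution of the linear equation $\partial_t h + v\partial_x h = (1+r)(M(v)\rho_h - h)$. I then look for a supersolution of this linear problem of the form $h(t,x,v) = e^{-\lambda(x - c(\lambda)t)}\,\phi_\lambda(v)$, which forces $(v-c(\lambda))(-\lambda)\phi_\lambda = (1+r)(M(v)\langle\phi_\lambda\rangle - \phi_\lambda)$, i.e. $\phi_\lambda(v) = \frac{(1+r)\langle\phi_\lambda\rangle M(v)}{1+r - \lambda(c(\lambda)-v)}$; normalizing $\langle\phi_\lambda\rangle = 1$ and integrating in $v$ gives exactly the dispersion relation \eqref{eq:dispersion-intro}, with $c(\lambda) < v_{\max}$ guaranteeing positivity of the denominator for $v\in V$. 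Since $g^0$ is compactly supported to the right and bounded by $M$, for each $\lambda$ one can choose $A_\lambda$ large so that $g^0(x,v)\le A_\lambda e^{-\lambda(x-x_R)}\phi_\lambda(v)$; comparison then gives $g(t,x,v)\le A_\lambda' e^{-\lambda(x-c(\lambda)t)}\phi_\lambda(v)$, so $\sup_{x\ge ct} g(t,x,v)\to 0$ whenever $c>c(\lambda)$. Taking the infimum over $\lambda>0$ and using $c^* = \inf_\lambda c(\lambda)$ yields the claim for all $c>c^*$.

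For statement (2) — invasion from the left — I would argue that the solution converges to $M(v)$ behind any front moving slower than $c^*$. The route is: by comparison with the compactly-left-supported datum $\delta M \mathbf 1_{x\le x_L}$ and then with translates of the minimal-speed travelling wave $f(x-c^*t-x_0,v)$ (or, to be safe, waves of speed $c\in(c^*,v_{\max})$ and a limiting argument, or a direct construction of a compactly supported subsolution that grows and spreads at speed approaching $c^*$), one shows $\liminf_{t\to\infty}\inf_{x\le ct} g(t,x,v)\ge$ something positive for $c<c^*$; combined with the upper bound $g\le M$ and a standard argument upgrading "bounded below away from $0$ on a spreading set" to "converges to $M$" — using that $M(v)$ is the only stable steady state of the spatially homogeneous dynamics and a sliding/relative-compactness argument on the moving frame — one gets $\sup_{x\le ct}|M(v)-g(t,x,v)|\to 0$. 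An efficient alternative is to apply statement (1)'s machinery to $\tilde g = M - g$, which is nonnegative, bounded by $M$, and a subsolution of a linear equation with the same linearization, to control $\tilde g$ from above on $\{x\le ct\}$; this is where the hypothesis $\inf_V M>0$ is essential, both to make $\delta M$ a legitimate below-bound and to ensure the linearized operator governing $M-g$ has the right sign structure.

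The main obstacle I expect is statement (2): the comparison argument only cheaply gives a positive lower bound on a spreading region, and promoting this to genuine convergence to the full Maxwellian $M(v)$, uniformly on $\{x\le ct\}$ and for every fixed $v$, requires either a careful construction of accurate subsolutions approaching height $M(v)$ (not just height $\delta M(v)$) with spreading speed arbitrarily close to $c^*$, or a relative-compactness/$\omega$-limit-set argument in the kinetic setting combined with a Liouville-type rigidity result ruling out intermediate steady states — this rigidity is not entirely routine for the transport equation and is the technical heart of the proof. The upper-bound statement (1), by contrast, is essentially a one-parameter exponential supersolution plus the comparison principle, and should be straightforward once the dispersion relation and comparison principle are in hand.
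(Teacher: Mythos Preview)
Your plan for part~(1) is sound and close to the paper's, which however takes a shortcut: rather than building exponential supersolutions from the dispersion relation, it uses $\kappa f(x-c^*t,v)$, with $\kappa>1$ and $f$ the minimal-speed wave already constructed in Theorem~\ref{existence-tw}, as a supersolution of the full nonlinear equation, combined with monotonicity in $x$ of the solution issued from a monotone majorant datum. Your direct exponential route also works, but the linearization needs correcting: the right majorant is $\partial_t h + v\partial_x h + h = (1+r)M(v)\rho_h$ (absorption coefficient $1$, not $1+r$), yielding $\phi_\lambda(v) = (1+r)M(v)/\bigl(1+\lambda(c-v)\bigr)$ and the dispersion relation~\eqref{eq:dispersion-intro}. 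With your coefficient $(1+r)$ on the $-h$ term, one checks that $g$ is actually a \emph{super}solution of your linear equation (since $rg(1-\rho_g)\ge 0$), so the comparison would go the wrong way.

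For part~(2), your primary route is exactly the paper's: the compactly supported subsolution travelling at speed $c^0\in(c,c^*)$ is precisely Lemma~\ref{lem:posHarnack} (the same object used to prove non-existence of waves below $c^*$), and the upgrade from ``positive lower bound on a spreading set'' to ``converges to $M$'' is carried out by time-monotonicity in the $c^0$-moving frame followed by a sliding argument (Lemma~\ref{lem:sliding}) that uses the strong maximum principle of Proposition~\ref{prop-mp} to rule out any limit other than $M$. You correctly identify this as the technical heart.

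Your ``efficient alternative'' via $\tilde g = M-g$, however, does not work. A computation gives $\partial_t \tilde g + v\partial_x \tilde g + (1+r)\tilde g = M(v)\rho_{\tilde g} + r\rho_{\tilde g}\tilde g$, whose linearization around $\tilde g=0$ is dissipative (integrating in $v$ yields $\partial_t\rho_{\tilde g}=-r\rho_{\tilde g}$) and carries no information about the speed $c^*$: that speed is determined by the instability at $g=0$, i.e.\ at $\tilde g=M$, so the change of variables does not linearize the relevant dynamics. Moreover, under the sole hypothesis of part~(2) the datum $\tilde g^0$ can equal $M(v)$ on an entire right half-line and thus cannot be dominated by any exponential decaying on the side where you want convergence. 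The asymmetry between (1) and (2) is genuine: the upper bound is a one-line linear comparison, but the lower bound requires the nonlinear sliding machinery.
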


\subsection*{Stability of the travelling waves.}

We also establish linear and nonlinear stability \eb{of the travelling wave of speed $c\in [c^*, v_{\rm max})$} in some weighted $L^2$ space. The key point is to derive a suitable weight which enables to build a Lyapunov functional for the linear problem. \eb{The weight $\phi(z,v)$ is constructed in a systematic way, following \cite{Bouin-Calvez-Nadin}. However, we believe it is not optimal, as opposed to \cite{Bouin-Calvez-Nadin}, for some technical reason (see Remark \ref{rk:phi not optimal}).}

Let $f$ be a travelling wave \eqref{eqkinwave} of speed \eb{$c \in [c^*, v_{\rm max})$}, and let $u=g-f$ (resp. $u^0=g^0-f$) be  the perturbation of $f$ in the moving frame. 
Neglecting the nonlinear contributions, we are led to investigate the linear equation
\begin{equation}\label{kinmf-intro}
\partial_t u + (v - c) \partial_{ z } u + \left( 1 + r \rho_f \right) u = \left( (1+r) M  - r f   \right)  \rho_u\, .
\end{equation}
\eb{
\begin{thm}[Linear stability]\label{LinStability}

There exists a weight $\phi(z,v)$ such that the travelling front of speed $c\in [c^*, v_{\rm max})$ is linearly stable in the weighted space 
$L^2 \left( e^{- 2 \phi( z ,v)} d z  dv \right)$ in the following sense: if $u^0 \in L^2 \left( e^{- 2 \phi( z ,v)} d z  dv \right)$, then 
\begin{equation*}
(\forall t \in \R^+) \quad \Vert u(t) \Vert_{L^2 \left( e^{- 2 \phi } \right)} \leq \Vert u^0 \Vert_{L^2 \left( e^{- 2 \phi }  \right)}. 
\end{equation*}
Moreover, the perturbation is globally decaying as the dissipation is integrable in time:
\begin{equation*}
(\forall z_0 \in \R)\quad  \int_{\{ z < z_0 \} \times V} \left\vert u(t,z,v) \right\vert^2 e^{- 2 \phi( z ,v)} dzdv + \int_{\{ z > z_0 \} \times V} \rho_f(z) \left\vert u(t,z,v) \right\vert^2 e^{- 2 \phi( z ,v)} dzdv   \in L^1\left( \R^+ \right).
\end{equation*}
\end{thm}
The proposition will appear as a corollary of the following Lyapunov identity, which holds true for any solution $u$ of the linear equation \eqref{kinmf-intro},
\begin{equation}\label{eq:dissipation linear}
\frac{d}{dt} \left( \frac12 \int_{\R \times V} \left\vert u \right\vert^2 e^{- 2 \phi( z ,v)} d  z  dv \right) + \int_{\R \times V} \frac r2 \left(  \rho_f + \frac{ f}{M(v) + r \left( M(v) -  f \right)} \right) \left\vert u \right\vert^2 e^{- 2 \phi( z ,v)} d  z  dv \leq 0\,.
\end{equation}
The weight $\phi$ is given in Definition \ref{eq:def weight phi}. It is  equivalent to $- z$ as $z\to +\infty$, uniformly with respect to $v$.   

The weighted energy estimate \ref{eq:dissipation linear} does not provide any exponential decay, because of the presence of $\rho_f(z)$ in the dissipation. This is a general concern for reaction-diffusion equations, see \cite{Kirchgassner92} and references therein. However, in \cite{Cuesta12} the authors prove such an exponential decay in the case of supercritical speeds $c>2\sqrt{rD}$, and $\e$ small enough (diffusive regime). We do not follow this argument further in this work.}

Then we adapt the method of \cite{Cuesta12}, using a comparison argument together with the explicit formula of the dissipation \eqref{eq:dissipation linear}, in order to prove a nonlinear stability result.

\begin{cor}[Nonlinear stability]\label{NonLinStability}

Under the same conditions as Theorem  \ref{LinStability}, assume in addition that there exists $\gamma \in \left( \frac12 , 1 \right] $ such that
\begin{equation}\label{initialcond}
\left(\forall \left( x , v \right) \in \R \times V\right) \quad g^0(x , v) \geq \gamma  f(x, v)\,.
\end{equation}
Then the same conclusion as in Theorem \ref{LinStability} holds true.
\end{cor}

We expect that nonlinear stability holds true for any $\gamma\in (0,1]$. However this would require to redefine the weight $\phi$, since we believe it is not the optimal one,  see Remark \ref{rk:phi not optimal} below.


\subsection*{Superlinear propagation when velocity is unbounded.}

Boundedness of $V$ is a crucial hypothesis in order to build the travelling waves. 
We believe that it is a necessary and sufficient condition. We make a first step to support this conjecture by investigating the case $V = \R$.
 We first prove infinite speed of spreading of the front under the natural assumption $(\forall v\in \R)\; M(v)>0$. As a corollary there cannot exist travelling wave
in the sense of Definition \ref{def:deftw}. Note that there exist travelling waves with less restrictive conditions than Definition \ref{def:deftw}, 
at least in the diffusive regime \cite{Cuesta12}. These fronts are expected not to verify the nonnegativity condition, as $x\to +\infty$. We believe that such oscillating fronts do exist far from the diffusive regime. 
In the case where $V = \R$ and  $M$ is a Gaussian distribution, we have plotted the  dispersion relation \eqref{eq:dispersion-intro} in the complex plane $\lambda \in \C$, for an arbitrary given $c>0$. We have observed that it selects two complex conjugate roots,  supporting the fact that damped oscillating fronts should exist (results not shown).

\begin{prop} \label{prop:spreadingunbounded}
Assume that $M(v) > 0$ for all $v\in \R$. Let $g^0 \in L^\infty (\R\times V)$ such that $0\leq g^0 (x,v)\leq M(v)$ for all $(x,v) \in \R\times V$
and there exists $x_L$ and $\gamma\in (0,1)$ such that $g^0 (x,v) \geq \gamma M(v)$ for all $x\leq x_L$ and $v\in V$. 
Let $g$ be the solution of the Cauchy 
problem \eqref{eq:kinKPP}. Then 
for all $c>0$,
\begin{equation*}
(\forall v\in V)\quad \lim_{t\to +\infty}\left( \sup_{x\leq ct}  \left\vert M(v) - g(t,x,v) \right\vert\right) =0\, .
\end{equation*}
\end{prop}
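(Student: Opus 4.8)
The plan is to build an explicit subsolution that spreads superlinearly, by exploiting the fact that with $V=\R$ and $M$ positive everywhere, arbitrarily large velocities are available, so the effective "speed" of transport is unbounded. First I would observe that it suffices to treat the half-space evolution: by the comparison principle for \eqref{eq:kinKPP} (monotonicity of the flow, which follows because the reaction term $\rho_g(M-g)+r(M\rho_g-g)$ is quasimonotone and the transport is linear), and by spatial translation invariance, I may assume $g^0(x,v)\geq \delta M(v)$ for all $x\leq 0$. I then want to show that for every $c>0$, the solution becomes close to the equilibrium $M(v)$ uniformly on $\{x\leq ct\}$. The key mechanism is that a small bump of density located at the origin, after time $t$, reaches position $\sim Vt$ for particles of speed $V$; choosing $V$ large (possible since $V=\R$) but with the fraction of such particles bounded below by $\int_{|v|\leq V} M$, one can seed growth arbitrarily far to the right.

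The key steps, in order, would be: (1) Set up a subsolution of the form $\underline{g}(t,x,v)=\mu(t)\,\psi(t,x)\,M(v)$, or more robustly a two-stage argument: first, ignoring the $v\partial_x g$ term is not legitimate, so instead I compare with the solution $h$ of the \emph{linearized} equation $\partial_t h + v\partial_x h = (1+r)(M\rho_h - h) + $ (lower order), which has the monostable property $h\mapsto h$ near $0$. (2) For the linearized equation, use the explicit representation along characteristics: $h(t,x,v)=e^{-(1+r)t}h^0(x-vt,v)+\int_0^t e^{-(1+r)(t-s)}(1+r)M(v)\rho_h(s,x-v(t-s))\,ds$, and derive a closed (integral) inequality for $\rho_h(t,x)=\int_\R h(t,x,v)\,dv$. (3) Feed in $h^0(y,v)\geq \delta M(v)\mathbf{1}_{\{y\leq 0\}}$ and estimate $\rho_h(t,x)$ from below at a point $x=ct$: the free-transport part alone contributes $\delta e^{-(1+r)t}\int_{\{v\,:\,ct-vt\leq 0\}}M(v)\,dv = \delta e^{-(1+r)t}\int_{v\geq c}M(v)\,dv$, which is a \emph{positive constant} times $e^{-(1+r)t}$ for every fixed $c$ — crucially independent of how large $c$ is, because $\int_{v\geq c}M>0$ for all $c$. (4) Then bootstrap: once $\rho_h$ is bounded below by a positive constant on a large spatial interval up to time $T$, the gain term $(1+r)M\rho_h$ pumps mass back in, and iterating (a Duhamel/Gronwall argument) shows $\rho_h(t,x)$ grows and eventually stabilizes near its equilibrium value $1$ on $\{x\leq ct\}$ as $t\to\infty$. (5) Finally, transfer the conclusion from the linearized equation back to \eqref{eq:kinKPP}: since $g$ with initial data bounded below by $\delta M$ dominates the solution of a genuinely sublinear (KPP-type) equation whose linearization at $0$ is exactly the linearized operator above, and since $g\leq M$ from above, a standard squeezing argument forces $g(t,x,v)\to M(v)$ uniformly on $\{x\leq ct\}$.

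The main obstacle is step (4): turning the pointwise-in-$x$, exponentially small but spatially spread lower bound on $\rho_h$ into genuine convergence to the equilibrium on the whole moving half-line $\{x\leq ct\}$, uniformly in $t$. The difficulty is that the free-flight contribution decays like $e^{-(1+r)t}$, so the growth term must more than compensate; one needs a self-improving estimate showing that the set where $\rho_h\geq\eta$ expands at superlinear rate. Concretely I would iterate a lemma of the form: "if $\rho_h(s,\cdot)\geq\eta$ on $(-\infty, a]$ for $s\in[s_0,s_0+\tau]$, then $\rho_h(s_0+\tau,\cdot)\geq\eta'$ on $(-\infty, a+b]$," where $b$ can be taken arbitrarily large at the cost of a (fixed, positive) loss $\eta\to\eta'$ controlled by $\int_{|v|\leq b/\tau}M\,dv$ and the reaction gain — the point being that because velocities are unbounded, $b/\tau$ can be pushed to $\infty$ while keeping the velocity-mass factor bounded below. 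Making this iteration close quantitatively (choosing $\tau$, the number of steps, and tracking that $\eta'$ does not collapse to $0$) is the delicate part; once it is in place, the conclusion for every fixed $c$ is immediate since $ct$ is eventually overtaken by the superlinearly advancing front. A secondary technical point is justifying the comparison between $g$ and the linearized/KPP-type subsolution rigorously, i.e. checking the quasimonotonicity and that the saturation term only helps from below — this is routine but must be stated.
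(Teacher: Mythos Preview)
Your approach is genuinely different from the paper's, and the gap you yourself flag in step~(4) is real and not easily filled. The paper avoids your direct iteration entirely: it truncates the velocity set to $V_A=[-A,A]$, renormalizes $M$ to $M_A$ and adjusts the growth rate to $r_A=(1+r)\int_{-A}^A M-1$, and then checks that a constant multiple $\widetilde g_A=\frac{r_A}{r}g_A$ of the solution $g_A$ of the truncated problem is a subsolution of the original equation \eqref{eq:kinKPP}. Since $V_A$ is bounded, Proposition~\ref{prop:spreadingbounded} applies and gives $\widetilde g_A\to M_A\geq M$ on $\{x\leq ct\}$ for every $c<c^*_A$. The key observation (a short Fatou argument on the dispersion relation) is that the minimal speed $c^*_A$ for the truncated problem diverges as $A\to\infty$; hence for any fixed $c$ one can pick $A$ large enough that $c<c^*_A$, and the conclusion follows by comparison. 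All the delicate work---the compactly supported subsolution and the sliding argument that forces convergence to the equilibrium---has already been done in the bounded-velocity case.

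Your iteration, by contrast, tries to prove the hair-trigger effect directly in the unbounded setting, and it is not clear it closes. To advance the front by $b$ in time $\tau$ you need mass at velocities $\geq b/\tau$, and the free-transport contribution at the new edge is of order $e^{-(1+r)\tau}\eta\int_{v\geq b/\tau}M$; for the iteration not to collapse you need the reaction term to restore the density from this exponentially depressed value back up to at least $\eta$ in one step. Establishing that---i.e.\ that any positive density is eventually pushed up to the equilibrium $M(v)$ uniformly on a moving half-line---is precisely the content of the sliding Lemma~\ref{lem:sliding} in the bounded case, and that lemma relies on the compactly supported stationary subsolution of Lemma~\ref{lem:posHarnack}, which in turn comes from a complex root of the dispersion relation. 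No such subsolution is available when $V=\R$ (indeed there are no travelling waves), so you would have to invent a substitute. Your proposal does not supply one, and the phrase ``$b/\tau$ can be pushed to $\infty$ while keeping the velocity-mass factor bounded below'' is not right as stated: if $b/\tau\to\infty$ then $\int_{v\geq b/\tau}M\to 0$. The paper's truncation trick sidesteps all of this by importing the bounded-velocity machinery wholesale.
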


We can immediately deduce from this result the non-existence of travelling waves when $V=\R$, by taking such a travelling wave as an initial datum $g^0$ 
in order to reach a contradiction.

\begin{cor}
 Assume that $M(v)>0$ for all $v\in \R$. Then equation \eqref{eq:kinKPP} does not admit any travelling wave solution. 
\end{cor}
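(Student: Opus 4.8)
The proof is by contradiction, and rests entirely on Proposition~\ref{prop:spreadingunbounded}: a travelling wave propagates at a finite speed, whereas Proposition~\ref{prop:spreadingunbounded} exhibits invasion at an arbitrarily large speed. The plan is to suppose that $g(t,x,v) = f(x-ct,v)$ is a travelling wave of speed $c \geq 0$ in the sense of Definition~\ref{def:deftw}, to use $g^0 := f$ as an initial datum for the Cauchy problem~\eqref{eq:kinKPP} (so that, by uniqueness of solutions, the associated solution is $g(t,x,v) = f(x-ct,v)$), and to reach a contradiction by inspecting the solution along a ray that travels strictly faster than the wave.

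First I would check that $g^0 = f$ is an admissible initial datum for Proposition~\ref{prop:spreadingunbounded}. The two-sided bound $0 \leq g^0(x,v) \leq M(v)$ is part of Definition~\ref{def:deftw}, and the hypotheses on $M$ are the standing assumptions together with $M(v)>0$, which is assumed in the Corollary. The point that requires care is the lower bound: one needs $x_L \in \R$ and $\delta \in (0,1)$ with $f(x,v) \geq \delta M(v)$ for all $x \leq x_L$ and all $v \in \R$. For each fixed $v \neq c$, solving the linear ordinary differential equation $(v-c)\partial_z f + (1 + r\rho_f) f = (1+r) M(v) \rho_f$ in the variable $z$ gives a Duhamel representation of $f$ in terms of $\rho_f$; since $0 \leq f \leq M \in L^1(V)$, dominated convergence yields $\rho_f(z) \to 1$ as $z \to -\infty$, and inserting this into the representation should produce the required lower bound near $z = -\infty$ (the degenerate velocity $v = c$, where $f(z,c) = (1+r)M(c)\rho_f(z)/(1 + r\rho_f(z))$, being handled directly). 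This is the step I expect to be the main obstacle: the transport coefficient $v - c$ is unbounded over $V = \R$, so the control of $f$ by $M$ degrades a priori for $|v|$ large, and obtaining the bound uniformly in $v$ is exactly where the argument has to be carried out with care.

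Granting that $g^0 = f$ is admissible, Proposition~\ref{prop:spreadingunbounded} gives, for every $c' > 0$ and every $v \in \R$,
\[
\lim_{t \to +\infty}\ \sup_{x \leq c' t}\ |M(v) - g(t,x,v)| = 0 .
\]
I would then take $c' := c + 1$ and evaluate at the admissible point $x = (c+1)t$, at which $g(t,(c+1)t,v) = f(t,v)$; this forces $|M(v) - f(t,v)| \to 0$ as $t \to +\infty$. On the other hand, $f(z,v) \to 0$ as $z \to +\infty$ by Definition~\ref{def:deftw}, so $f(t,v) \to 0$ as $t \to +\infty$. Comparing the two limits gives $M(v) = 0$ for every $v \in \R$, contradicting $M(v) > 0$. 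Hence no travelling wave solution can exist.
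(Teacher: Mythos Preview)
Your argument is exactly the paper's: it presents the corollary as an immediate consequence of Proposition~\ref{prop:spreadingunbounded}, obtained by taking a hypothetical travelling wave as initial datum and reading off a contradiction along a ray of speed larger than $c$. The contradiction you extract at the end is correct.

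The concern you flag about the lower-bound hypothesis is legitimate, and the paper does not address it either. In fact a bound $f(x,v)\geq\delta M(v)$ uniform over all $x\leq x_L$ and all $v\in\R$ need not hold: your Duhamel representation for $v<c$ shows that the transition zone of $f(\cdot,v)/M(v)$ has width of order $c-v$, so for $v\to-\infty$ no fixed $x_L$ can work. The way out is to notice that the \emph{proof} of Proposition~\ref{prop:spreadingunbounded} truncates the velocity set to $[-A,A]$ before invoking Proposition~\ref{prop:spreadingbounded}; hence one only needs, for each fixed $A$, constants $x_L(A)$ and $\delta(A)$ with $f(x,v)\geq\delta(A)M(v)$ on $\{x\leq x_L(A)\}\times[-A,A]$. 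This weaker statement does follow from your ODE analysis (or, more directly, from the strong maximum principle of Proposition~\ref{prop-mp}, which gives $f>0$ everywhere, together with continuity of $f/M$ on compact subsets of $\R\times V$). With this adjustment the argument is complete.
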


\eb{\subsection*{Accelerating fronts for a Gaussian distribution.}

Accelerating fronts in reaction-diffusion equations have raised a lot of interest in the recent years. They occur for the Fisher-KPP equation \eqref{KPP} when the initial datum decays more slowly than any exponential \cite{Hamel-Roques}. They also appear when the diffusion operator is replaced by a nonlocal dispersal operator with fat tails \cite{Kot-Lewis,Medlock-Kot,Garnier}, or by a nonlocal fractional diffusion operator \cite{Cabre-Roquejoffre, Cabre-Roquejoffre2, Coulon-Roquejoffre}. Recently, accelerating fronts have been conjectured to occur in a reaction-diffusion-mutation model which generalizes the Fisher-KPP equation to a population structured with respect to the diffusion coefficient \cite{toads}.

Here, we investigate the case of a Gaussian distribution $M$. The spreading rate  $\langle x\rangle = \mathcal O (t^{3/2} )$ is expected in this case (heuristics, and see \cite{Mendez10}). 
We prove that spreading occurs
with this rate. For this purpose, we build suitable sub-  and 
supersolution which spread with this rate.


We split our results into two parts, respectively the upper bound and the lower bound of the spreading rate. The reason is that the constructions are quite different. The construction of the supersolution relies on a first guess inspired from \cite{Garnier}, plus convolution tricks which are made easier in the gaussian case. On the other hand, the construction of the subsolution is based on a better comprehension of the growth-dispersion process. Again, some technical estimates are facilitated in the gaussian case. We believe that these results can be generalized to a large class of distributions $M$, at the expense of clarity.

\begin{thm}\label{thm:unbounded}
Let $M(v) = \frac1{\sigma \sqrt{2 \pi}} \exp \left( - \frac{v^2}{2 \sigma^2} \right) $, defined for $v\in \R$. Let $g^0 \in L^\infty (\R\times V)$ such that $0\leq g^0 (x,v)\leq M(v)$ for all $(x,v) \in \R\times V$. We have the two following, independent, items,
\begin{enumerate}
\item Assume that there exist $1\leq b \leq a$ such that 
\begin{equation*}
(\forall (x,v)\in \R\times V)\quad g^0 (x,v) \leq \frac{1}{b} M\left( \frac{x}{b} \right) M(v) e^{ra}\,.
\end{equation*}
Let $g$ be the solution of the Cauchy 
problem \eqref{eq:kinKPP}. Then 
for all $\eps > 0$, one has 
\begin{equation*}
\lim_{t \to + \infty}\left(\sup_{ |x|\geq \left( 1 + \eps \right)\sigma \sqrt{2r} t^{3/2}}\rho_g(t, x ) \right) =  0 \,.
\end{equation*} 
\item Assume that there exists $\gamma \in (0,1)$, and $x_L\in \R$ such that
\begin{equation*}
(\forall (x,v)\in \R\times V)\quad g^0 (x,v)  \geq \gamma M(v) {\bf 1}_{x < x_L},
\end{equation*}
Let $g$ be the solution of the Cauchy problem \eqref{eq:kinKPP}. Then for all $\eps > 0$, one has 
\begin{equation*}
\lim_{t \to + \infty}\left(\sup_{ x \leq \left( 1 - \eps \right)\sigma ( \frac{r}{r + 2} t )^{3/2}}\rho_g(t, x ) \right) \geq  1 - \gamma \,.
\end{equation*} 
\end{enumerate}
\end{thm}
}

\begin{rmq}[Front propagation and diffusive limit]
There is some subtlety hidden behind this phenomenon of infinite speed of spreading. In fact the diffusion limit of the scattering equation (namely $r = 0$) towards the heat equation makes no difference between bounded or unbounded velocity sets, as soon as the variance $D$ is finite  (see \cite{Degond-Goudon-Poupaud} and the references therein). However very low densities behave quite differently, which can be measured in the setting of large deviations or WKB limit. This can be observed even in the case of a bounded velocity set. In  \cite{Bouin-Calvez} the large deviation limit of the scattering equation is performed. It differs from the classical eikonal equation obtained from the heat equation. The case of unbounded velocities is even more complicated \cite{Bouin-Calvez-Grenier-Nadin}. To conclude, let us emphasize that low densities are the one that drive the front here (pulled front). So the diffusion limit is irrelevant in the case of unbounded velocities, since very low density of particles having very large speed makes a big contribution. 
\end{rmq}

%
%
%
%
%
%
%
%

\section{Preliminary results}

We first recall some useful results concerning the Cauchy problem associated with \eqref{eq:kinKPP}: well-posedness and a strong maximum principle.
 These statements extend some results given in \cite{Cuesta12}. 
They do not rely on the boundedness of $V$. 


\begin{prop} [Global existence: Theorem 4 in \cite{Cuesta12}] \label{Cauchypbm} 
Let $g^0$ a measurable function such that $0\leq g^0(x,v)\leq M(v)$ for all $(x,v) \in \R \times V$. Then the Cauchy problem  \eqref{eq:kinKPP} has a unique solution $g\in\mathcal{C}^ 0_b( \R_+\times\R\times V)$ in the sense of distributions, satisfying
\begin{equation*}
(\forall  (t,x,v)\in \R_+\times\R\times V)\quad 0\leq g(t,x,v)\leq M(v)  \, .
\end{equation*}
\end{prop}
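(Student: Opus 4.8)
The plan is to recast \eqref{eq:kinKPP} as a fixed point problem along the characteristics of the free transport operator $\partial_t+v\partial_x$ and to solve it by a contraction argument, reading the bounds $0\le g\le M$ directly off the positivity structure of the Duhamel formula. Along the line $s\mapsto x-v(t-s)$ equation \eqref{eq:kinKPP} reads
\[
\frac{d}{ds}\,g\big(s,x-v(t-s),v\big) = -\big(r+\rho_g\big)\,g\big(s,x-v(t-s),v\big) + (1+r)\,M(v)\,\rho_g ,
\]
where $\rho_g$ is evaluated at $(s,x-v(t-s))$, so a solution must be a fixed point of the map $\mathcal{T}$ defined by
\[
\mathcal{T}g(t,x,v) = g^0(x-vt,v)\,e^{-\int_0^t (r+\rho_g)} + (1+r)\,M(v)\int_0^t \rho_g\big(s,x-v(t-s)\big)\,e^{-\int_s^t (r+\rho_g)}\,ds ,
\]
the exponents being taken along the same characteristic and $\rho_g(s,y)=\int_V g(s,y,v')\,dv'$. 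Conversely, any distributional solution in $\mathcal{C}^0_b$ taking values in $[0,M]$ satisfies this formula, so solving $g=\mathcal{T}g$ is equivalent to the statement.

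Since $V$ need not be bounded, I would work not with the plain sup norm but on $X_T=\{g\in\mathcal{C}^0([0,T]\times\R\times V):0\le g\le M\}$ with the weighted norm $\|g\|_*=\sup |g(t,x,v)|/M(v)$; the normalisation $\int_V M=1$ then yields both $0\le\rho_g\le1$ on $X_T$ and the Lipschitz bound $\|\rho_{g_1}-\rho_{g_2}\|_\infty\le\int_V|g_1-g_2|\,dv\le\|g_1-g_2\|_*$. The first step is that $\mathcal{T}$ maps $X_T$ into itself: $\mathcal{T}g\ge0$ is immediate, and for $\mathcal{T}g\le M$ one uses $\rho_g\le1$ in the form $(1+r)\rho_g\le r+\rho_g$, so that $(1+r)\rho_g\,e^{-\int_s^t(r+\rho_g)}\le\frac{d}{ds}e^{-\int_s^t(r+\rho_g)}$; integrating in $s$ gives $\mathcal{T}g\le M(v)\big(e^{-\int_0^t(r+\rho_g)}+1-e^{-\int_0^t(r+\rho_g)}\big)=M(v)$. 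Continuity of $\mathcal{T}g$ follows from continuity of translations together with dominated convergence for $\rho_g$ (dominated by $M\in L^1$). The second step is the contraction estimate: using $|e^{-A}-e^{-B}|\le|A-B|$ for $A,B\ge0$ and the Lipschitz bound on $\rho$, one obtains $\|\mathcal{T}g_1-\mathcal{T}g_2\|_*\le C(r)\,T\,\|g_1-g_2\|_*$, so $\mathcal{T}$ is a contraction on $X_T$ once $T$ is small enough, with $T$ depending only on $r$. Banach's theorem then gives a unique fixed point on $[0,T]$, which is the unique mild — hence distributional — solution and satisfies $0\le g\le M$.

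Because the invariant region $0\le g\le M$ does not shrink with time, the local existence time $T$ is independent of the datum, so iterating the argument on $[0,T]$, $[T,2T]$, $\dots$ produces a global solution $g\in\mathcal{C}^0_b(\R_+\times\R\times V)$, boundedness being inherited from $M$, and a Gronwall estimate for $t\mapsto\|g_1-g_2\|_*(t)$ between two solutions with the same datum yields global uniqueness. This is exactly the scheme of \cite[Theorem 4]{Cuesta12}; the only genuinely new point is the possible unboundedness of $V$, which is what forces the weighted norm $\|\cdot\|_*$ and the systematic use of $\int_V M=1$ in place of $|V|$, and I expect this to be the main — and rather mild — obstacle. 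As a byproduct, the strong maximum principle alluded to just before the statement also follows from the Duhamel formula: if $g^0\not\equiv0$ the second, strictly positive, term forces $g(t,x,v)>0$ for every $t>0$, and symmetrically $g<M$ wherever $g^0\not\equiv M$.
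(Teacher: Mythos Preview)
Your argument is correct and is the same Duhamel--fixed-point scheme that underlies this result. The paper does not actually spell out a proof of this proposition --- the title attributes it to \cite[Theorem~4]{Cuesta12} --- and the Appendix only records the linear variant (Lemma~\ref{lem:eulin}), where the contraction is run in $\mathcal{C}^0_b\big((0,T)\times\R,\,L^1(V)\big)$ for an operator with frozen bounded coefficients $a,b$. You instead keep the nonlinear $\rho_g$ in the exponential and measure the contraction in the weighted norm $\sup|g|/M$; both are legitimate implementations of the same idea, and your weighted norm is indeed the natural device for an unbounded $V$, since it simultaneously gives $\rho_g\le 1$ and the Lipschitz bound $\|\rho_{g_1}-\rho_{g_2}\|_\infty\le\|g_1-g_2\|_*$. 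One small caveat on your closing remark: the strong maximum principle you sketch is too quick when $V$ is bounded --- positivity only propagates inside the cone $|x-x_0|<v_{\max}t$, which is exactly what the paper's Lemma~\ref{lem:posA_T} and Proposition~\ref{prop-mp} make precise.
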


The next result refines the comparison principle of \cite{Cuesta12} in order to extend it to sub and supersolutions 
in the sense of distributions and to state a strong maximum principle. Its proof is given in Appendix. 

\begin{prop}[Comparison principle]\label{prop-mp}
Assume that $u_1,u_2\in\mathcal{C}(\R_+,L^\infty(\R\times V))$ are respectively a super- and a subsolution of \eqref{eq:kinKPP}, {\em i.e.} 
\begin{equation*}\begin{array}{l}
\partial_t g_1+  v \partial_x g_1 \geq  \left( M(v)\rho_{g_1} - g_1 \right) +r  \rho_{g_1} \left( M(v)-g_1 \right)\,,\smallskip\\
\partial_t g_2 +  v\partial_x g_2 \leq  \left( M(v)\rho_{g_2} - g_2 \right) +r  \rho_{g_2} \left( M(v)-g_2 \right) \,,\\
\end{array}
\end{equation*}
in the sense of distributions. Assume in addition that $g_2$ satisfies $g_2(t,x,v)\leq M(v)$ for all $(t,x,v)\in \R_+\times \R\times V$.  Then $g_2(t,x,v)\leq g_1(t,x,v)$ for all $(t,x,v)\in\R_+\times \R\times V$.

Assume in addition that $V$ is an interval, and  that $\inf_V M>0$. If there exists $(x_0,v_0)$ such that $g_2 (0,x_0,v_0)> g_1 (0,x_0,v_0)$, 
then one has $g_1 (t,x,v) > g_2 (t,x,v)$ for all $(t,x,v) \in \R_+ \times \R\times V$ such that $|x-x_0| < v_{\rm max} t$. 
\end{prop}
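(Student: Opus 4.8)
The plan is to prove the Comparison principle (Proposition \ref{prop-mp}) in two stages: first the weak comparison $g_2 \leq g_1$ everywhere, then the strong maximum principle in the cone $|x-x_0| < v_{\rm max} t$. For the first stage I would set $w = g_2 - g_1$ and show $w \leq 0$. The key difficulty is that the reaction-transport operator is nonlinear and nonlocal in $v$ (through $\rho$), so a pointwise maximum principle is not directly available; the standard trick here is to work with $w^+ = \max(w,0)$ and estimate $\frac{d}{dt}\int_{\R\times V} w^+(t,x,v)\,dx\,dv$ (or a suitable exponentially weighted version to control growth at spatial infinity). The transport term $v\partial_x$ integrates to zero (no boundary contribution, using decay or the weight), and the remaining terms must be shown to contribute nonpositively when integrated against $\sign(w)^+$. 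Writing out the difference of the two right-hand sides, one gets terms of the form $M(v)(\rho_{g_2}-\rho_{g_1}) - (g_2-g_1) + r[M(v)\rho_{g_2} - g_2\rho_{g_2}] - r[M(v)\rho_{g_1} - g_1 \rho_{g_1}]$; grouping, the dangerous positive contributions come from $M(v)\rho_w$ and from the bilinear term $r(M(v)\rho_w - (g_2\rho_{g_2}-g_1\rho_{g_1}))$. Using $0 \leq g_2 \leq M(v)$ and $g_1 \geq 0$ one rewrites $g_2\rho_{g_2} - g_1\rho_{g_1} = \rho_{g_2}(g_2-g_1) + g_1(\rho_{g_2}-\rho_{g_1})$ and bounds things so that, after integrating in $v$, the coefficient in front of $\int w^+$ is controlled by a constant (depending on $\|M\|_{L^1}$, $r$, and the $L^\infty$ bounds), yielding a Gronwall inequality $\frac{d}{dt}\int w^+ \leq C\int w^+$ with $\int w^+(0,\cdot) = 0$, hence $w^+ \equiv 0$.

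For the second stage, the strong maximum principle, I would assume $\inf_V M = m > 0$ and $V$ an interval, and suppose $g_2(0,x_0,v_0) > g_1(0,x_0,v_0)$ for some $(x_0,v_0)$. Now consider $h = g_1 - g_2 \geq 0$ (by the first stage applied with roles as stated — note $g_1$ super, $g_2$ sub, so $h\geq 0$). One writes the equation satisfied by $h$ in the form $\partial_t h + v\partial_x h + c(t,x,v) h = \big(M(v) + r(M(v)-g_2)\big)\rho_h + (\text{good sign terms})$, where the source term on the right is $\geq m(1+r\cdot 0)\rho_h \geq 0$ because $g_2 \leq M$. The point is that $h$ propagates positivity along characteristics $X(s) = x + vs$ and, crucially, the $\rho_h$ source couples all velocities: once $\rho_h > 0$ at some space-time point, the source feeds every velocity fiber. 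The mechanism: by Duhamel along characteristics, $h(t,x,v) \geq e^{-\|c\|_\infty t}\,h^0(x-vt,v) + m\int_0^t e^{-\|c\|_\infty(t-s)}\rho_h(s, x-v(t-s))\,ds$. Since $\rho_h(0,\cdot)$ is nonnegative and, by continuity, positive on a neighborhood of $x_0$, we get $\rho_h(s,\cdot)>0$ on an expanding set, and then for any $(t,x,v)$ with $|x-x_0| < v_{\rm max} t$ we can find $v \in V$ (here we use that $V$ is an interval so all intermediate speeds are admissible) such that the characteristic backward from $(t,x)$ reaches the region where $\rho_h$ is already positive — giving $h(t,x,v) > 0$ strictly. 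Then $\rho_h(t,x) > 0$, and re-injecting into the Duhamel formula gives $h(t,x,v) > 0$ for \emph{every} $v\in V$, which is the claim.

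The main obstacle is the second stage: making the characteristic/propagation argument rigorous when $h$ is only a distributional (sub/super)solution in $\mathcal{C}(\R_+,L^\infty)$ rather than classical, so that the Duhamel representation and the pointwise positivity statement need justification — one typically mollifies, or uses that along almost every characteristic the function $s\mapsto h(s, x+vs, v)$ is absolutely continuous (a renormalization / DiPerna–Lions-type argument for the linear transport part with $L^\infty$ coefficients), and then checks that the set where $\rho_h>0$ is open and contains the light cone emanating from $x_0$. Controlling why $|x-x_0|<v_{\rm max}t$ is exactly the right cone — rather than something smaller — requires choosing, for a target point $(t,x)$, an admissible velocity $v$ and an intermediate time $s$ so that the characteristic passes through the already-positive region; the constraint $|x - x_0| < v_{\rm max} t$ is precisely what makes such a choice of $v\in[-v_{\rm max},v_{\rm max}]$ possible. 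The first stage is more routine: the only care needed is the spatial weight to justify integration by parts (killing the transport term) and the bookkeeping to extract the Gronwall constant; I would push the details of both stages to the Appendix as the authors indicate.
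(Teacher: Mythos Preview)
Your second stage (strong positivity in the cone) is essentially identical to the paper's argument: the authors also write the Duhamel formula for $h=g_1-g_2$ along characteristics with coefficients $a=1+r\rho_{g_1}$ and $b=M+r(M-g_2)\geq \inf_V M>0$, first show that $\rho_h(t,x)>0$ whenever $|x-x_0|<v_{\max}t$ (choosing an admissible velocity so that the backward characteristic hits $x_0$), and then re-inject to get $h(t,x,v)>0$ for every $v$. Your identification of the obstacle (regularity of $h$ along characteristics) is fair but is handled in the paper simply by working with the integral formulation from the start.

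Your first stage, however, takes a genuinely different route. You propose an $L^1$-Gronwall on $w^+$; the paper instead writes the linear inequality
\[
\partial_t w + v\partial_x w + (1+r\rho_{g_1})w \;\geq\; \bigl(M+r(M-g_2)\bigr)\rho_w
\]
for $w=g_1-g_2$ (note their decomposition isolates $g_2$ in the coefficient, which is where the hypothesis $g_2\leq M$ is used to get $b\geq 0$), casts the Duhamel formula as $w\geq A_T w$ for a monotone operator $A_T$ that is contractive for small $T$, and iterates: $w\geq A_T^n w\to 0$. This fixed-point argument avoids integration in $x$ entirely, so there is no need for a spatial weight, no integration-by-parts for the transport term, and no issue with unbounded $V$. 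Your Gronwall approach is correct in spirit (on the set $\{w>0\}$ one has $g_1<g_2\leq M$, so the dangerous coefficient $(1+r)M-rg_1$ is indeed nonnegative there, and $\rho_w\leq\rho_{w^+}$ closes the estimate), but the weighted-$L^1$ machinery you invoke is heavier than needed. The paper's iteration is both shorter and more robust; your method has the advantage of being the more familiar parabolic template.
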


\begin{rmq}
 If $V=\R$, then this statement reads as in the parabolic framework: if $g_2\geq g_1$ and $g_2\not\equiv g_1$ at $t=0$, then $g_2 >g_1$ for all $t>0$. In the case $V = [-v_{\rm max},v_{\rm max}]$ we have to take into account finite speed of propagation, obviously.  
\end{rmq}

\section{Existence and construction of travelling wave solutions}\label{existence}

We assume throughout this Section that $V=\supp M$  is compact. 
We construct the travelling waves for $c\in [c^*,v_{\rm max})$. 
The proof is divided into several steps. 
It is based on a sub and supersolutions method.


\subsection{The linearized problem.}

The aim of this first step is to solve the linearized equation of \eqref{eqkinwave} at $+ \infty$, in the regime of low density $f\ll 1$. 
Such an achievement gives information about the speed and the space decreasing rate of a travelling wave solution of the nonlinear problem, 
as for the Fisher-KPP equation. \eb{The linearization of \eqref{eqkinwave} at $f=0$} writes 
\begin{equation} \label{lineq:kinKPP2}
( v - c  )\partial_{x } f = \left( M(v) \rho_f - f \right) + r M(v) \rho_f \, ,
\end{equation}
We seek a solution having exponential decay at $+\infty$. More specifically we separate the variables in our ansatz: $f(x,v) = e^{-\lambda x} F(v)$, 
with $\int_V F(v)dv=1$. The next Proposition gathers the results concerning the linear problem.

\begin{prop}[Existence of a minimal speed for the linearized equation]\label{propdisp} 
There exists a minimal speed $c^*$ such that for all $c \in \left[c^*,v_{\rm max} \right)$, 
there exists $\lambda> 0$ such that $f_{\lambda}(x,v) = e^{-\lambda x} F_{\lambda}(v)$ is a nonnegative solution of \eqref{lineq:kinKPP2}. The profile $F_{\lambda}$ is explicitely given by
\begin{equation*}
F_{\lambda} (v)= \frac{(1+r)M(v)}{1 + \lambda ( c - v )} \geq 0\,.
\end{equation*}
The admissible $(\lambda,c)$ are solutions of the following dispersion relation,
\begin{equation}\label{eq:dispersion}
 \int_V \frac{(1 + r)M(v)}{1 +\lambda(c - v)}\, dv = 1\, .
\end{equation}
Moreover, among all possible $\lambda$ for a given $c$, the minimal one $\lambda_c$ is well defined and isolated.
\end{prop}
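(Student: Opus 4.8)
The plan is to reduce the construction of separated solutions to a single scalar dispersion relation, and then to analyse that relation by exploiting convexity in $\lambda$ and monotonicity in $c$. First I would insert the ansatz $f(x,v)=e^{-\lambda x}F_\lambda(v)$ with $\int_V F_\lambda\,dv=1$ into \eqref{lineq:kinKPP2}. Then $\rho_f(x)=e^{-\lambda x}$, and dividing by $e^{-\lambda x}$ leaves the pointwise identity $(1+\lambda(c-v))F_\lambda(v)=(1+r)M(v)$, which gives directly the stated formula $F_\lambda(v)=(1+r)M(v)/(1+\lambda(c-v))$. Since $V=\supp M$, for $F_\lambda$ to be nonnegative and finite one needs $1+\lambda(c-v)>0$ for every $v\in V$, i.e.\ $\lambda\in[0,\Lambda(c))$ with $\Lambda(c):=(v_{\rm max}-c)^{-1}$, which also forces $c<v_{\rm max}$. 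It only remains to impose the normalisation $\int_V F_\lambda\,dv=1$, which is precisely \eqref{eq:dispersion}; setting $\Phi_c(\lambda):=(1+r)\int_V M(v)(1+\lambda(c-v))^{-1}\,dv$, the question becomes: for which $c$ does $\Phi_c(\lambda)=1$ admit a root $\lambda\in[0,\Lambda(c))$?

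I would then study $\Phi_c$ in the variable $\lambda$ on $[0,\Lambda(c))$. It is smooth there and strictly convex, since $\Phi_c''(\lambda)=2(1+r)\int_V (c-v)^2 M(v)(1+\lambda(c-v))^{-3}\,dv>0$ (the denominator stays positive on the admissible range). Moreover $\Phi_c(0)=1+r>1$ and, using the moment conditions \eqref{eq:hypM}, $\Phi_c'(0)=-(1+r)\int_V(c-v)M(v)\,dv=-(1+r)c<0$. Thus $\Phi_c$ starts strictly above $1$ with negative slope; by strict convexity the level set $\{\Phi_c=1\}$ has at most two points, and whenever it is nonempty its smallest element $\lambda_c$ lies in the region where $\Phi_c$ is strictly decreasing, so $\Phi_c'(\lambda_c)<0$. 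This settles the last sentence of the statement: $\lambda_c$ is well defined (minimum of a set of at most two points) and isolated (implicit function theorem, since $\partial_\lambda\Phi_c(\lambda_c)\neq 0$). Write $m(c):=\inf_{[0,\Lambda(c))}\Phi_c$; then $\{\Phi_c=1\}\neq\emptyset$ as soon as $m(c)<1$, while $m(c)>1$ rules out any solution.

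To pin down the threshold I would use monotonicity in $c$: for fixed $\lambda$, $\partial_c\Phi_c(\lambda)=-\lambda(1+r)\int_V M(v)(1+\lambda(c-v))^{-2}\,dv<0$, and $[0,\Lambda(c))$ expands with $c$, so $c\mapsto m(c)$ is continuous and strictly decreasing on $(0,v_{\rm max})$. At the left end, the symmetry $M(-v)=M(v)$ gives $\Phi_0(\lambda)=(1+r)\int_V M(v)(1-\lambda^2 v^2)^{-1}\,dv\geq 1+r$, hence $m(0)=1+r>1$. At the right end, choosing $\lambda_0$ large enough that $(1+r)\int_V M(v)(1+\lambda_0(v_{\rm max}-v))^{-1}\,dv<1$ (possible by dominated convergence) gives $\lambda_0\in[0,\Lambda(c))$ and $\Phi_c(\lambda_0)<1$ for $c$ near $v_{\rm max}$, hence $m(c)<1$. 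Therefore there is a unique $c^*\in(0,v_{\rm max})$ with $m(c^*)=1$, and $m(c)<1$ exactly for $c\in(c^*,v_{\rm max})$. For such $c$ one has $m(c)<1<1+r=\Phi_c(0)$, so the intermediate value theorem produces a root; the smallest one $\lambda_c$ is admissible, positive, and by the previous step well defined and isolated, while $F_{\lambda_c}\geq 0$ is given by the formula and is continuous since $M\in\mathcal{C}^0(V)$ and the denominator is bounded below by a positive constant on the compact $V$. At $c=c^*$ one concludes similarly using that $\Phi_{c^*}$ attains its minimum, equal to $1$.

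The point I expect to be delicate is the behaviour of $\Phi_c$ as $\lambda\uparrow\Lambda(c)$, equivalently the integrability of $v\mapsto M(v)/(v_{\rm max}-v)$ near $v_{\rm max}$: this decides whether $\Phi_c$ blows up at the right endpoint, and hence whether the infimum $m(c)$ is attained — precisely what is needed to cover the borderline speed $c=c^*$ (and it is automatic, for instance, when $M(\pm v_{\rm max})>0$). Everything else is the soft convexity-and-monotonicity structure above, which is the real engine of the proof; the explicit evaluations of $\Phi_c(0)$, $\Phi_c'(0)$ and $\Phi_c''$ are routine once the separated ansatz is in place.
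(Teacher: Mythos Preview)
Your approach is essentially the paper's: derive the profile and dispersion relation from the separated ansatz, then exploit strict convexity of $\lambda\mapsto \Phi_c(\lambda)$ together with monotonicity in $c$ to locate the threshold speed via the function $m(c)=\inf_\lambda \Phi_c(\lambda)$. The key computations ($\Phi_c(0)=1+r$, strict convexity, $\partial_c\Phi_c<0$, the limit at $c=v_{\rm max}$) coincide with the paper's Steps~1--2.

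The point you yourself flag at the end is a genuine gap, and it is precisely what the paper's Step~3 addresses. When $M(v)/(v_{\rm max}-v)\in L^1(V)$ (e.g.\ $M$ vanishes at $v_{\rm max}$), your $\Phi_c$ need not blow up at $\lambda\uparrow\Lambda(c)$, so the infimum $m(c^*)=1$ need not be attained on the open interval, and your sentence ``one concludes similarly using that $\Phi_{c^*}$ attains its minimum'' is unjustified in that generality. The paper closes this by approximation: setting $M_n=(M+1/n)/(1+|V|/n)$ forces $M_n(v_{\rm max})>0$, hence $M_n(v)/(v_{\rm max}-v)\notin L^1(V)$ and $I_n(\lambda;c)\to+\infty$ at the boundary, so your argument applies verbatim to produce $c_n^*$. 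One then sets $c^*=\limsup_n c_n^*$ and passes to the limit: for $c>c^*$, extract a subsequential limit $\lambda_\infty$ of roots of $I_n(\cdot;c)=1$, use Fatou to get $I(\lambda_\infty;c)\leq 1$, and conclude by the intermediate value theorem; for $c=c^*$, take $c_k\downarrow c^*$ and pass to the limit in the corresponding roots $\lambda_k$. One minor correction: your claim $\Phi_c'(\lambda_c)<0$ is valid only for $c>c^*$; at $c=c^*$ the level $1$ is tangent to the graph, so $\Phi_{c^*}'(\lambda_{c^*})=0$ --- the paper records exactly this distinction in the Lemma immediately following the Proposition, where the inequality $I(\lambda_c+\gamma;c)<1$ is asserted only for $c\in(c^*,v_{\rm max})$.
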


\begin{rmq}
Here appears the crucial assumption  on the boundedness of $V$. If this condition is not fulfilled, it is never possible to ensure that the profile $F_{\lambda}$ is nonnegative since the denominator is linear with respect to $v$.
\end{rmq}

\begin{proof}[\bf Proof of Proposition \ref{propdisp}.] 
\textbf{\# Step 1.} 
Plugging the ansatz $f_{\lambda}(x,v) = e^{-\lambda x}F_{\lambda}(v)$  into \eqref{lineq:kinKPP2}  yields
\begin{equation} \label{eq:TW}
(c-v)\lambda F_{\lambda}(v) = \left( M(v)  - F_{\lambda}(v) \right) + r M(v) \, .
\end{equation}
The profile is given by
\begin{equation*}
F_\lambda (v)= \frac{(1+r)M(v)}{1 + \lambda  ( c - v )}.
\end{equation*}
The dispersion relation reads $\int_V F_\lambda(v)dv=1$, or equivalently
\eqref{eq:dispersion}.
Moreover, we require the profile $F_\lambda$ to be nonnegative, which gives the condition $1+\lambda (c-v)>0 $ for all $v\in V$, which implies $\lambda < \frac{1}{v_{\rm max}-c}$. 

From now on, we focus on the existence of solutions $(\lambda,c)$ of \eqref{eq:dispersion}, with $c\in [0,v_{\rm max})$ and $\lambda \in \left[ 0, \frac{1}{v_{\rm max}-c}\right)$. Let us denote 
\begin{equation} \label{eq-defI} 
I(\lambda;c) =  \int_V \frac{(1 + r)M(v)}{1 + \lambda(c - v)}\, dv.
\end{equation}
so that we look for solutions of $I(\lambda;c)= 1$. 

\medskip

\noindent{\bf \# Step 2.} Technically speaking, for all $c\in [0,v_{\rm max})$, the function $ \lambda \mapsto I(\lambda;c)$ is analytic over $\left[ 0, \frac{1}{v_{\rm max}-c}\right)$
Indeed, as $v\mapsto v^n M(v)$ is integrable for all $n$, it is clear that 
\begin{equation*}
I(\lambda;c)=\sum_{n\geq 0} (1+r)\lambda^n\int_V M(v) (v-c)^n dv
\end{equation*}
is the analytic development of $I$ for  $\lambda \in\left[ 0, \frac{1}{v_{\rm max}-c}\right)$. 
Next we observe that $c\mapsto I(\lambda;c)$ is decreasing for all $\lambda \in \left( 0, \frac{1}{v_{\rm max}-c}\right)$, and that $\lambda\mapsto I(\lambda;c)$ is strictly convex.
Moreover, the function $I$ satisfies the following properties:
\begin{align*}
&I(0;c) = 1+ r>1\,,\\
& I(\lambda;0) = (1 + r)\int_V \dfrac{M(v)}{1 - \lambda v}\, dv>1 , \quad \hbox{for all }   \lambda \in\left[ 0, \frac{1}{v_{\rm max}}\right)\\
& I \left(\lambda; v_{\rm max}\right) = (1 + r) \int_V \dfrac{M(v)}{1 + \lambda(v_{\rm max} - v)}\, dv \, \xrightarrow[\lambda\to +\infty]{} 0 \, .
\end{align*}
The last property relies on the Lebesgue's dominated convergence theorem since  $M\in L^1(V)$.

\medskip

\noindent{\bf \# Step 3.} Assume first that $ \frac{M(v)}{v_{\rm max}-v} \not\in L^1(V)$. Then Fatou's lemma gives
$$\liminf_{\lambda \nearrow \frac{1}{v_{\rm max}- c} } I(\lambda;c) = \liminf_{\lambda \nearrow \frac{1}{v_{\rm max}- c} } \int_V \frac{M(v)}{ 1 + \lambda (c-v)}dv \geq 
\int_V \liminf_{\lambda \nearrow \frac{1}{v_{\rm max}- c} } \frac{M(v)}{ 1 + \lambda (c-v)}dv =\int_V \frac{M(v)}{ 1 - \frac{v-c}{v_{\rm max}- c}}dv=+\infty.$$
As a consequence, 
$\theta(c)=\min\left\{ I(\lambda;c):  \lambda\in \left[0, \frac{1}{v_{\rm max}- c}\right)\right\}$ is well defined and finite for all $c\in [0,v_{\rm max})$. 
It follows from the earlier properties that $\theta(0)>1$ and $\theta(v_{\rm max})= 0$. Moreover, the regularity and monotonicity 
properties of $I$ guarantee that $\theta$ is continuous and decreasing. 
Hence, there exists $c^*$ such that $\theta(c^*)=1$ and there exists $\lambda_{c^*}$ such that $I(\lambda_{c^*};c^*)=1$. 

Next, for all $c\in (c^*,v_{\rm max})$, as $c\mapsto I(\lambda;c)$  is decreasing, one has 
$I(\lambda_{c^*};c)<1$ for all $c>c^*$. Thus, as $I(0;c)>1$, there exists $\lambda$ such that $I(\lambda;c)=1$ for all $c>c^*$.

Second, consider a general $M\in \mathcal{C}^0(V)$ possibly vanishing at $v = v_{\max}$. To recover the first step, we define for $n \in \N^*$ a new distribution $M_n = \frac{M+1/n}{1+|V|/n}$ over $V$ (and $0$ outside of $V$), where $|V|$ is the  measure of $V$. 
Then $ \frac{M_n(v)}{v_{\rm max}-v} \not\in L^1(V)$ since $M_n (v_{\rm max}) \geq \frac{1/n}{1+|V|/n}>0$, and thus the earlier step yields that there exists a sequence $c_n^*$ of minimal speeds
associated with $(M_n)_n$. We also associate $I_n$ with $M_n$ through (\ref{eq-defI}). We define 
\begin{equation*}
c^*= \limsup_{n\to \infty} c_n^*,
\end{equation*}
and we now show that it is the minimal speed.


\begin{itemize}
\item Take $c<c^*$. Then for all $\lambda\in \left(0, \frac{1}{v_{\rm max}-c}\right)$ and for some arbitrarily large $n$ so that $\lambda\in \left(0, \frac{1}{v_{\rm max}- c^*_n}\right)$, one has 
\begin{equation*} 
I_n(\lambda; c) = I_n(\lambda; c^*_n)-\int_c^{c^*_n}\partial_c I_n (\lambda,c')dc' \geq 1-\int_c^{c^*_n}\partial_c I_n (\lambda,c')dc'  \geq
1+\frac{(1 + r) \lambda }{\left(1 + \lambda( c^*_n + v_{\rm max})\right)^2}(c^*_n-c).
\end{equation*}
Because $I_n(\lambda;c) \underset{n \to + \infty}{\to} I(\lambda;c)$ as $n\to +\infty$, we get 
\begin{equation*}
I(\lambda;c)\geq 1 +\frac{(1 + r) \lambda}{1 + \lambda( c^* + v_{\rm max})}(c^*-c)>1.
\end{equation*}
Thus $I(\lambda;c)=1$ has no solution for $\lambda\in \left(0, \frac{1}{v_{\rm max}- c}\right)$ if $c<c^*$. 

\item Assume that $c> c^*$. Then one has $c>c_n^*$ when $n$ is large enough and thus for all $n$ sufficiently large, there exists 
$\lambda_n\in \left(0, \frac{1}{v_{\rm max}- c}\right)$ such that $I_n (\lambda_n;c)=1$. Up to extraction, one may assume that $(\lambda_n)_n$ converges to some 
$\lambda_\infty\in \left[0, \frac{1}{v_{\rm max}- c}\right]$. Fatou's lemma yields $I(\lambda_\infty;c)\leq 1$. Hence, there exists a solution $\lambda \in \left[0, \frac{1}{v_{\rm max}- c}\right]$
of $I(\lambda;c)=1$ and obviously $\lambda\neq 0$ since $I(0;c)>1$. 
\item Lastly, if $c=c^*$, we know that for all $k\in \N^*$, there exists $\lambda_k\in \left(0, \frac{1}{v_{\rm max}- (c^*+1/k))}\right]$ such that $I(\lambda_k; c^*+1/k)=1$. 
Assuming that $\lambda_k\to \lambda\in \left[ 0,\frac{1}{v_{\rm max}- c} \right]$ as $k\to +\infty$, we get $I(\lambda;c^*)=1$.
\end{itemize}
\end{proof}

\begin{lem}[Spatial decay rate]\label{minlambda}
For all $c\in \left[c^*,v_{\rm max} \right)$, the quantity 
\begin{equation*}
\lambda_c = \min \{ f>0: I( f;c)=1\}.
\end{equation*}
is well-defined. Moreover, for all $c \in \left( c^* , v_{\rm max} \right)$, if $\gamma>0$ is small enough, then $I(\lambda_c+\gamma;c)<1$.
\end{lem}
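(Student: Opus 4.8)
\emph{Well-posedness of $\lambda_c$.} The plan is to read everything off from two structural facts about $I(\,\cdot\,;c)$ already isolated in the proof of Proposition~\ref{propdisp}. First, for each $c\in[c^*,v_{\rm max})$ the map $\lambda\mapsto I(\lambda;c)$ is analytic and strictly convex on its domain $\left[0,\tfrac1{v_{\rm max}-c}\right)$, with $I(0;c)=1+r>1$. Second, $c\mapsto I(\lambda;c)$ is strictly decreasing for every admissible $\lambda>0$, since
\[
\partial_c I(\lambda;c)=-(1+r)\,\lambda\int_V \frac{M(v)}{\bigl(1+\lambda(c-v)\bigr)^2}\,dv<0 .
\]
Proposition~\ref{propdisp} provides at least one root $\lambda>0$ of $I(\,\cdot\,;c)=1$; strict convexity forces the root set $S_c=\{f>0:I(f;c)=1\}$ to contain at most two points, and since $I(0;c)=1+r\neq 1$, continuity keeps $\inf S_c$ bounded away from $0$. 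Hence $\lambda_c=\min S_c$ is well-defined for all $c\in[c^*,v_{\rm max})$ and satisfies $I(\lambda_c;c)=1$.

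\emph{Strict decay beyond $\lambda_c$ when $c>c^*$.} Fix $c\in(c^*,v_{\rm max})$ and pick any root $\lambda_{c^*}>0$ of $I(\,\cdot\,;c^*)=1$ supplied by Proposition~\ref{propdisp}. The first step is to check that $\lambda_{c^*}$ stays admissible along the whole segment $[c^*,c]$: indeed $\lambda_{c^*}<\tfrac1{v_{\rm max}-c^*}\le\tfrac1{v_{\rm max}-c''}$ for every $c''\in[c^*,c]$, and $1+\lambda_{c^*}(c''-v)\ge 1-\lambda_{c^*}(v_{\rm max}-c^*)>0$, so the integrands above are uniformly dominated and differentiation under the integral sign is licit. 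Then, integrating $\partial_c I$ and using its strict negativity,
\[
I(\lambda_{c^*};c)=I(\lambda_{c^*};c^*)+\int_{c^*}^{c}\partial_c I(\lambda_{c^*};c'')\,dc''<1 .
\]
Next, since $I(0;c)=1+r>1>I(\lambda_{c^*};c)$, the intermediate value theorem yields a root of $I(\,\cdot\,;c)=1$ strictly between $0$ and $\lambda_{c^*}$, so $0<\lambda_c<\lambda_{c^*}$. Finally, convexity and differentiability of $\lambda\mapsto I(\lambda;c)$ bound its derivative at $\lambda_c$ by the corresponding secant slope,
\[
\partial_\lambda I(\lambda_c;c)\ \le\ \frac{I(\lambda_{c^*};c)-I(\lambda_c;c)}{\lambda_{c^*}-\lambda_c}\ =\ \frac{I(\lambda_{c^*};c)-1}{\lambda_{c^*}-\lambda_c}\ <\ 0 ,
\]
so $I(\,\cdot\,;c)$ is strictly decreasing at $\lambda_c$, and therefore $I(\lambda_c+\gamma;c)<I(\lambda_c;c)=1$ for all sufficiently small $\gamma>0$.

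\emph{Where the difficulty lies.} There is no deep obstacle: the only genuine care needed is the domain bookkeeping — verifying that $\lambda_{c^*}$ remains in the admissible range on $[c^*,c]$ so that the fundamental theorem of calculus and the dominated convergence arguments apply — which is immediate from compactness of $V$ and the uniform lower bound on $1+\lambda_{c^*}(c''-v)$. The one point worth emphasizing is that the hypothesis $c>c^*$ is essential and is precisely what upgrades the bound to the strict inequality $I(\lambda_{c^*};c)<1$: at $c=c^*$ the minimal root $\lambda_{c^*}$ is the minimizer of the convex function $I(\,\cdot\,;c^*)$, where $\partial_\lambda I$ vanishes, so the conclusion genuinely fails there.
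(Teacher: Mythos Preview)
Your proof is correct and follows essentially the same route as the paper's: both deduce $I(\lambda_{c^*};c)<1$ for $c>c^*$ from the strict monotonicity in $c$, then use $I(0;c)>1$ to locate $\lambda_c<\lambda_{c^*}$, and finally invoke strict convexity of $\lambda\mapsto I(\lambda;c)$ to get $I(\lambda_c+\gamma;c)<1$. The only cosmetic difference is in the well-definedness step, where you exploit strict convexity (at most two roots, hence a finite nonempty set) while the paper passes to the limit along a minimizing sequence via dominated convergence; both arguments are valid and equally short.
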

%
%

\begin{proof}[\bf Proof of Lemma \ref{minlambda}.]
We know from the definition of $c^*$ that for all $c\in [c^*,v_{\rm max})$, the set $ \Lambda_c = \left \lbrace  f>0: I( f;c)=1 \right \rbrace$ is not empty. 
Thus, we can take a minimizing sequence $\lambda_n$ which converges towards the infimum of $\Lambda_c$. As this sequence is bounded, one can assume, up to extraction, 
that $\lambda_n\to\lambda_c \geq 0$. Then  Lebesgue's dominated convergence theorem gives $I(\lambda_c;c)=1$. Hence 
$\lambda_c = \min \Lambda_c$. 

Next, we have already noticed in the proof of Proposition \ref{propdisp} that $I(\lambda_{c^*},c)<1$ for all $c>c^*$. As 
$I(0,c)=1+r>1$, the definition of $\lambda_c$ yields $\lambda_c <\lambda_{c^*}$. The conclusion follows from the strict convexity of 
the function $\lambda\mapsto I(\lambda;c)$. 
\end{proof}


\subsection{Construction of sub and supersolutions when $c\in (c^*, v_{\rm max})$.}

In this step we construct sub and supersolutions for \eqref{eq:kinKPP}. We fix $c\in \left( c^*,v_{\rm max}  \right)$ and we 
denote $\lambda=\lambda_c$ \eb{for legibility}.

\begin{lem}[Supersolution] \label{lem-super}
Let 
\begin{equation*}
\overline{f}(x ,v)= \min \left \lbrace M(v),e^{-\lambda x }F_\lambda (v) \right \rbrace.
\end{equation*} 
Then $\overline{f}$ is a supersolution of \eqref{eqkinwave}, that is, it satisfies in the sense of distributions:
\begin{equation} \label{eq-super} 
 (v-c) \partial_{x } \overline{f} \geq \left( M(v)\rho_{\overline{f}} - \overline{f} \right) +r \rho_{\overline{f}} \left( M(v)-\overline{f} \right), \quad (x ,v) \in \R \times V.
\end{equation}
\end{lem}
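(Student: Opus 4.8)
The plan is to verify the supersolution inequality \eqref{eq-super} separately in the two regions where $\overline f$ coincides with one of the two smooth functions $M(v)$ and $e^{-\lambda x}F_\lambda(v)$, and then to check that the inequality is preserved across the interface where the minimum switches. First I would observe that $\overline f$ is continuous and piecewise smooth, so the distributional derivative $(v-c)\partial_x\overline f$ is an $L^1_{\mathrm{loc}}$ function away from the (Lipschitz) interface, plus a non-negative measure supported on the interface whose sign we control because $\overline f=\min\{\cdot,\cdot\}$: indeed, a minimum of two smooth functions is a supersolution of a transport equation as soon as each of the two functions is, provided the transport field acts with a favourable sign — this is the classical fact that the pointwise minimum of supersolutions is a supersolution. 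Here the subtlety is that the coefficient $(v-c)$ changes sign with $v$, so I would phrase the interface argument carefully in terms of the direction of the jump of $\partial_x\overline f$ along the characteristic $x\mapsto (v-c)$; since for fixed $v$ the map $x\mapsto\overline f(x,v)$ is the minimum of a constant and a decreasing exponential, it is non-increasing, and at the unique switching point $x_v$ the slope jumps downward (from $0$ to negative as $x$ increases through $x_v$ if $v-c<0$, giving the correct sign for a supersolution).

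Next, in the region $\overline f(x,v)=e^{-\lambda x}F_\lambda(v)$, I would use that $f_\lambda=e^{-\lambda x}F_\lambda$ solves the \emph{linearized} equation \eqref{lineq:kinKPP2} exactly, i.e.
\[
(v-c)\partial_x f_\lambda=\big(M(v)\rho_{f_\lambda}-f_\lambda\big)+rM(v)\rho_{f_\lambda},
\]
with $\rho_{f_\lambda}=e^{-\lambda x}$ by the normalization $\int_V F_\lambda=1$ coming from the dispersion relation $I(\lambda;c)=1$. The only gap between this identity and \eqref{eq-super} is the nonlinear term: the full equation has $+r\rho_{\overline f}(M(v)-\overline f)$ in place of $+rM(v)\rho_{\overline f}$, so I must check that $-r\rho_{\overline f}\,\overline f\le 0$ contributes with the right sign, i.e. that dropping the nonlinear correction only helps the supersolution inequality. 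Since $\overline f\ge 0$, $\rho_{\overline f}\ge 0$, and $r>0$, the term $-r\rho_{\overline f}\overline f$ is non-positive, hence
\[
\big(M\rho_{\overline f}-\overline f\big)+r\rho_{\overline f}(M-\overline f)\le \big(M\rho_{\overline f}-\overline f\big)+rM\rho_{\overline f},
\]
but there is a catch: here $\rho_{\overline f}\le \rho_{f_\lambda}=e^{-\lambda x}$ only when $\overline f\le f_\lambda$ everywhere in $v$, which need not hold at a fixed $x$ because the switch to the cap $M(v)$ happens at a $v$-dependent location. So I would instead compare termwise: write the right-hand side of \eqref{eq-super} using $\overline f\le f_\lambda$ and $\overline f\le M$ pointwise, and $\rho_{\overline f}\le\rho_{f_\lambda}$, keeping careful track of signs of the coefficients $M(v)$ (positive) multiplying $\rho_{\overline f}$; since the source terms involving $\rho_{\overline f}$ come with the positive weight $(1+r)M(v)$, replacing $\rho_{\overline f}$ by the larger $\rho_{f_\lambda}$ can only \emph{increase} the right-hand side, which goes the wrong way. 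This is the real obstacle, so I would handle it by exploiting that $(v-c)\partial_x\overline f=(v-c)\partial_x f_\lambda$ on this region regardless, and then bounding: in the exponential region,
\[
(v-c)\partial_x\overline f-\big[(M\rho_{\overline f}-\overline f)+r\rho_{\overline f}(M-\overline f)\big]
=(1+r)M(v)\big(\rho_{f_\lambda}-\rho_{\overline f}\big)+r\rho_{\overline f}\overline f\ge 0,
\]
using $\rho_{\overline f}\le\rho_{f_\lambda}$ (true at every fixed $x$ since $\overline f\le f_\lambda$ for all $v$) and all other factors non-negative. So in fact the sign does work out, and the earlier worry dissolves once one remembers $\overline f\le f_\lambda$ pointwise in $v$ implies $\rho_{\overline f}\le\rho_{f_\lambda}$.

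In the region $\overline f(x,v)=M(v)$, the transport term vanishes: $(v-c)\partial_x\overline f=0$. So \eqref{eq-super} reduces to
\[
0\ge \big(M(v)\rho_{\overline f}-M(v)\big)+r\rho_{\overline f}\big(M(v)-M(v)\big)=M(v)\big(\rho_{\overline f}-1\big),
\]
which holds because $\rho_{\overline f}=\int_V\overline f\,dv\le\int_V M(v)\,dv=1$ and $M(v)\ge 0$. Finally I would assemble the three pieces: the distributional identity for $(v-c)\partial_x\overline f$ splits as the a.e. classical derivative (handled region by region above) plus a singular part on the interface $\{x=x_v\}$; because $x\mapsto\overline f(x,v)$ is non-increasing and the jump of its derivative is of the correct one sign along the transport direction (the cap lies \emph{below} where the exponential would continue, so the graph bends downward), the singular part contributes with the sign favourable to \eqref{eq-super}. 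Putting the pointwise inequalities together with the correctly-signed interface contribution gives \eqref{eq-super} in the sense of distributions, completing the proof; the main technical point to write out carefully is precisely this interface/jump bookkeeping, i.e. justifying that $\min$ of two supersolutions of the transport-type operator is again a supersolution when the relevant monotonicity of $\overline f$ in $x$ holds.
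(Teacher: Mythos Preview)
Your proof is correct and follows the same route as the paper's: each of $M(v)$ and $e^{-\lambda x}F_\lambda(v)$ is individually a supersolution of \eqref{eqkinwave} (the first is an exact steady state; the second because the only discrepancy with the linearized equation is the term $r\rho\,\overline f\ge 0$), and the minimum of two continuous supersolutions is again a supersolution. The paper dispatches this in two sentences.

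The one confusion in your write-up is the talk of a ``singular part'' of $(v-c)\partial_x\overline f$ at the interface. There is none: $\overline f$ is the minimum of two continuous functions, hence continuous, and for each fixed $v$ it is piecewise $C^1$ in $x$ with the two branches matching at the switching point $x_v$; so $\overline f(\cdot,v)$ is Lipschitz and its distributional $x$-derivative is an $L^\infty$ function with no Dirac contribution. Consequently the sign of $v-c$ plays no role, and \eqref{eq-super} only needs to be checked almost everywhere --- which your two region-by-region computations already do. The entire interface/jump discussion can be deleted.
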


\begin{lem}[Subsolution] \label{lem-sub}
There exist $A>0$ and $\gamma>0$ such that if 
\begin{equation*}
\underline{f}(x ,v)= \max \left\{0,e^{-\lambda x }F_\lambda (v)-A e^{-(\lambda+\gamma) x }F_{\lambda +\gamma}(v)  \right\}\,,
\end{equation*} 
then $\underline{f}$ is a subsolution of \eqref{eqkinwave}, that is satisfies in the sense of distributions:
\begin{equation} \label{eq-subper}
 (v-c) \partial_{x } \underline{f} \leq \left(M(v)\rho_{\underline{f}} - \underline{f}\right) + r\rho_{\underline{f}} \left( M(v)-\underline{f} \right), \quad (x ,v) \in \R \times V.
\end{equation}
\end{lem}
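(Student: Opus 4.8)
The plan is to take $\underline f=\max\{0,w\}$ with $w:=f_\lambda-A\,f_{\lambda+\gamma}$, where $f_\mu(x,v):=e^{-\mu x}F_\mu(v)$ and $F_\mu(v):=\frac{(1+r)M(v)}{1+\mu(c-v)}$, and to choose $\gamma>0$ small and $A>0$ large. Introducing the linear operator $L_{\rm lin}[g]:=(v-c)\partial_x g-\big(M\rho_g-g+rM\rho_g\big)$ of \eqref{lineq:kinKPP2}, a direct computation using the profile identity $F_\mu(v)\big(1+\mu(c-v)\big)=(1+r)M(v)$ and $\rho_{f_\mu}=e^{-\mu x}I(\mu;c)$ gives $L_{\rm lin}[f_\mu]=(1+r)M(v)\,e^{-\mu x}\big(1-I(\mu;c)\big)$. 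Hence $L_{\rm lin}[f_\lambda]=0$ since $I(\lambda;c)=1$, while Lemma \ref{minlambda} lets us fix $\gamma>0$ so small that $\gamma<\lambda$, that $\lambda+\gamma<(v_{\rm max}-c)^{-1}$, and that $\delta_\gamma:=1-I(\lambda+\gamma;c)>0$; then $\Psi(x,v):=L_{\rm lin}[f_{\lambda+\gamma}]=(1+r)\delta_\gamma M(v)e^{-(\lambda+\gamma)x}\ge 0$, and by linearity $L_{\rm lin}[w]=-A\Psi$.

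Next I would reduce the nonlinear subsolution inequality \eqref{eq-subper} to a pointwise bound. The full nonlinear defect is $N[g]:=(v-c)\partial_x g-\big(M\rho_g-g+r\rho_g(M-g)\big)=L_{\rm lin}[g]+r\rho_g g$, so it suffices to check $N[\underline f]\le 0$ a.e.\ and argue that the distributional inequality follows. On the open set $\{w<0\}$ one has $\underline f\equiv 0$ locally, so $N[\underline f]=-(1+r)M\rho_{\underline f}\le 0$. On the open set $\{w>0\}$ one has $\underline f=w$ and $\partial_x\underline f=\partial_x w$ there, while $\underline f=\max\{0,w\}\ge w$ pointwise forces $\rho_{\underline f}\ge\rho_w$, hence (using $M\ge 0$) $L_{\rm lin}[\underline f]\le L_{\rm lin}[w]=-A\Psi$ and therefore $N[\underline f]\le -A\Psi+r\rho_{\underline f}\underline f$; it thus remains to prove $r\,\rho_{\underline f}\,\underline f\le A\Psi$ on $\{w>0\}$. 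At the interface $\{w=0\}$ the function $\underline f$ is Lipschitz (continuous, piecewise $C^1$, with $\partial_x w$ bounded on the support), so $\partial_x\underline f$ has no singular part and the distributional inequality follows from the a.e.\ one.

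Finally I would establish the remaining bound by pushing the support of $\underline f$ far to the right. On $\{w>0\}$ we have $0<\underline f=w\le e^{-\lambda x}F_\lambda(v)$ and, since $F_\lambda\ge 0$, $\rho_{\underline f}(x)\le e^{-\lambda x}\int_V F_\lambda=e^{-\lambda x}$ (using $I(\lambda;c)=1$); hence $r\rho_{\underline f}\underline f\le r\,e^{-2\lambda x}F_\lambda(v)\le \tfrac{r(1+r)}{\kappa}e^{-2\lambda x}M(v)$ with $\kappa:=\min_{v\in V}\big(1+\lambda(c-v)\big)>0$. Comparing with $A\Psi=A(1+r)\delta_\gamma M(v)e^{-(\lambda+\gamma)x}$, the target inequality reduces (it is trivial when $M(v)=0$) to $e^{(\lambda-\gamma)x}\ge\frac{r}{\kappa A\delta_\gamma}$ for every $x$ in the support slice $\{x:w(x,v)>0\}=\big(x_0(v),+\infty\big)$, where $x_0(v)=\tfrac1\gamma\ln\!\big(A\,F_{\lambda+\gamma}(v)/F_\lambda(v)\big)$. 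Since $\lambda-\gamma>0$ the left-hand side is increasing in $x$, so it is enough to check it at $x=x_0(v)$; using $F_{\lambda+\gamma}(v)/F_\lambda(v)\ge m:=\min_{v\in V}\frac{1+\lambda(c-v)}{1+(\lambda+\gamma)(c-v)}>0$ this reduces to $A^{\lambda/\gamma}\ge\frac{r}{\kappa\,\delta_\gamma\,m^{(\lambda-\gamma)/\gamma}}$, which holds once $A$ is large enough (also large enough that $Am\ge 1$, so that $x_0(v)\ge 0$). This gives $N[\underline f]\le 0$, i.e.\ \eqref{eq-subper}.

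The main obstacle is the interplay between the truncation and the nonlocal term: on $\{w>0\}$ one cannot merely invoke linearity, because $\rho_{\underline f}\neq\rho_w$ and the genuinely nonlinear term $r\rho_{\underline f}\underline f$ works against the inequality. Dominating it by the linear slack $A\Psi\sim Ae^{-(\lambda+\gamma)x}$ is exactly what forces the support of $\underline f$ — the set $\{x>x_0(v)\}$ — into the region where $e^{-\lambda x}$ is exponentially small, so $A$ must be taken large; and for the comparison of exponents to go the right way $\gamma$ must be taken smaller than $\lambda$ (which is compatible with the smallness already imposed by Lemma \ref{minlambda}). Verifying that $\partial_x w$ stays bounded on the support, so that the distributional formulation is unproblematic, is routine.
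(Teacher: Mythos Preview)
Your proof is correct and follows essentially the same approach as the paper: choose $\gamma\in(0,\lambda)$ with $I(\lambda+\gamma;c)<1$, compute the linear defect $L_{\rm lin}[w]=-A\Psi$, and then absorb the quadratic term $r\rho_{\underline f}\,\underline f$ into $A\Psi$ by using $\underline f\le e^{-\lambda x}F_\lambda$, $\rho_{\underline f}\le e^{-\lambda x}$, and the lower bound on $x$ coming from the support condition $\{w>0\}$, which yields a condition of the form $A^{\lambda/\gamma}\ge C$. The only cosmetic differences are that you introduce the uniform constants $\kappa$ and $m$ where the paper keeps the $v$-dependence explicit up to the final step, and that you spell out the treatment of the interface $\{w=0\}$ (Lipschitz, no singular part) while the paper simply refers back to the analogous argument for the supersolution.
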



\begin{proof}[\bf  Proof of Lemma \ref{lem-super}.]
First, $(x ,v)\mapsto e^{-\lambda x }F_\lambda (v)$ and $(x ,v)\mapsto M(v)$ both clearly satisfy \eqref{eq-super} since $\overline{f}\geq 0$. 
Next, as $\overline{f}$ is continuous, it immediately follows from the jump formula that, as a minimum of two supersolutions, it is a supersolution
of (\ref{eq-super}) in the sense of distributions. 
\end{proof}

\begin{proof}[\bf  Proof of Lemma \ref{lem-sub}.] 
The same arguments as in the proof of Lemma \ref{lem-super} yield that it is enough to prove that (\ref{eq-subper}) is satisfied by $\underline{f}$ over the open set $\{\underline{f}>0\}$. 
As $c>c^*$, Proposition \ref{propdisp} gives $\gamma\in (0,\lambda)$ small enough such that $I(\lambda+\gamma;c)<1$ and $F_{\lambda + \gamma}(v) > 0$.
We compute the linear part:
\begin{equation*}
 (v-c) \partial_{x } \underline{f} - \left( M(v) \rho_{\underline{f}} - \underline{f} \right) - r\rho_{\underline{f}} M(v)\\
=A \left(I(\lambda+\gamma,c)-1\right)(1 + r)e^{-(\lambda+\gamma)x }M(v).
\end{equation*}
To prove the Lemma, we now have to choose a relevant $A$ such that 
\begin{equation}\label{rhs}
r  \underline{f}\rho_{\underline{f}} \leq A (1 + r) M(v)\left( 1-I(\lambda+\gamma,c)\right) e^{-(\lambda+\gamma) x }\, .
\end{equation}
holds for all $( x  , v ) \in \R \times V$. As $\underline{f}(x ,v)\leq e^{-\lambda x }F_\lambda (v)$ for all $(x ,v)\in\R\times V$, one has $\rho_{\underline{f}} (x ) \leq e^{-\lambda x }$
and thus it is enough to choose $A$  such that
\begin{align} 
& re^{- 2 \lambda x } F_{\lambda}(v)  \leq A (1 + r) M(v)\left( 1-I(\lambda+\gamma,c)\right) e^{-(\lambda+\gamma) x }\,, \nonumber\\
& \dfrac{r e^{-(\lambda - \gamma)x}}{1-I(\lambda+\gamma,c)}\left( \dfrac1{1 + \lambda (c-v)}\right) \leq A\, . \label{eq:estimate A below}
\end{align}
On the other hand for all $(x ,v)\in\R\times V$ such that $\underline{f}(x ,v)>0$, we have $F_\lambda (v) > A e^{-\gamma x } F_{\lambda +\gamma} (v)$, meaning that 
\begin{equation*}
e^{- \gamma x }< \frac1 A \left(\frac{1 + (\lambda+\gamma) ( c-v)}{1 +\lambda  ( c-v)}\right)\,.
\end{equation*}
Plugging this estimate into \eqref{eq:estimate A below}, it is enough to choose $A$ such that 
\begin{align*}
\left( \frac1 A \left(\frac{1 + (\lambda+\gamma) ( c-v)}{1 +\lambda  ( c-v)}\right) \right)^{\frac{\lambda-\gamma}\gamma} \dfrac{r }{1-I(\lambda+\gamma,c)}\left( \dfrac1{1 + \lambda (c-v)}\right) & \leq A \\
\sup_{v\in V} \left\{   \left(\frac{1 + (\lambda+\gamma) ( c-v)}{1 +\lambda  ( c-v)}\right)^{\frac{\lambda-\gamma}\gamma} \dfrac{r }{1-I(\lambda+\gamma,c)}\left( \dfrac1{1 + \lambda (c-v)}\right) \right\} & \leq A^{\frac\lambda\gamma}\, .
\end{align*}
This concludes the proof since such a $A$ obviously exists.
\end{proof}


\subsection{Construction of the travelling waves in the regime $c\in (c^* , v_{\rm max})$.}

Let $c\in (c^* , v_{\rm max} )$, where $c^*$ denotes the minimal speed of Proposition \ref{propdisp}. 
In order to prove the existence of travelling waves, we will prove that the solution of the following evolution equation,
corresponding to equation \eqref{eq:kinKPP} in the moving frame at speed $c$, converges to a travelling wave as $t\to +\infty$:
\begin{equation}\label{eq-approx}\left\{ \begin{array}{l}
 \partial_t g+  (v-c) \partial_x g =   M(v)\rho_g - g   +r \rho_g \left( M(v)-g\right) \hbox{ in } \R\times V,\medskip\\
g(0,x,v)=\overline{f}(x,v) \hbox{ for all } (x,v) \in\R\times V.\\ \end{array} \right.
\end{equation}
The well-posedness of equation \eqref{eq-approx} immediately follows from Proposition \ref{Cauchypbm}.
Let now derive some properties of the function $g$ from Proposition \ref{prop-mp}.

\begin{lem}\label{lem-sandwich}
For all $(t,x,v)\in \R_+\times\R\times V$, one has $\underline{f}(x,v)\leq g(t,x,v)\leq \overline{f}(x,v).$
\end{lem}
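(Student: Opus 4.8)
The plan is to apply the comparison principle of Proposition~\ref{prop-mp} twice: once to $g$ against the supersolution $\overline f$, and once to $\underline f$ against $g$. First I would check that $\overline f$ is a supersolution of \eqref{eq:kinKPP} \emph{written in the moving frame}, i.e.\ that the function $(t,x,v)\mapsto \overline f(x,v)$ (constant in time) satisfies the differential inequality associated with \eqref{eq-approx}. This is immediate from Lemma~\ref{lem-super}, since $\partial_t\overline f=0$ and the remaining terms are exactly the right-hand side of \eqref{eq-super}. Likewise, $(t,x,v)\mapsto\underline f(x,v)$ is a stationary subsolution of \eqref{eq-approx} by Lemma~\ref{lem-sub}. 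Note also that $\overline f\le M(v)$ and $\underline f\le \overline f\le M(v)$ pointwise, so the boundedness hypothesis required in Proposition~\ref{prop-mp} is met, and $0\le\overline f\le M(v)$ guarantees via Proposition~\ref{Cauchypbm} that $g$ is well defined with $0\le g(t,x,v)\le M(v)$.

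Next I would compare at the initial time. By construction $g(0,\cdot,\cdot)=\overline f$, so trivially $g(0,x,v)\le \overline f(x,v)$; applying Proposition~\ref{prop-mp} with the supersolution $\overline f$ (stationary in the moving frame) and the subsolution $g$ yields $g(t,x,v)\le\overline f(x,v)$ for all $(t,x,v)\in\R_+\times\R\times V$. For the lower bound, I first need $\underline f(x,v)\le\overline f(x,v)=g(0,x,v)$ for all $(x,v)$; this holds because $\underline f(x,v)\le e^{-\lambda x}F_\lambda(v)$ and $\underline f(x,v)\ge 0$, hence $\underline f\le\min\{M(v),e^{-\lambda x}F_\lambda(v)\}=\overline f$. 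Then Proposition~\ref{prop-mp} applied with the supersolution $g$ and the subsolution $\underline f$ (again stationary in the moving frame, and satisfying $\underline f\le M(v)$) gives $\underline f(x,v)\le g(t,x,v)$ for all $(t,x,v)$. Combining the two inequalities yields the claim.

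One small point to verify carefully is that Proposition~\ref{prop-mp} applies to the equation in the moving frame \eqref{eq-approx} and not only to \eqref{eq:kinKPP} as literally stated: this is harmless since the change of variables $x\mapsto x-ct$ transforms \eqref{eq:kinKPP} into \eqref{eq-approx} and carries sub/supersolutions to sub/supersolutions, the transport coefficient merely becoming $v-c$; alternatively one invokes the comparison principle directly in the moving frame, whose proof (given in the Appendix) is identical. I do not expect a genuine obstacle here — the only thing to be slightly attentive about is checking that the distributional sub/supersolution inequalities of Lemmas~\ref{lem-super} and~\ref{lem-sub} (which hold for the stationary profiles) are precisely what Proposition~\ref{prop-mp} requires for time-independent functions, which they are.

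\begin{proof}[\bf Proof of Lemma \ref{lem-sandwich}.]
By Lemma~\ref{lem-super}, the function $(t,x,v)\mapsto\overline f(x,v)$, which is independent of $t$, is a supersolution of \eqref{eq-approx} in the sense of distributions, and it satisfies $\overline f(x,v)\le M(v)$ for all $(x,v)\in\R\times V$. Since $g(0,\cdot,\cdot)=\overline f$, the comparison principle (Proposition~\ref{prop-mp}, in the moving frame) yields $g(t,x,v)\le\overline f(x,v)$ for all $(t,x,v)\in\R_+\times\R\times V$.

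Similarly, by Lemma~\ref{lem-sub} the function $(t,x,v)\mapsto\underline f(x,v)$ is a subsolution of \eqref{eq-approx} in the sense of distributions, and it satisfies $\underline f(x,v)\le M(v)$. Moreover, for all $(x,v)\in\R\times V$ one has $0\le\underline f(x,v)\le e^{-\lambda x}F_\lambda(v)$, hence
\begin{equation*}
\underline f(x,v)\le\min\left\{M(v),e^{-\lambda x}F_\lambda(v)\right\}=\overline f(x,v)=g(0,x,v)\,.
\end{equation*}
Applying Proposition~\ref{prop-mp} once more, this time with $g$ as supersolution and $\underline f$ as subsolution, we obtain $\underline f(x,v)\le g(t,x,v)$ for all $(t,x,v)\in\R_+\times\R\times V$. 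Combining the two bounds gives $\underline f(x,v)\le g(t,x,v)\le\overline f(x,v)$ for all $(t,x,v)\in\R_+\times\R\times V$.
\end{proof}
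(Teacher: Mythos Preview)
Your proof is correct and follows the same approach as the paper: apply Proposition~\ref{prop-mp} in the moving frame, using that $\overline f$ is a stationary supersolution and $\underline f$ a stationary subsolution with $\underline f\le\overline f$. The paper's version is simply more terse, stating it as an immediate corollary of Proposition~\ref{prop-mp} once the inequality $\underline f\le\overline f$ is noted.
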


\begin{proof}[\bf  Proof of Lemma \ref{lem-sandwich}.] 
As $\underline{f}$ is a subsolution of (\ref{eq-approx}) and $\overline{f}$ is a supersolution of (\ref{eq-approx}), 
with $\underline{f} (x,v)\leq \overline{f} (x,v)$ for all $(x,v) \in\R\times V$, this result is an immediate corollary of Proposition \ref{prop-mp}. 
\end{proof}

\begin{lem}\label{lem-monotonicity}
For all $(t,v)\in \R_+\times V$, the function $x\in\R \mapsto g(t,x,v)$ is nonincreasing.  
\end{lem}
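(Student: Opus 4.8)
The plan is to prove monotonicity by a translation/comparison argument, which is the standard route for such statements once a strong maximum principle is available (Proposition \ref{prop-mp}). First I would fix $v\in V$ and a shift $h>0$, and consider the two functions $g(t,x,v)$ and $g^h(t,x,v):=g(t,x+h,v)$. Both solve \eqref{eq-approx} (the equation is invariant under spatial translation), so it suffices to compare their initial data. At $t=0$ both coincide with $\overline f$, and since $\overline f(x,v)=\min\{M(v),e^{-\lambda x}F_\lambda(v)\}$ is manifestly nonincreasing in $x$ (both $M(v)$ and $e^{-\lambda x}F_\lambda(v)$ are nonincreasing in $x$, $\lambda>0$), we have $g^h(0,x,v)=\overline f(x+h,v)\leq \overline f(x,v)=g(0,x,v)$ for all $x$. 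Moreover $g^h(0,\cdot,\cdot)=\overline f(\cdot+h,\cdot)\leq M(\cdot)$, so the hypothesis of Proposition \ref{prop-mp} needed on the subsolution is met. The comparison principle (first part of Proposition \ref{prop-mp}, applied with $g_1=g$, $g_2=g^h$, both being genuine solutions hence simultaneously sub- and supersolutions) then yields $g(t,x+h,v)\leq g(t,x,v)$ for all $(t,x,v)\in\R_+\times\R\times V$. Since $h>0$ was arbitrary, $x\mapsto g(t,x,v)$ is nonincreasing, which is exactly the claim.

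One small point to be careful about: Proposition \ref{prop-mp} is stated for the Cauchy problem \eqref{eq:kinKPP}, whereas \eqref{eq-approx} is the same equation written in the moving frame at speed $c$ (so $v$ is replaced by $v-c$ in the transport term). The comparison principle transfers verbatim to this setting — indeed one may simply undo the change of frame, $\tilde g(t,x,v)=g(t,x-ct,v)$, apply Proposition \ref{prop-mp} to $\tilde g$ and its translate, and translate back; alternatively one checks that the proof of Proposition \ref{prop-mp} in the Appendix only uses the structure of the collision/reaction operator, which is untouched by the Galilean shift. Either way this is a routine bookkeeping matter rather than a genuine difficulty.

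The main (and essentially only) obstacle is ensuring the hypotheses of the comparison principle are literally satisfied — in particular that the translated function $g^h$ still obeys the upper bound $g^h\leq M(v)$ required of the subsolution. This is immediate here because Proposition \ref{Cauchypbm} (or Proposition \ref{prop-mp} applied to $g$ itself together with the supersolution $\overline f\leq M$) already guarantees $0\leq g(t,x,v)\leq M(v)$ for all $(t,x,v)$, hence also for the shifted argument. So no delicate estimate is needed; the proof is a clean two-line application of translation invariance plus Proposition \ref{prop-mp}, once the initial ordering $\overline f(\cdot+h,\cdot)\leq\overline f(\cdot,\cdot)$ is recorded.
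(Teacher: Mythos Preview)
Your proof is correct and follows essentially the same approach as the paper's: define the translate $g_h(t,x,v)=g(t,x+h,v)$, use that $\overline f$ is nonincreasing in $x$ to order the initial data, and invoke Proposition~\ref{prop-mp}. Your additional remarks about the moving frame and the bound $g^h\leq M$ are valid bookkeeping points that the paper leaves implicit.
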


\begin{proof}[\bf Proof of Lemma \ref{lem-monotonicity}.] 
Take $h\geq 0$ and define $g_h(t,x,v)=g(t,x+h,v)$. Then as $\overline{f}$ is nonincreasing in $x$, one has $g_h(0,x,v)\leq g(0,x,v)$ for all 
$(x,v)\in\R\times V$. Proposition \ref{prop-mp} yields that $g_h(t,x,v)\leq g(t,x,v)$ for all $(t,x,v)\in \R_+\times \R \times V$. 
\end{proof}

\begin{lem} \label{lem-timedec}
For all $(x,v)\in \R\times V$, the function $t\in\R_+ \mapsto g(t,x,v)$ is nonincreasing.  
\end{lem}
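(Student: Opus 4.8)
The plan is to exploit the monotonicity of the initial datum together with the comparison principle of Proposition~\ref{prop-mp}, exactly as in the proof of Lemma~\ref{lem-monotonicity}, but now comparing $g$ with a time-shifted copy of itself. First I would fix $\tau \geq 0$ and set $g_\tau(t,x,v) = g(t+\tau,x,v)$; by uniqueness for the Cauchy problem (Proposition~\ref{Cauchypbm}) applied in the moving frame, $g_\tau$ is again a solution of the evolution equation \eqref{eq-approx} (with shifted initial time), so it suffices to compare the two solutions at time $t=0$ and invoke the comparison principle. Concretely, $g_\tau(0,x,v) = g(\tau,x,v)$, so the claim reduces to showing $g(\tau,x,v) \leq g(0,x,v) = \overline f(x,v)$ for all $(x,v)$ and all $\tau \geq 0$. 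But this is precisely the content of Lemma~\ref{lem-sandwich}, which already gives $g(\tau,x,v) \leq \overline f(x,v)$.

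So the argument is short: by Lemma~\ref{lem-sandwich} we have $g(\tau,\cdot,\cdot) \leq \overline f = g(0,\cdot,\cdot)$ pointwise; since $g_\tau$ and $g$ both solve \eqref{eq-approx} with $g_\tau(0,\cdot,\cdot) \leq g(0,\cdot,\cdot)$ and $g(0,\cdot,\cdot) = \overline f \leq M(v)$ (so the subsolution $g_\tau$ stays below $M$, as required by Proposition~\ref{prop-mp}), the comparison principle yields $g(t+\tau,x,v) = g_\tau(t,x,v) \leq g(t,x,v)$ for all $(t,x,v) \in \R_+\times\R\times V$. As $\tau \geq 0$ was arbitrary, $t \mapsto g(t,x,v)$ is nonincreasing for every fixed $(x,v)$.

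The only points requiring a word of care are the two hypotheses of Proposition~\ref{prop-mp}: that $g_\tau$ is genuinely a (sub)solution in the sense of distributions — immediate, since it is an exact solution of the same PDE, just translated in time, and translation in time commutes with the equation because it is autonomous — and that the subsolution is bounded above by $M(v)$, which follows from Proposition~\ref{Cauchypbm} (or again from Lemma~\ref{lem-sandwich} via $\overline f \leq M$). I do not expect any real obstacle here; the lemma is a direct consequence of the maximum-principle machinery already in place, and the main "trick" — comparing a solution with its own time translate using that the initial datum was chosen to be a supersolution — is the same device used for Lemma~\ref{lem-monotonicity} with space translation replaced by time translation. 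This monotonicity, combined with the lower bound $\underline f \leq g$, is what will subsequently allow us to pass to the limit $t \to +\infty$ and obtain a stationary profile, i.e. the travelling wave.
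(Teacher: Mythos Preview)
Your proof is correct and follows essentially the same approach as the paper: define $g_\tau(t,x,v)=g(t+\tau,x,v)$, use Lemma~\ref{lem-sandwich} to get $g_\tau(0,\cdot,\cdot)\leq \overline f = g(0,\cdot,\cdot)$, and apply Proposition~\ref{prop-mp}. The additional care you take in verifying the hypotheses of the comparison principle (autonomy of the equation, the bound $g_\tau\leq M$) is welcome but the argument is otherwise identical.
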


\begin{proof}[\bf Proof of Lemma \ref{lem-timedec}.] 
Take $\tau\geq 0$ and define $g_\tau(t,x,v)=g(t+\tau,x,v)$. Then Lemma \ref{lem-sandwich} yields that 
$g_\tau(0,x,v)\leq \overline{f}(x,v)=g(0,x,v)$ for all $(x,v)\in\R\times V$. Hence, Proposition \ref{prop-mp} gives 
$g_\tau(t,x,v)\leq g(t,x,v)$ for all $(t,x,v)\in\R_+\times \R\times V$. 
\end{proof}

\begin{lem}\label{lem-lipsch}
The family $\left( g(t,\cdot,\cdot)\right)_{t\geq 0}$ is uniformly continuous with respect to $(x,v) \in \R \times V$. Moreover, for any $A\in (c^*,v_{\rm max})$,
the continuity constants does not depend on $c\in (c^*,A)$. 
\end{lem}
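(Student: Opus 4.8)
The plan is to estimate the modulus of continuity of $g(t,\cdot,\cdot)$ in the space variable and in the velocity variable separately, and then combine them by the triangle inequality. Throughout I would use the a priori facts already available: $\underline{f}\le g\le\overline{f}$ (Lemma~\ref{lem-sandwich}), whence $\rho_g(t,x)\le\rho_{\overline{f}}(x)\le\int_V M=1$, so all reaction coefficients in \eqref{eq-approx} remain bounded; the monotonicity $\partial_x g\le 0$ (Lemma~\ref{lem-monotonicity}); and $\lambda_c<\lambda_{c^*}$ for $c>c^*$ (Lemma~\ref{minlambda}), which makes $\lambda_c$, the Lipschitz constant $\lambda_c\|M\|_\infty$ of $\overline{f}$ in $x$, and the denominators $1+\lambda_c(c-v)\ge 1-\lambda_{c^*}(v_{\rm max}-c^*)>0$ all uniformly controlled for $c\in(c^*,A)$. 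From the explicit formula $\overline{f}(x,v)=M(v)\min\{1,(1+r)e^{-\lambda_c x}/(1+\lambda_c(c-v))\}$, together with the compactness of $V$, the continuity of $M$, and the uniform lower bound on the denominator, I would first record that $\overline{f}$ admits a modulus of continuity $\omega_{\overline{f}}$ in $v$ that is uniform in $x$ and in $c\in(c^*,A)$.

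For the continuity in $x$, set $\mu:=(1+r)\|M\|_\infty|V|$ and, for $h>0$, consider $w(t,x,v):=g(t,x,v)+\lambda_c\|M\|_\infty\,h\,e^{\mu t}$. A one-line computation with the reaction operator $Q[g]:=(1+r)M\rho_g-(1+r\rho_g)g$ shows that $Q[g+\eta]-Q[g]\le\eta\mu$ for every constant increment $\eta>0$, so that $w$ is a supersolution of \eqref{eq-approx}. Since $\overline{f}$ is $\lambda_c\|M\|_\infty$-Lipschitz in $x$, one has $w(0,\cdot,\cdot)\ge\overline{f}(\cdot-h,\cdot)=g(0,\cdot-h,\cdot)$, and $g(\cdot,\cdot-h,\cdot)$ is a solution of \eqref{eq-approx} (by translation invariance in $x$) bounded above by $M$; the comparison principle (Proposition~\ref{prop-mp}) then yields $0\le g(t,x-h,v)-g(t,x,v)\le\lambda_{c^*}\|M\|_\infty\,e^{\mu t}\,h$, where the lower bound uses Lemma~\ref{lem-monotonicity}. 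Hence $g(t,\cdot,v)$ is Lipschitz in $x$ with constant $\lambda_{c^*}\|M\|_\infty e^{\mu t}$, uniformly in $v$ and in $c\in(c^*,A)$.

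For the continuity in $v$, fix $v,v'\in V$ and put $W(t,x):=g(t,x,v)-g(t,x,v')$. Since $\rho_g$ is common to the two equations, $W$ solves, in the sense of distributions,
\begin{equation*}
\partial_t W+(v-c)\partial_x W+(1+r\rho_g)W=(1+r)\bigl(M(v)-M(v')\bigr)\rho_g+(v'-v)\,\partial_x g(t,x,v'),
\end{equation*}
with $|W(0,\cdot)|\le\omega_{\overline{f}}(|v-v'|)$ and, by the previous step, $\|\partial_x g(t,\cdot,v')\|_\infty\le\lambda_{c^*}\|M\|_\infty e^{\mu t}$. Using $\rho_g\le 1$, the modulus $\omega_M$ of $M$, and integrating the mild formulation along the characteristics of $\partial_t+(v-c)\partial_x$ (whose damping factor is $\le e^{-(t-s)}$), I would obtain
\begin{equation*}
\sup_x|W(t,x)|\le\omega_{\overline{f}}(|v-v'|)+(1+r)\,\omega_M(|v-v'|)+\frac{\lambda_{c^*}\|M\|_\infty}{\mu+1}\,e^{\mu t}\,|v-v'|,
\end{equation*}
so $g(t,\cdot,\cdot)$ is uniformly continuous in $v$, with a modulus depending only on $t$ (through $e^{\mu t}$) and on the fixed data, not on $x$ nor on $c\in(c^*,A)$.

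Combining the two steps via $|g(t,x,v)-g(t,x',v')|\le|g(t,x,v)-g(t,x',v)|+|g(t,x',v)-g(t,x',v')|$ yields the uniform continuity of the family, with constants uniform in $c\in(c^*,A)$ and uniform in $t$ on any bounded interval — which is what is needed for the Arzel\`a--Ascoli argument in the next subsection. I expect the main obstacle to be the nonlocal coupling through $\rho_g$: it forbids a naive fixed-velocity comparison in the $v$-estimate and a priori allows the modulus of continuity to grow in time; both difficulties are handled by the bound $\rho_g\le 1$ (a consequence of $g\le\overline{f}$ and $\int_V M=1$), which keeps every coefficient bounded so that the comparison and characteristic estimates close. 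The second delicate point is the uniformity in $c$, which is precisely where Lemma~\ref{minlambda} enters, controlling $\lambda_c$, the Lipschitz constant of $\overline{f}$ in $x$, the modulus $\omega_{\overline{f}}$, and the denominators $1+\lambda_c(c-v)$ uniformly on $(c^*,A)$.
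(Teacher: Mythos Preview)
Your argument has the right two-step structure---spatial regularity first, then feed the $\partial_x g$ bound into a velocity-difference estimate along characteristics---and your velocity step is essentially the paper's own (the paper differentiates in $v$ when $M\in\mathcal C^1$ and passes to finite differences for general continuous $M$, just as you do). The genuine gap is in the spatial step, specifically in the dependence on $t$.

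Your additive comparison $w=g+\lambda_c\|M\|_\infty\,h\,e^{\mu t}$ produces a Lipschitz constant in $x$ that grows like $e^{\mu t}$, and this growth then propagates into the velocity estimate. You end with equicontinuity only on bounded time intervals and assert that this is ``what is needed''. It is not: in the construction of the wave for $c>c^*$ the profile $f$ is defined as $\lim_{t\to\infty}g(t,x,v)$, and its continuity is read off from equicontinuity of the \emph{full} family $(g(t,\cdot,\cdot))_{t\ge0}$; in the construction for $c=c^*$, Ascoli is applied to the waves $f_n$, each of which is itself a $t\to\infty$ limit and must therefore inherit a time-independent modulus. Both uses require moduli uniform over all $t\ge0$, not just over compact time intervals.

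The paper bypasses the exponential growth by a multiplicative rather than additive comparison. Since $\log\overline f$ is $\lambda_c$-Lipschitz in $x$, one has $g(0,x+h,v)\le(1+C_0|h|)\,g(0,x,v)$; and because the nonlinearity is logistic (the term $-r\rho_g g$ is superlinear in $g$), the function $(1+C_0|h|)\,g(t,x-h,v)$ is a supersolution of \eqref{eq-approx}. Comparison then gives $g(t,x,v)\le(1+C_0|h|)\,g(t,x-h,v)$ with \emph{no} time-dependent factor, so $\log g$---hence $g$, since $g\le M$---is Lipschitz in $x$ uniformly in $t$. With this bound in hand, your velocity argument (equivalently the paper's) yields a $t$-independent modulus in $v$ as well, because the damping $e^{-(t-s)}$ now integrates a uniformly bounded source to at most $1-e^{-t}\le1$.
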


\begin{proof}[\bf Proof of Lemma \ref{lem-lipsch}.] We begin with the space regularity. 
Let $|h|<1$. The function $g(0,x,v)=\overline{f}(x,v)=\min \{ M(v) , e^{-\lambda x} F_\lambda (v)\}$ is such that $\log g(0,x,v)$ is Lipschitz-continuous with respect to $x$. Therefore there exists a constant $C_0>0$ such that for all $(x,v) \in \R\times V$, we have
$g(0,x+h,v) \leq (1+C_0 |h|) g (0,x,v).$ 
As $1+C_0 |h|>1$, it is easily checked that $(t,x,v) \mapsto (1+C_0 |h|) g (t,x-h,v)$ is a supersolution of \eqref{eq-approx}. 
Hence Proposition \ref{prop-mp} yields that 
\[g(t,x,v) \leq  (1+C_0 |h|) g (t,x-h,v) \quad \hbox{ for all } (t,x,v) \in \R_+ \times \R\times V\, .\]
Hence the function $\log g$ is Lipschitz continuous with respect to $x$. Since the function $\log g$ is bounded from above, $g = \exp(\log g)$ is also Lipschitz continuous with respect to $x$. The Lipschitz constant is uniform with respect to $c \in (c^*,A)$ and 
$\lambda \in \left(0,1/(v_{\rm max}-c)\right)$.

We now come to the velocity regularity. For the sake of clarity we first consider the case where $M$ is $\mathcal C^1$ on $V$. The function $v\mapsto g(0,x,v)$ is  $\mathcal C^1$ too. We introduce $g_v = \partial_v g$. It satisfies the following equation
\[  \partial_t g_v+  (v-c) \partial_x g_v + (1 + r \rho_g )g_v =   (1 + r   ) M'(v)\rho_g  - \partial_x g \hbox{ in } \R\times V\, . \]
Multiplying the equation by $\sign g_v$ we obtain
\[  \partial_t |g_v|+  (v-c) \partial_x |g_v| + (1 + r \rho_g )|g_v| \leq   (1 + r   ) |M'(v)|\rho_g  + |\partial_x g| \hbox{ in } \R\times V\, . \]
The l.h.s. is linear with respect to $|g_v|$ and satisfies the maximum principle. The r.h.s. is uniformly bounded since $0\leq \rho_g \leq 1$ and $g$ is uniformly Lipschitz with respect to $x$. Obviously the constant $(1+r) \sup_V |M'(v)| + \sup_{\R_+\times\R\times V}|\partial_x g|$ is a supersolution. 
We deduce that $g_v$ is uniformly bounded over $\R_+\times\R\times V$. 

In the case where $M$ is only continuous over the compact set $V$, thus uniformly continuous, we shall use the method of translations again. However we have to be careful since $V$ is bounded. Let $0<h<1$. We introduce $H(v) = \max(v + h,v_{\rm max}) - v$. The function $g_H(t,x,v) = g(t,x,v + H(v)) - g(t,x,v)$ satisfies the following equation
\[  \partial_t g_H + (v-c)\partial_x g_H + (1 + r\rho_g) g_H = (1 + r)(M(v + H(v)) - M(v))\rho_g - H(v) \partial_x g (t,x,v+H(v))\, .  \]
Let  $\eps>0$. There exists $\delta >0$ such that for $0<h<\delta$ we have $|g_H(0,x,v)|\leq \delta$ and $|M(v + H(v)) - M(v)|<\delta$. Moreover we have obviously $0<H(v)<\delta$. We conclude using the same argument as in the $\mathcal C^1$ case. The modulus of uniform continuity is uniform with respect to $c \in (c^*,A)$ and 
$\lambda \in \left(0,1/(v_{\rm max}-c)\right)$. 
\end{proof}

We are now in position to prove the existence of travelling waves of speed $c$, except for the minimal speed~$c^*$. 

\begin{proof}[\bf Proof of the existence in Theorem \ref{existence-tw} when $c>c^*$.]

Gathering Lemmas \ref{lem-sandwich}, \ref{lem-monotonicity} and \ref{lem-timedec}, we know that 
\begin{equation*}
f(x,v)=\lim_{t\to +\infty} g(t,x,v),
\end{equation*} 
is well-defined for all $(x,v)\in\R\times V$, that $f(\cdot,v)$ is nonincreasing in $x$ for all $v$ and that $\underline{f}\leq f\leq \overline{f}$. 

Let now prove that $f$ defines a travelling wave solution of \eqref{eq:kinKPP2}. As $g$ satisfies \eqref{eq-approx}, converges pointwise 
and is bounded by the 
locally integrable function $\overline{f}$, it  follows from Lebesgue's dominated convergence theorem that $f$ satisfies \eqref{eqkinwave} in the sense of 
distributions. Moreover, Lemma \ref{lem-lipsch} ensures that $f$ is continuous with respect to $(x,v)$. 

We next check the limits towards infinity. Let $f^\pm(v)= \lim_{x\to \pm \infty} f(x,v)$. Thanks to $f\leq \overline{f}$, the Lebesgue dominated convergence theorem gives 
$\rho_{f^\pm}= \int_V f^\pm(v)dv\leq 1$. On the other hand, we get 
\begin{equation} \label{eq-limit} 
\left( M(v)\rho_{f^\pm} - f^\pm (v) \right) + r\rho_{f^\pm} \left( M(v) - f^\pm(v) \right) = 0 
\end{equation}
in the sense of distributions. 
Integrating \eqref{eq-limit} over the compact set $V$, we deduce that $\rho_{f^\pm} \left( 1 - \rho_{f^\pm} \right) = 0$, \textit{i.e.} 
that $\rho_{f^\pm}= 0$ or $1$. As $f$ is nonincreasing and $\underline{f}\leq f\leq \overline{f}$,
one necessarily has $\rho_{f^+}=0$ and $\rho_{f^-}=1$. Finally, (\ref{eq-limit}) gives $f^+(v)=0$ and $f^-(v)=M(v)$ for all $v\in V$. 
This gives the appropriate limits. 
\end{proof}


\subsection{Construction of the travelling waves with minimal speed $c^*$.}

\begin{proof}[\bf Proof of the existence in Theorem \ref{existence-tw} when $c=c^*$.]
Consider a decreasing sequence $(c_n)$ converging towards $c^*$. We already know that for all $n$, equation \eqref{eq:kinKPP} admits a travelling wave solution 
$u_n (t,x,v) = f_n (x-c_n t ,v)$, with $f_n (-\infty,v)=M(v)$ and $f_n (+\infty,v)=0$, and $ z \mapsto f_n ( z ,v)$ is nonincreasing. Up to translation, we can assume that 
$\rho_{f_n}(0) = 1/2$. 
Moreover, Lemma \ref{lem-lipsch} ensures that the functions $(f_n)_n$ are uniformly continuous with respect to $(x,v) \in \R\times V$ since the continuity stated in Lemma \ref{lem-lipsch}
is uniform with respect to $c\in (c^*,A)$ for any $A\in (c^*,v_{\rm max})$. Thanks to the Ascoli theorem and a diagonal extraction process, 
we can assume that the sequence $(f_n)_n$ converges locally uniformly in $(x,v) \in \R\times V$ to a function $f$. 
Clearly $f$ satisfies \eqref{eqkinwave} in the sense of distributions. Moreover, as $f$ is nonincreasing with respect to $x$, one could recover the appropriate limits at 
infinity with the same arguments as in the proof of the existence of travelling waves with speeds $c>c^*$. 
\end{proof}


\subsection{Non-existence of travelling wave solutions in the subcritical regime $c\in [0,c^*)$.}


\begin{lem} \label{lem:posHarnack} Assume that $\inf_V M(v)>0$. For all $0\leq c< c^*$ there exists $c<c_0<c^*$ and a nonnegative, arbitrarily small, compactly supported function $\sub(x,v)$ which is a subsolution of 
\begin{equation} \label{eq:kinKPPc}
(v-c^0) \partial_x f  = M(v) \rho_f - f + r\rho_f \left( M(v) -f\right) \quad \hbox{ in } \R\times V\,.
\end{equation}
\end{lem}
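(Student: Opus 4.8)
**The plan is to construct an explicit subsolution by linearization near the unstable state $f\equiv 0$, exploiting the fact that when $c<c^*$ the dispersion relation $I(\lambda;c)=1$ has no real root, hence $\inf_\lambda I(\lambda;c)>1$, which opens up room to build a compactly supported "bump" subsolution of the linearized equation.** More precisely, since $\inf_V M>0$ the linearized operator around $0$ is genuinely parabolic-like and by Proposition \ref{propdisp} (and the analysis of $I$ therein) we have $\theta(c)=\min_{\lambda}I(\lambda;c)>1$ for every $c\in[0,c^*)$; by continuity of $\theta$ we may pick $c_0\in(c,c^*)$ with $\theta(c_0)>1$ still. The strict inequality $I(\lambda;c_0)>1$ for all admissible real $\lambda$ is exactly what will let us afford a small perturbation (a spatial cutoff costing a negative contribution) while keeping the subsolution inequality.

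First I would write down the linearized equation \eqref{eq:kinKPPc} in the regime $f\ll 1$, namely $(v-c_0)\partial_x f = (1+r)M(v)\rho_f - f$, and look for a separated ansatz $\sub(x,v)=\psi(x)G(v)$ or, more flexibly, a finite superposition of the exponential profiles $e^{-\lambda x}F_\lambda(v)$ for complex $\lambda$. Since $\theta(c_0)>1$, the equation $I(\lambda;c_0)=1$ has no real solution but, viewing $I(\cdot;c_0)$ as an analytic function, it has complex roots $\lambda=\mu\pm i\omega$ with $\mu>0$ (this is the standard mechanism producing oscillating fronts, alluded to in the introduction). Taking the real part of $e^{-\lambda x}F_\lambda(v)$ gives a solution of the linearized equation of the form $e^{-\mu x}\big(\cos(\omega x)\,A(v)+\sin(\omega x)\,B(v)\big)$, which oscillates in $x$ and in particular is positive on an interval between two consecutive zeros. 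I would then truncate this to a compact $x$-interval $[x_1,x_2]$ where it is nonnegative and vanishes at the endpoints, multiply by a small constant $\eta>0$ so that $\sub\le M(v)$ (hence the nonlinear terms $r\rho_f(M-f)$ and $-\rho_f f$ are a genuine nonnegative/controllable perturbation), and check that the jump contributions at $x_1,x_2$ have the right sign for a subsolution in the sense of distributions — because $\sub$ is a continuous function that is zero outside $[x_1,x_2]$ and equals the (smooth, vanishing-at-endpoints) oscillating profile inside, its distributional derivative has no singular part, and the inequality reduces to the pointwise linear identity minus the discarded nonlinear terms, which is $\le 0$ once $\eta$ is small since $-\rho_f f\le 0$ and $r\rho_f M\ge 0$ is what we are subtracting...

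Here I should be careful about signs, so let me restate: the nonlinearity in \eqref{eq:kinKPPc} is $M(v)\rho_f - f + r\rho_f(M(v)-f)$; expanding, $=(1+r)M(v)\rho_f - f - r\rho_f f$, so relative to the linear part we have an \emph{extra} term $-r\rho_f f\le 0$. Thus \emph{any} nonnegative solution of the linearized equation is automatically a subsolution of the nonlinear one, provided it stays $\ge 0$ and $\le M(v)$. So the whole task collapses to: produce a nonnegative, compactly supported, small function that solves (or is a subsolution of) the linearized equation $(v-c_0)\partial_x f=(1+r)M(v)\rho_f - f$ in the distributional sense. The oscillating-profile construction above does exactly this on one positivity lobe; multiplying by small $\eta$ gives smallness and the bound by $M(v)$. \textbf{The main obstacle I anticipate is verifying that the truncated oscillating profile is genuinely a subsolution across the truncation points}, i.e. that setting it to zero outside the lobe does not create a bad (positive) Dirac mass in $(v-c_0)\partial_x\sub$: since the profile is continuous and vanishes at the lobe endpoints this should be fine, but one must confirm the profile is $\mathcal C^0$ (it is, being real parts of $e^{-\lambda x}F_\lambda$ with $F_\lambda$ continuous because $\inf_V M>0$ keeps denominators bounded away from a sign change only if $c_0<v_{\max}$ — which holds) and handle the $v$-integration defining $\rho$ carefully so that the eigenfunction relation $\int_V F_\lambda = 1$ times $I(\lambda;c_0)$ is used consistently. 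A cleaner alternative, avoiding complex roots entirely, is to build $\sub$ as a perturbation of a truncated \emph{real} exponential $e^{-\lambda x}F_\lambda(v)$ with $\lambda$ chosen so $I(\lambda;c_0)>1$: then $e^{-\lambda x}F_\lambda$ is a strict supersolution-type object and one corrects it near $x\to-\infty$ by subtracting a suitable multiple of $e^{-(\lambda-\gamma)x}F_{\lambda-\gamma}$, mirroring the subsolution construction in Lemma \ref{lem-sub}, then cuts off at large $x$; I would present whichever of these two routes makes the distributional inequality most transparent, and I expect the bookkeeping of the cutoff terms to be the only real difficulty.
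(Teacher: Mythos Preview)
Your overall strategy---find complex roots of the dispersion relation, take the real part to get an oscillating solution of the linearized equation, and truncate to a positivity lobe---is exactly what the paper does. But there is a genuine sign error that breaks the argument as written.

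You claim that since the nonlinear right-hand side is $(1+r)M(v)\rho_f - f - r\rho_f f$ and the extra term $-r\rho_f f$ is $\leq 0$, any nonnegative solution of the \emph{linearized} equation is automatically a subsolution of the nonlinear one. This is backwards. If $h\geq 0$ solves $(v-c_0)\partial_x h = (1+r)M(v)\rho_h - h$ exactly, then
\[
(v-c_0)\partial_x h = (1+r)M(v)\rho_h - h \;\geq\; (1+r)M(v)\rho_h - h - r\rho_h h,
\]
so $h$ is a \emph{supersolution}, not a subsolution. This is the usual KPP structure: the linearization dominates the nonlinearity from above, not below. Consequently the ``whole task collapses'' step fails, and neither of your two proposed routes (complex roots or the Lemma~\ref{lem-sub}-style correction) gives a subsolution without an additional idea.

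The paper repairs this by introducing a margin: it replaces $r$ by $r-\delta$ with $\delta>0$ small, solves $I_{r-\delta}(\lambda^0;c^0)=1$ at a complex $\lambda^0$ (found via Rouch\'e near the real minimizer for speed $c^*_{r-\delta}>c$), and then uses the leftover $\delta M(v)\rho_{\kappa h^0}$ to absorb the bad nonlinear term $r\kappa h^0\rho_{\kappa h^0}$ once $\kappa$ is small enough (here $\inf_V M>0$ is used). Two further points you glossed over that the paper handles carefully: (i) positivity of $\mathrm{Re}\,F_{\lambda^0}(v)$ for all $v\in V$ is not automatic for an arbitrary complex root---it is ensured by choosing $\lambda^0$ close to the real root $\lambda^*_{r-\delta}$ via Rouch\'e; (ii) the positivity lobe of $h^0$ is a domain $D\subset [b_1,b_2]\times V$ whose boundary depends on $v$, not simply an $x$-interval times $V$, since the zeros of $x\mapsto h^0(x,v)$ shift with $v$.
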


\begin{proof}[\bf Proof of Lemma \ref{lem:posHarnack}]
For the sake of clarity we emphasize the dependence of the function $I$ \eqref{eq-defI} upon the growth rate $r>0$:
$$I_r(\lambda;c) = \int_V \frac{(1 + r) M(v)}{1 + \lambda(c - v)}\, dv\, .$$
We denote by $c^*_{r} $ the smallest speed such that there exists a solution $\lambda>0$ of 
$I_r (\lambda,c)=1$ (see Proposition \ref{propdisp}). 

Let $\delta>0$. By continuity we can choose $\delta$ so small that $c<c^*_{r-\delta}$. We claim that there exists $(c^0,\lambda^0)$ such that $I_{r-\delta}(\lambda^0;c^0) = 1$, with $c<c^0<c^*_{r-\delta}$ and $\lambda^0\in \C\setminus \R$. Indeed we know from the proof of Proposition \ref{propdisp} [Step 3] that $\lambda^*_{r} < 1/(v_{\rm max} - c^*_{r})$ under the assumption $v\mapsto M(v) / (v_{\rm max}-v) \notin L^1 (V)$. Using a continuity argument we also have the strict inequality $\lambda^*_{r-\delta} < 1/(v_{\rm max} - c^*_{r-\delta})$, uniformly with respect to $\delta$. The complex function $\lambda\mapsto I_{r-\delta}(\lambda;c^*_{r-\delta})$ is analytic in a neighborhood of $\lambda^*_{r-\delta}$. Hence, the Rouch\'e theorem yields that there exists $c^0<c^*_{r-\delta}$ such that the equation $I_{r-\delta}(\lambda;c^0) = 1$ has a solution $\lambda^0\in \C$ with $\lambda^0$ arbitrarily close to $\lambda^*_{r-\delta}$. We denote by $F^0(v)$ the corresponding velocity profile,
\[ F^0 (v)= \frac{(1+r-\delta)M(v)}{1 + \lambda^0  ( c^0 - v )}\,,\quad \int_V F^0(v)\, dv = 1\,.
 \]
By continuity we can choose $c^0$ and $\lambda^0$ such that 
$\text{Re}\left(F^0(v)\right)>0$ holds for all $v\in V$. By the very definition of $c^*_{r-\delta}$, we have $\lambda^0\notin \R$. We denote $\lambda^0 = \lambda_{R} +i \lambda_{I} $. Recall that we have  the strict inequality $\lambda^*_{r-\delta} < 1/(v_{\rm max} - c^*_{r-\delta})$, uniformly with respect to $\delta$. Using a continuity argument we can   impose that $\lambda_R<1/(v_{\rm max}-c^0)$. 
Now define the real function $\sub^0$ by 
\begin{equation}\label{reecriturew} 
h^{0}(x,v) = \text{Re} \left(e^{-\lambda^0 x}F^0 (v)\right) = e^{-\lambda_{R}x}\left[ \text{Re} \left(F^0(v)\right) \cos (\lambda_{I}x)+ \text{Im} \left(F^0(v)\right) \sin (\lambda_{I}x) \right],
\end{equation}
One has $\sub^0(0,v)>0$ and $\sub^0(\pm \pi/\lambda_{I},v)<0$ for all $v\in V$. 
Thus, there exists an interval $[b_{1},b_{2}]\subset \R$ and a bounded domain $D\subset  [b_{1},b_{2}]\times V$  such that:
\begin{equation*} \left\{ \begin{array}{l}
\sub^0(x,v)>0 \quad  \hbox{for all} \ (x,v)\in D,\medskip\\
\sub^0(x,v)=0 \quad  \ \hbox{for} \ (x,v)\in\partial D.\\
\end{array} \right. 
\end{equation*}
On the other hand, as $\lambda_R<1/(v_{\rm max}-c^0)$, there exists a constant $C(\delta)$ such that
\begin{equation*}
(\forall v\in V)\quad |\sub^0 (x,v)| \leq  e^{-\lambda_R b_1} |F^0 (v)|=  e^{-\lambda_R b_1}\frac{(1+r-\delta)M(v)}{|1+\lambda^0 (c^0-v)|}\leq 
C(\delta)  M(v)\,.
\end{equation*}
Hence, one can choose $\overline{\kappa}>0$ small enough such that 
\begin{equation*}
r\overline{\kappa} \sub^0 (x,v) \leq \frac\delta2 M(v) \quad \hbox{for all} \; (x,v) \in\R\times V\,.
\end{equation*}
For all $\kappa \in (0,\overline{\kappa})$ we deduce from $I_{r-\delta}(\lambda^0;c^0)=1$ the following identities,
\begin{align}
\kappa (v-c^0) \partial_x\left( e^{-\lambda^0 x}F^0 (v)\right) + \kappa\left( e^{-\lambda^0 x}F^0 (v)\right)  & = \kappa e^{-\lambda^0 x}(1+ r-\delta)M(v) \nonumber\\
& = \kappa (1+ r-\delta) M(v) \int_V e^{-\lambda^0 x}F^0(v') \, dv' \, .\nonumber 
\end{align}
Taking real part on both sides, we get for $(x,v)\in D$,
\begin{align*}
(v-c)  \partial_x \left(\kappa \sub^0\right) + \kappa \sub^0 & = (1+r-\delta) M(v) \int_V \kappa \sub^0(x,v')\, dv' 
\\  
& = M(v) \rho_{\kappa \sub^0} + rM(v) \rho_{\kappa \sub^0}   - \delta M(v) \rho_{\kappa \sub^0}
\\ 
& \leq M(v) \rho_{\kappa \sub^0} + r\left(M(v) - \kappa \sub^0\right)  \rho_{\kappa \sub^0} \, .
\end{align*}
Hence $\kappa \sub^0$ is a subsolution of \eqref{eq:kinKPPc} for all $\kappa \in (0,\overline{\kappa})$ on $D$. We deduce that the truncated function $\sub = \max(0, \kappa \sub^0)$ is a subsolution of \eqref{eq:kinKPPc} over $\R\times V$. 
\end{proof}

\begin{proof}[\bf Proof of the non-existence in Theorem \ref{existence-tw}.]
Assume that $f\in \mathcal{C}^0 (\R\times V)$ is a travelling wave solution of \eqref{eqkinwave} of speed $c\in (0,c^*)$. 
%
%
According to Lemma \ref{lem:posHarnack}, there exists $c<c^0<c^*$ and a nonnegative compactly supported subsolution $\sub$ of \eqref{eqkinwave} 
with speed $c^0$. 
As $f$ is positive and continuous, we can decrease $\sub$ so as to obtain $f\geq \sub$. Let $g_1(t,x,v)= f(x-ct,v)$ and $g_2 (t,x,v)= \sub(x-c^0t,v)$. These two functions 
are respectively a solution and a subsolution of \eqref{eq:kinKPP}. As $g_1 (0,x,v) = f(x,v)\geq \sub(x,v) = g_2 (0,x,v)$ for all $(x,v) \in \R\times V$, 
Proposition \ref{prop-mp} implies 
$$g_1 (t,x,v) = f(x-ct,v)\geq \sub(x-c^0t,v) = g_2 (t,x,v) \quad \hbox{ for all } (t,x,v) \in \R_+\times \R\times V.$$
Taking $x=c^0t$ and letting $t\to +\infty$, we get 
$$0=\lim_{t\to +\infty} f\left( (c^0-c)t,v \right) \geq \sub(0,v)\,.$$
This is a contradiction. 
\end{proof}


\subsection{Proof of the spreading properties}

\begin{proof}[\bf Proof of Proposition \ref{prop:spreadingbounded}]
 1. Let $c>c^*$. Consider first the initial datum
$$\widetilde{g}^0 (x,v) = \left\{ \begin{array}{lll} M(v)  &\hbox{ if }& x<x_R\,,\\
                       0 &\hbox{ if }& x\geq x_R\, ,\\ 
                      \end{array}\right. $$
and let $\widetilde{g}$ the solution of the Cauchy problem \eqref{Cauchypbm}. Denote by $f$ a travelling wave of minimal speed $c^*$. 
There exists $\kappa>1$ such that $\widetilde{g}^0 (x,v) \leq \kappa f(x,v)$ for all 
$(x,v) \in \R\times V$. It is straightforward to check that $g_1 (t,x,v) = \kappa f(x-c^*t,v)$ is a supersolution of \eqref{eq:kinKPP}. 
Hence, the comparison principle of Proposition \ref{prop-mp} implies that $\widetilde{g}(t,x,v) \leq g_1 (t,x,v)$ for all $(t,x,v) \in \R_+ \times \R\times V$.
In particular we have, 
$$\widetilde{g}(t,ct,v) \leq g_1 (t,ct,v) = \kappa f\left( (c-c^*) t,v\right) \quad \hbox{ for all } (t,v) \in \R_+ \times V.$$
As $f(+\infty,v) = 0$ for all $v\in V$ and $c>c^*$, we get $\lim_{t\to +\infty} \widetilde{g}(t,ct,v) = 0$ for all $v\in V$. 

On the other hand, as $\widetilde{g}^0$ is nonincreasing with respect to $x\in\R$ it  follows from the comparison principle that $x\mapsto \widetilde{g}(t,x,v)$ is nonincreasing (see Lemma \ref{lem-monotonicity}). Thus $\widetilde{g}(t,x,v) \leq \widetilde{g}(t,ct,v)$ for all $x\geq ct$ and the conclusion follows. 

For a general initial datum $g^0$ satisfying the hypotheses of Proposition \ref{prop:spreadingbounded}, one has $g^0 (x,v)\leq \widetilde{g}^0 (x,v)$ for all $(x,v) \in \R\times V$ and thus 
$g(t,x,v) \leq \widetilde{g} (t,x,v)$ for all $(t,x,v) \in \R_+ \times \R\times V$, from which the conclusion follows.

2. Let $c<c^*$. The same arguments as in the first step yield that we can assume that 
$$g^0 (x,v) = \left\{ \begin{array}{lll} \gamma M(v)  &\hbox{ if }& x<x_L\,,\\
                      0 &\hbox{ if }& x\geq x_L\,.\\ 
                      \end{array}\right. $$
Let $\sub$ a compactly supported subsolution of \eqref{eq:kinKPPc} associated with a speed $c^0 \in (c,c^*)$. Since $\sub$ can be chosen arbitrarily small, up to translation of $\sub$, we can always assume that $\sub(x,v) \leq g^0 (x,v)$. Let $g_2$ the solution of the Cauchy problem \eqref{Cauchypbm} 
associated with the initial datum $g_2 (0,x,v) = \sub(x,v)$. The comparison principle yields $g(t,x,v) \geq g_2 (t,x,v)$ for all $(t,x,v) \in \R_+ \times\R\times V$.

Let $w(t,x,v) = g_2 (t,x+c^0t,v)$. This function satisfies
\begin{equation} \label{Cauchycw} \left\{\begin{array}{l} 
 \partial_t w+  (v-c^0) \partial_x w =  M(v)\rho_{w} - w +r  \rho_{w} \left( M(v)-w \right) \quad \hbox{ in } \R_+ \times \R\times V \\
w(0,x,v) = g(x,v) \quad \hbox{ in } \R\times V.\\ \end{array} \right. 
\end{equation}
Clearly $\sub$ is a (stationary) subsolution of this equation. The comparison principle Proposition \ref{prop-mp} yields that $t\mapsto w(t,x,v)$ is nondecreasing for all $(x,v) \in \R\times V$ (see also Lemma \ref{lem-timedec} for a similar proof). 

Let $w_\infty (x,v)= \lim_{t\to +\infty} w(t,x,v)$. This function is clearly a weak solution of 
$$(v-c^0) \partial_x w_\infty =  M(v)\rho_{w_\infty} - w +r  \rho_{w_\infty} \left( M(v)-w_\infty \right) \quad \hbox{ in }  \R\times V.$$
Moreover, we have $w_\infty (x,v) \geq w(0,x,v) = \sub(x,v)$ and $w_\infty (x,v) \leq M(v)$. 

\begin{lem}[Sliding lemma]\label{lem:sliding}
We have $w_\infty \equiv M$.
\end{lem}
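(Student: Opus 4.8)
The idea is that $w_\infty$ is a stationary solution of the $c^0$-moving-frame equation trapped between the small bump $h$ and $M$, and that the only such trapped stationary solution is $M$ itself: a non-trivial one would either carry a macroscopic flux incompatible with the monotonicity forced by the equation, or would be a travelling wave of the forbidden speed $c^0<c^*$. I would first record the regularity and positivity of $w_\infty$. Since $h$ vanishes on a compact set, the $\log$-trick of Lemma~\ref{lem-lipsch} is not available, but the translation/comparison arguments behind that lemma apply to the evolution $w$ solving \eqref{Cauchycw} and the resulting moduli of continuity are uniform in $t$, hence pass to the limit: $w_\infty$ is continuous, uniformly Lipschitz in $x$ and uniformly continuous in $v$. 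Together with the strong maximum principle of Proposition~\ref{prop-mp} (using that $V$ is an interval and $\inf_V M>0$) this gives $w_\infty>0$ everywhere; and with $0\le w_\infty\le M$, $w_\infty\ge h\not\equiv 0$, $\rho_{w_\infty}$ Lipschitz, and $V$ an interval, the uniform continuity in $v$ implies that $\rho_{w_\infty}(x)\to 0$ along a sequence forces $\sup_v w_\infty(x,v)\to 0$ along that sequence.

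Next I would integrate the stationary equation over $V$ and use \eqref{eq:hypM} to get, in the sense of distributions,
\[
J(x):=\int_V (v-c^0)\,w_\infty(x,v)\,dv,\qquad \frac{d}{dx}J(x)=r\,\rho_{w_\infty}(x)(1-\rho_{w_\infty}(x))\ge 0 .
\]
Since $w_\infty\le M$, $J$ is Lipschitz, non-decreasing and bounded (by $v_{\rm max}+c^0$), so $J(\pm\infty)$ exist and $\int_\R \rho_{w_\infty}(1-\rho_{w_\infty})\,dx<\infty$. Combined with the Lipschitz bound on $\rho_{w_\infty}$, this forces $\rho_{w_\infty}(x)(1-\rho_{w_\infty}(x))\to 0$ as $|x|\to\infty$; moreover $\rho_{w_\infty}$ cannot remain in $[1/3,2/3]$ on an unbounded set nor jump across it, so $\rho_{w_\infty}(x)\to \ell_\pm\in\{0,1\}$ as $x\to\pm\infty$. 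By dominated convergence, $J(+\infty)=-c^0\ell_+$ and $J(-\infty)=-c^0\ell_-$.

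I would then discuss the four cases for $(\ell_-,\ell_+)$. If $\ell_-=\ell_+=0$ then $J(\pm\infty)=0$, so $J\equiv 0$, hence $\rho_{w_\infty}(1-\rho_{w_\infty})\equiv 0$ and the continuous $\rho_{w_\infty}$ is identically $0$ or $1$; since it tends to $0$ it is $\equiv 0$, so $w_\infty\equiv 0$, contradicting $w_\infty\ge h\not\equiv 0$. If $\ell_-=0,\ell_+=1$ then monotonicity of $J$ requires $0=J(-\infty)\le J(+\infty)=-c^0<0$, a contradiction. If $\ell_-=1,\ell_+=0$ then $w_\infty$ is a continuous positive travelling wave of speed $c^0\in(0,c^*)$, and I would reproduce the non-existence argument of Theorem~\ref{existence-tw}: apply Lemma~\ref{lem:posHarnack} with speed $c^0$ to obtain $c^1\in(c^0,c^*)$ and an arbitrarily small, non-negative, compactly supported subsolution $h^1$ of the stationary equation at speed $c^1$; rescale it so $h^1\le w_\infty$ everywhere (possible since $w_\infty>0$ is continuous and $h^1$ is compactly supported and as small as wished); then $w_\infty(x-c^0t,v)$ and $h^1(x-c^1t,v)$ are respectively a solution and a subsolution of \eqref{eq:kinKPP} ordered at $t=0$, so Proposition~\ref{prop-mp} gives $h^1(y,v)\le w_\infty((c^1-c^0)t+y,v)$ for all $t\ge0$; letting $t\to+\infty$ and using $\sup_v w_\infty(x,v)\to 0$ as $x\to+\infty$ yields $h^1\le 0$, a contradiction. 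Hence $\ell_-=\ell_+=1$, so $J\equiv -c^0$, whence $\rho_{w_\infty}(1-\rho_{w_\infty})\equiv 0$ and, by continuity and $\rho_{w_\infty}\to 1$, $\rho_{w_\infty}\equiv 1$. Plugging this into the stationary equation gives $(v-c^0)\partial_x w_\infty=(1+r)(M(v)-w_\infty)$ in the sense of distributions; solving this linear transport ODE along the straight characteristics, the only solution bounded on all of $\R$ with $0\le w_\infty\le M$ is $w_\infty(\cdot,v)\equiv M(v)$ (for $v>c^0$ use $x\to-\infty$, for $v<c^0$ use $x\to+\infty$, for $v=c^0$ the equation is algebraic). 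Therefore $w_\infty\equiv M$.

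The main obstacle is the case $\ell_-=1,\ell_+=0$, and more precisely the regularity input needed to make the tail analysis and the comparison rigorous: the uniform Lipschitz bound in $x$ and uniform continuity in $v$ for $w_\infty$, which require adapting Lemma~\ref{lem-lipsch} to a compactly supported initial datum, and the legitimacy of inserting an arbitrarily small compactly supported subsolution below a positive continuous stationary solution. Once these facts and the flux identity are in hand, the remaining steps are elementary.
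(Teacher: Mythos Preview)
Your approach is correct but takes a genuinely different route from the paper. The paper proves the lemma via a direct \emph{sliding} argument (hence the name): after establishing $w_\infty>0$ everywhere via the strong maximum principle, it slides the compactly supported bump $h_y(x,v)=h(x-y,v)$ under $w_\infty$ for every $y\in\R$, using the strong comparison principle to force the maximal $\kappa\in(0,1]$ with $w_\infty\ge\kappa h_y$ to equal $1$; this yields $w_\infty(y,v)\ge h(0,v)$ for all $y$, hence $\inf_{\R\times V}w_\infty>0$. A second sliding, now in $\kappa$ with $M$ in place of $h_y$, then gives $w_\infty\equiv M$. No flux identity, no asymptotic analysis of $\rho_{w_\infty}$, and no appeal to the non-existence of subcritical waves are used.

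Your route---monotonicity of the macroscopic flux $J$, case analysis on $(\ell_-,\ell_+)$, and in the case $(1,0)$ a replay of the non-existence argument from Theorem~\ref{existence-tw}---works in principle and highlights the PDE structure nicely, but it is longer and imports more machinery. Two technical remarks on the gaps you flagged. First, you do not actually need $\rho_{w_\infty}$ Lipschitz; uniform continuity suffices for the step ``$\int_\R\rho(1-\rho)\,dx<\infty\Rightarrow\rho(1-\rho)\to0$'', and this can be read directly off the stationary equation: for $|v-c^0|\ge\delta$ one has $|\partial_x w_\infty(\cdot,v)|\le C/\delta$, while the contribution of $\{|v-c^0|<\delta\}$ to $\rho_{w_\infty}$ is $O(\delta)$. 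Second, in the case $\ell_+=0$ you do not need uniform continuity in $v$ to obtain $w_\infty(x,v)\to0$: solving the linear first-order ODE in $x$ for each fixed $v\neq c^0$, using boundedness of $w_\infty$ at $\pm\infty$, already gives $w_\infty(x,v)\le(1+r)M(v)\sup_{y\ge x}\rho_{w_\infty}(y)\to0$, and for $v=c^0$ the equation is algebraic. The paper's sliding argument avoids all of this by working purely with comparisons, which is why it is considerably shorter.
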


\begin{proof}[\bf Proof of Lemma \ref{lem:sliding}]
\noindent{\bf \# Step 1.} First we prove that $w_\infty$ is positive everywhere. 

Take $(x_0,v_0)\in\R\times V$ 
such that $w_\infty(x_0,v_0)>0$. As $\widetilde w(t,x,v)= w_\infty(x-c^0 t,v)$ satisfies 
\eqref{eq:kinKPP}, Proposition \ref{prop-mp} yields $\widetilde w(t,x,v) >0$ for all $(t,x,v) \in \R_+ \times \R\times V$ such that $|x-x_0|< v_{\rm max} t$. 
As $c^0<c^* \leq v_{\rm max}$, for all $(x,v)\in \R\times V$ one can take $t>0$ large enough so that $|x+ct-x_0| < v_{\rm max} t$. Therefore $w(x,v) = \widetilde w(t,x+ct,v)>0$. 
We thus conclude that $w_\infty$ is positive over $\R\times V$.

\medskip

\noindent{\bf \# Step 2.}  Next we prove that $\inf w_\infty >0$.

Let $y\in \R$. Define $\sub_y (x,v)= \sub(x-y,v)$, and 
$$\kappa_y= \sup \{ \kappa \in (0,1), \ w_\infty \geq \kappa \sub_y \hbox{ in } \R \times V\}\, .$$
As $\sub_y$ is compactly supported and $w_\infty$ is positive over $\R\times V$ and continuous, we have
$w_\infty \geq \kappa \sub_y  $ when $\kappa>0$ is small enough. Therefore $\kappa_y>0$. 

We argue by contradiction. Assume that $\kappa_y < 1$. The definition of $\kappa_y$ yields that $u=w_\infty - \kappa_y \sub_y\geq 0$ and that $\inf_{\R\times V} u=0$. 
As $\sub_y$ is compactly supported, this infimum is indeed reached: there exists $(x_y,v_y)\in \R\times V$ such that $u(x_y,v_y)=0$. 
Assume that $u\not\equiv 0$ and take $(x_y',v_y') \in\R\times V$ such that $w_\infty(x_y',v_y') > \kappa_y \sub_y (x_y',v_y')$. 

We introduce $w_1 (t,x,v) = w_\infty(x-c^0t,v)$ and $w_2 (t,x,v) = \kappa_y \sub_y (x-c^0t,v)$. As $w_1 (0,x_y',v_y')> w_2 (0,x_y',v_y')$, Proposition \ref{prop-mp} gives
$w_1 (t,x,v) > w_2 (t,x,v) $ for all $(t,x,v) \in \R_+ \times \R\times V$ such that $|x-x'_y|< v_{\rm max} t$, that is:
$$w_\infty(x-c^0t,v) > \kappa_y \sub_y (x-c^0t,v) \quad \hbox{ if }\, |x-x'_y|< v_{\rm max} t\,.$$
As $c^0<c^*\leq v_{\rm max}$, for all $x\in\R$, one can take $t>0$ large enough so that $|x+c^0t-x'_y|< v_{\rm max} t$, leading to $w_\infty(x,v) > \kappa_y \sub_y (x,v)$ for all $(x,v) \in \R\times V$, a contradiction 
since equality holds at $(x_y,v_y)$. 

Hence, $w_\infty \equiv \kappa_y \sub_y$, which is also a contradiction since $w_\infty$ is positive while $\sub_y$ is compactly supported. We conclude that $\kappa_y = 1$, namely  
$w_\infty \geq \sub_y$. Evaluating this inequality at $x=y$, one gets $w_\infty(y,v) \geq  \sub(0,v)$ for all $(y,v)\in \R\times V$. 
As $\inf_V g(0,v)>0$ under the assumption  $\inf_V M>0$, 
we have proved in fact that 
$$\inf_{\R\times V} w_\infty >0 \,.$$

\noindent{\bf \# Step 3.}  
As $\inf_{V} M>0$, we can define 
$$\kappa^* = \sup \{ \kappa  \in (0,1), \ w_\infty(x,v)\geq \kappa M(v) \hbox{ for all } (x,v) \in\R\times V\}.$$
We know from the previous step that this quantity is positive. If $\kappa^*<1$, then the same types of arguments as in Step 2 lead to a contradiction. Hence $\kappa^* =1$, meaning 
that $w_\infty\geq M(v)$. As $w_\infty\leq M(v)$, we conclude that $w_\infty\equiv M(v)$. 
\end{proof}

As a consequence of Lemma \ref{lem:sliding} we obtain 
$$\lim_{t\to +\infty} g_2 (t,x+c^0t,v) = M(v) \quad \hbox{for all } (x,v) \in \R\times V.$$
This implies in particular that $\lim_{t\to +\infty}g (t,x+c^0t,v) = M(v)$ for all $(x,v)$ by  a sandwiching argument. 
Moreover, as $g^0$ is nonincreasing with respect to $x$, $x\mapsto g(t,x,v)$ is nonincreasing and thus 
$g(t,x,v) \geq g(t,c^0 t,v)$ for all $x\leq c^0 t$, from which the conclusion follows since $c^0>c$. 
\end{proof}

\section{Proof of the dependence results}\label{dependance}

\begin{proof}[\bf Proof of Proposition \ref{prop:bound on c*}(a)]
Recall that the dispersion relation giving the speed $c(\lambda)$ as a function of the exponential decay $\lambda$ is $I(\lambda;c(\lambda)) = 1$, 
where $I$ is defined in \eqref{eq-defI}. Let introduce $I_\sigma$ the function associated with the dilated velocity profile 
$M_\sigma$. The function $I_\sigma$ clearly satisfies the scaling relation $I_\sigma (\lambda;c(\lambda)) = I (\sigma \lambda; \sigma^{-1} c(\lambda))$, 
therefore we get $c^*(M_\sigma)=\sigma c^* (M)$ from the very definition of $c^*$. 
\end{proof}

\begin{proof}[\bf Proof of Proposition \ref{prop:bound on c*}(b)]
We use the symmetry of the kernel $M(v) = M(-v)$ to write
\begin{equation*}
I(\lambda;c) =  \int_0^{v_{\max}} \frac{(1 + r)(1 + \lambda  c)}{(1 + \lambda c)^2 - \lambda^2v^2} 2M(v)\, dv \, .
\end{equation*}
Let define
\begin{equation*} 
f(v) =  \dfrac{(1+r)(1 + \lambda c)}{(1 + \lambda c )^2 - \lambda^2 v^2}.\,
\end{equation*}
It is an increasing function over $(0,v_{\max})$, thus $f_\star= f$. The Hardy-Littlewood inequality \cite[Chap. 3]{Lieb-Loss} yields
\[\int_0^{v_{\max}} M^\star(v)f_\star(v) dv \leq \int_0^{v_{\max}} M(v) f(v) dv \leq \int_0^{v_{\max}} M_\star(v)f_\star(v) dv\,. \]
The dispersion relation is nonincreasing with respect to $c$. It follows immediately that 
\begin{equation*}
c^* (M^\star) \leq c^*(M) \leq c^* (M_\star).
\end{equation*}
\end{proof}

\begin{proof}[\bf Proof of Proposition \ref{prop:bound on c*}(c)]
We use the symmetry of the kernel $M(v) = M(-v)$. For $\lambda>0$ the dispersion relation writes
\begin{equation}
(1 + r) \int_{0}^{v_{\max}} \frac{(1 + \lambda  c(\lambda))}{(1 + \lambda  c(\lambda))^2 - \lambda^2v^2}2M(v)\, dv = 1\, . \label{eq:dispersion-symmetric}
\end{equation}
Since the function $X\mapsto \left( (1 + \lambda  c(\lambda))^2 - \lambda^2 X \right)^{-1}$ is convex on its domain of definition, Jensen's inequality yields
\begin{equation*}
(1 + r) \frac{(1 + \lambda  c(\lambda))}{(1 + \lambda c(\lambda))^2 - \lambda^2\left( 2\int_0^{v_{\max}}v^2M(v)\, dv\right)} \leq 1\, .
\end{equation*}
We recognize the dispersion relation associated with the two-speed model \cite{Bouin-Calvez-Nadin}. We deduce
\begin{equation*}
\lambda^2  c(\lambda)^2 + (1-r) \lambda  c(\lambda) - D \lambda^2 -r  \geq 0 \, .
\end{equation*}
This second-order polynomial has a negative value at $c = 0$, therefore $ c(\lambda)$ is necessarily greater than the vanishing value,
\[  c(\lambda) \geq \dfrac{(r-1) + \sqrt{(r-1)^2 + 4( D\lambda^2 + r)}}{2\lambda}\, . \]
Minimizing with respect to $\lambda>0$, we deduce that,
\[
\begin{cases} c^* \geq \dfrac{2\sqrt{rD}}{1+r} & \mbox{if $r< 1$} \, ,\medskip\\
c^* \geq \sqrt{D}  & \mbox{if $r\geq  1$}  \, . 
\end{cases}
\]
On the other hand we clearly obtain from \eqref{eq:dispersion-symmetric},
\begin{equation*}
(1 + r) \frac{(1 + \lambda  c(\lambda))}{(1 + \lambda  c(\lambda))^2 - \lambda^2v_{\max}^2} \geq 1\, .
\end{equation*}
By comparison of the relations, as in the proof of Proposition \ref{prop:bound on c*}(b), we deduce that the speed corresponding to a given distribution $M(v)$ supported on $(-v_{\max}, v_{\max})$ is smaller than the speed corresponding to $\frac12 (\delta_{-v_{\max}}+\delta_{v_{\max}})$. This peculiar case is analysed in \cite{Bouin-Calvez-Nadin}. The minimal speed in this case is 
\[ c^*\left(\frac12 (\delta_{-v_{\max}}+\delta_{v_{\max}})\right) = \begin{cases}   \dfrac{2\sqrt{r}}{1+r}v_{\max} & \mbox{if $r< 1$} \, ,\medskip\\
 v_{\max}  & \mbox{if $r\geq  1$}  \, . 
\end{cases} \]
\end{proof}

\begin{proof}[\bf Proof of Proposition \ref{prop:bound on c*}(d).]
The dispersion relation for the rescaled equation \eqref{eq:kinKPP2} reads
\begin{equation} \label{eq-defIe} I_\e(\lambda;c) = (1 + \epsilon^2 r) \int_V \dfrac{1}{1 + \epsilon^2\lambda(c - v/\eps)} M(v)\, dv\,. \end{equation}
The previous result guarantees that $c^*_\eps$ is bounded from above for $\eps^2 r<1$,
\[ c_\eps^* \leq \dfrac{2\sqrt{\eps^2r}}{1 + \eps^2r} \left(\dfrac{v_{\max}}\eps\right) \leq  2\sqrt{ r}   v_{\max} \, . \]
For a given $\lambda>0$, we perform a Taylor expansion of \eqref{eq-defIe} up to second order,
\[I_\e(\lambda;c)=1+\epsilon^2 (r-\lambda c +\lambda^2D) +O(\e^3)\,,\]
uniformly for $c\in [0,2\sqrt{ r}   v_{\max}]$, since $V$ is bounded. Therefore, solving the relation dispersion for the minimal speed boils down to solving 
\[ r-\lambda c_\eps(\lambda) +\lambda^2D + O(\eps) = 0\, . \]
We deduce
\[ \lim_{\eps\to 0} c_\eps(\lambda) = \frac r\lambda + \lambda D\, . \]
Therefore the minimal speed verifies $\lim_{\eps\to 0} c_\eps^* = 2\sqrt{rD}$.  
%
%
\end{proof}


\section{Stability of the travelling waves}\label{Stability}

\subsection{Linear stability}

In this Subsection, we focus on the linearized problem around some travelling wave moving at speed $c\in [c^* , v_{\rm max})$. 
We recall that  we consider a solution $u$ of the equation associated with the linearization around a travelling wave:
\begin{equation}\label{kinmf}
\partial_t u + (v - c) \partial_{ z } u + \left( 1 + r \rho_f \right) u = \left( (1+r) M  - r f   \right) \int_V u'dv'.
\end{equation}
where the notation $'$ always stands in the sequel for a function of the $(t, z ,v')$ variable. 
We shall prove stability of the wave in a suitable $L^2$ framework, inspired by \cite{Kirchgassner92,Gallay94,GallayRaugel,Bouin-Calvez-Nadin}.

\begin{proof}[\bf Proof of Theorem \ref{LinStability}]
\eb{We search for an ansatz $u=w e^{\phi}$}. The function $w$   satisfies: 
\begin{equation}\label{eq-u}
 \partial_t w +  (v - c) \partial_{ z } w + \left(   (v -  c) \partial_ z  \phi +  1 + r  \rho_f  \right) w = \left((1+r) M  - r f   \right) \int_{V} e^{\phi' - \phi} w' dv',
\end{equation}
From \eqref{eq-u}, we shall derive the dissipation inequality \eqref{eq:dissipation linear}. We test \eqref{eq-u} against $w$ to obtain the \eb{following} energy estimate:
\begin{multline}\label{kinNRJ}
\frac{d}{dt} \left( \frac12 \int_{\R \times V}   \left\vert w \right\vert^2 d  z  dv \right) + \int_{\R \times V} \left(   (v -  c) \partial_ z \phi +   1 + r  \rho_f \right) \left\vert w \right\vert^2 d  z  dv \\ = \int_{\R \times V \times V'} \left((1+r) M  - r f   \right) e^{\phi' - \phi} w w' dvdv'd z .
\end{multline}
We aim at choosing a weight $\phi$ such that the dissipation is coercive in $L^2$ norm. Let define the symmetric kernel $K$ as follows
\begin{equation}\label{eq:defT(v,v')}
K(v,v') = \left(  (v -  c) \partial_ z \phi + 1 + r\rho_f  \right) \delta_{v = v'} -\dfrac12 \left( \left( ( 1 + r  )M - r f\right)  e^{\phi' - \phi} +  \left( ( 1 + r  )M' - r f'\right)  e^{\phi - \phi'} \right),
\end{equation}
we seek a function $\phi$ such that 
\begin{equation*}
K(v,v') \geq A( z ,v) \delta_{v = v'},
\end{equation*}
for a suitable positive function $A$, in the sense of kernel operators. For this purpose we focus on the eigenvalues of the kernel operator $A( z ,v) \delta_{v = v'} - K(v,v') $. 

\begin{defi}[Weight $\phi$] \label{eq:def weight phi}
We introduce
$\Lambda( z ) \in \left[ 0 , \frac{1}{  v_{\max} -  c} \right) $ the smallest solution of the following dispersion relation 
\begin{equation}\label{eq:def-Lambda}
\int_{V} \frac{ (1 + r) M(v) - r  f( z ,v) }{ 1 + \Lambda( z )  (c-v) }\, dv = 1\,,
\end{equation}
and we define $\Gamma( z )$ through the differential equation
\begin{equation}\label{eq:def-C}\frac12 \frac{\Gamma'( z )}{\Gamma( z )} = \Lambda( z )\, ,\quad \Gamma(0) = 1\, .\end{equation}
Finally we define 
\begin{equation}\label{eq:def-phi}
\phi( z ,v) = \frac12 \ln \left( \dfrac{ (1 + r) M(v) -  r  f( z ,v)}{\Gamma( z )}   \right) \,,
\end{equation}
\end{defi}
Recall that $0\leq f \leq M$, so that the weight $\phi$ is well-defined as soon as $\Lambda$ is well-defined. 
A small argumentation is required to prove that $\Lambda( z )$ is well-defined too. For a given $c$ and $ z $, define
\begin{equation*}
G(\Lambda) = \int_{V} \frac{ (1+r) M(v) - r  f( z ,v) }{ 1 + \Lambda  (c -  v) }\, dv\,, \quad \Lambda \in \left[ 0 , \frac{1}{  v_{\max} -  c} \right).
\end{equation*}
The function $G$ is continuous, and satisfies the following properties, 
\begin{align*}
& G(0) = \left( 1 + r \right) - r \rho_f( z ) = (1 + r) \left( 1 - \rho_f( z ) \right) + \rho_f( z ) \in \left[ 1 , 1 + r  \right]\,,\\
& G(\lambda ) = \int_{V} \frac{ (1+r) M(v) - r   f( z ,v)  }{ 1 + \lambda  (c -  v) } dv = 1 - \int_{V} \frac{r  f( z ,v) }{ 1 + \lambda  (c-v) } dv \leq 1\,,
\end{align*}
where $\lambda$ is chosen such that $I(\lambda;c) = 1$.  
Thus we can define the smallest $\Lambda( z )\in \left[ 0 , \lambda \right]$ such that $G(\Lambda( z ))=1$.

\eb{
\begin{rmq}
[Asymptotic behavior of the weight] It is important to state clearly the asymptotic behavior of the weight as it determines the possible perturbations. The following estimates were established in Section~\ref{existence}, 
\begin{equation*}
\lim_{z \to - \infty} f(z,v) = M(v), \quad \lim_{z \to + \infty} f(z,v) = 0.
\end{equation*}
We deduce from \eqref{eq:def-Lambda} and the dispersion relation \eqref{eq:dispersion} that 
\begin{equation*}
\lim_{z \to - \infty} \Lambda(z) = 0, \quad \lim_{z \to + \infty} \Lambda(z) = \lambda.
\end{equation*}
It yields from \eqref{eq:def-phi} and \eqref{eq:def-C} that 
\begin{equation*}
\phi(z,v) \underset{z \to -\infty}{\sim} - \frac12 \log\left(\dfrac{\Gamma( z )}{ M(v) }\right), \quad  \phi(z,v)   \underset{z \to +\infty}{\sim} -  \lambda z\, .   
\end{equation*}
The precise behavior of $\Gamma$ near $-\infty$ would require further analysis about the integrability of $\Lambda$. However, we believe it converges towards a positive constant. As compared to \cite{Cuesta12}, $\phi$ combines the two weights in a single one, see \cite[Eqs (3.6)-(3.7)]{Cuesta12}. As a consequence, the perturbation $g^0 - f$ must decay faster than the wave profile at $+\infty$ to have finite energy, as usual.
\end{rmq}
}

\begin{lem}\label{eigens} 
Let $A$ be defined as 
\[ A(  z  , v ) = \frac{r}{2} \left( \rho_f( z ) + \frac{ f( z ,v)}{(1+r)M(v) - r f( z ,v)} \right)\,,\] and $\bT$ be the operator associated with the symmetric kernel $T(v,v') = A( z ,v) \delta_{v = v'} - K(v,v')$. The operator  $\bT$ is nonpositive.
\end{lem}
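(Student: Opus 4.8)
The statement is that the symmetric kernel operator $\mathbf{T}$ associated with $T(v,v') = A(z,v)\delta_{v=v'} - K(v,v')$ is nonpositive. Writing out $T$ explicitly using \eqref{eq:defT(v,v')} and \eqref{eq:def-phi}, the diagonal part is
\[
A(z,v) - \bigl((v-c)\partial_z\phi + 1 + r\rho_f\bigr),
\]
while the off-diagonal (rank-one-type) part is
\[
\tfrac12\bigl((1+r)M(v) - rf(z,v)\bigr)e^{\phi'-\phi} + \tfrac12\bigl((1+r)M(v') - rf(z,v')\bigr)e^{\phi-\phi'}.
\]
The plan is to exploit the fact that $e^{2\phi(z,v)} = \bigl((1+r)M(v) - rf(z,v)\bigr)/\Gamma(z)$, so that the weighted combination $e^{\phi'-\phi}\bigl((1+r)M(v)-rf(z,v)\bigr)$ simplifies dramatically. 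In fact $e^{\phi'-\phi} = e^{\phi(z,v')}/e^{\phi(z,v)}$, and multiplying by $e^{2\phi(z,v)} = ((1+r)M(v)-rf)/\Gamma$ shows that the quantity $\bigl((1+r)M(v)-rf(z,v)\bigr)e^{\phi'-\phi} = \Gamma(z)\,e^{\phi(z,v)}e^{\phi(z,v')}$ is \emph{symmetric} in $(v,v')$. Hence the off-diagonal part of $T$ is exactly $\Gamma(z)\,e^{\phi(z,v)}e^{\phi(z,v')}$, a genuine rank-one symmetric kernel.

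The key step is then a conjugation/similarity trick: test $\mathbf{T}$ against an arbitrary $\psi \in L^2(V)$ and write $\psi(v) = e^{\phi(z,v)}\chi(v)$. The quadratic form becomes
\[
\langle \mathbf{T}\psi,\psi\rangle = \int_V \bigl(A(z,v) - (v-c)\partial_z\phi - 1 - r\rho_f\bigr)e^{2\phi}\,\chi(v)^2\,dv - \Gamma(z)\left(\int_V e^{2\phi}\chi(v)\,dv\right)^2.
\]
By Cauchy–Schwarz the rank-one term is controlled, but one must be careful: the natural reference measure is $e^{2\phi}\,dv$, and the total mass $\int_V e^{2\phi}\,dv = \frac{1}{\Gamma}\int_V((1+r)M - rf)\,dv = \frac{1}{\Gamma}\bigl((1+r) - r\rho_f\bigr)$. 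So I would instead normalize and apply Cauchy–Schwarz in the form $\bigl(\int e^{2\phi}\chi\bigr)^2 \le \bigl(\int e^{2\phi}\bigr)\bigl(\int e^{2\phi}\chi^2\bigr)$, reducing nonpositivity of $\langle\mathbf{T}\psi,\psi\rangle$ to the \emph{pointwise} scalar inequality
\[
A(z,v) - (v-c)\partial_z\phi(z,v) - 1 - r\rho_f(z) + \Gamma(z)\int_V e^{2\phi(z,v')}\,dv' \ge 0 \quad\text{(hoped form)},
\]
though more likely the correct reduction keeps the rank-one term exact and one needs $A(z,v) - (v-c)\partial_z\phi - 1 - r\rho_f \ge 0$ together with a compatibility relation making the rank-one piece vanish against the top eigenvector — this is precisely where the dispersion relation \eqref{eq:def-Lambda} defining $\Lambda$ and the ODE \eqref{eq:def-C} defining $\Gamma$ enter.

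Concretely, the crucial computation is $(v-c)\partial_z\phi$. From \eqref{eq:def-phi}, $\partial_z\phi = \frac12\Bigl(\frac{-r\partial_z f}{(1+r)M - rf} - \frac{\Gamma'}{\Gamma}\Bigr) = \frac12\frac{-r\partial_z f}{(1+r)M - rf} - \Lambda(z)$ using \eqref{eq:def-C}. One then substitutes $\partial_z f$ from the travelling-wave equation \eqref{eqkinwave}, namely $(v-c)\partial_z f = (1+r)M\rho_f - (1+r\rho_f)f$, to get $(v-c)\partial_z\phi$ in closed form. Plugging this in and using the definition of $A$ from Lemma \ref{eigens}, the diagonal term should collapse, modulo the rank-one term, to an expression whose nonnegativity is equivalent to $G(\Lambda(z)) = 1$, i.e. to \eqref{eq:def-Lambda}, and to $\Lambda(z)$ being the \emph{smallest} root (so that $1 + \Lambda(z)(c-v) > 0$ for all $v\in V$, keeping denominators positive). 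I expect the main obstacle to be precisely this bookkeeping: correctly identifying that $A$ was reverse-engineered to make the quadratic form a perfect "square minus rank-one" and that the rank-one term is annihilated by the eigenvector $v\mapsto 1 + \Lambda(z)(c-v)$ (or its reciprocal weighted appropriately), which is exactly the $G(\Lambda)=1$ condition — verifying that the residual diagonal coefficient is $\ge 0$ pointwise in $v$, uniformly, using $0\le f\le M$ and $\Lambda \le \lambda < 1/(v_{\max}-c)$.
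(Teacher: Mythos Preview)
Your key structural insight is correct and valuable: with the weight $\phi$ of Definition \ref{eq:def weight phi} one has $((1+r)M(v)-rf(z,v))e^{\phi'-\phi}=\Gamma(z)e^{\phi(z,v)}e^{\phi(z,v')}$, so the off-diagonal part of $T$ is the rank-one kernel $\Gamma(z)e^{\phi(z,v)}e^{\phi(z,v')}$. Combined with the computation of the diagonal (which, after substituting \eqref{eqkinwave}, collapses exactly to $A-(v-c)\partial_z\phi-1-r\rho_f=(v-c)\Lambda(z)-1=-(1+\Lambda(z)(c-v))$), one obtains the clean quadratic-form identity
\[
\langle \mathbf{T}\psi,\psi\rangle \;=\; -\int_V \bigl(1+\Lambda(z)(c-v)\bigr)\psi(v)^2\,dv \;+\;\Gamma(z)\left(\int_V e^{\phi(z,v)}\psi(v)\,dv\right)^2.
\]

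The gap in your plan is the Cauchy--Schwarz step. The unweighted inequality $(\int e^{2\phi}\chi)^2\le(\int e^{2\phi})(\int e^{2\phi}\chi^2)$ leads to the pointwise condition $(v-c)\Lambda+(1+r)-r\rho_f\le 0$, which is \emph{false} for $v>c$ (and your displayed ``hoped form'' has the sign reversed on top of that). The correct move is a \emph{weighted} Cauchy--Schwarz with weight $(1+\Lambda(c-v))^{-1}$:
\[
\left(\int_V e^{\phi}\psi\right)^2 \le \left(\int_V \frac{e^{2\phi}}{1+\Lambda(c-v)}\,dv\right)\left(\int_V (1+\Lambda(c-v))\psi^2\,dv\right),
\]
and then $\Gamma\int_V e^{2\phi}/(1+\Lambda(c-v))\,dv=\int_V((1+r)M-rf)/(1+\Lambda(c-v))\,dv=1$ is \emph{exactly} the dispersion relation \eqref{eq:def-Lambda}. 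This closes the argument in one line, and the equality case of Cauchy--Schwarz identifies the positive null-vector $\psi\propto e^{\phi}/(1+\Lambda(c-v))$.

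This is a genuinely different route from the paper. The paper instead solves $\mathbf{T}(W)=0$ directly: after the same diagonal simplification, the eigenvector equation reduces to a closed $2\times 2$ linear system for two moments of $W$, which is degenerate precisely because of \eqref{eq:def-Lambda}; an explicit positive $W$ is exhibited, and one concludes by the Perron--Frobenius principle (symmetric operator, nonnegative off-diagonal, positive eigenvector with eigenvalue $0$). Your approach is shorter once the right weight is found, and makes the role of \eqref{eq:def-Lambda} transparent as the saturation constant in Cauchy--Schwarz; the paper's approach avoids the need to guess that weight and produces the null eigenvector constructively.
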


\begin{proof}[\bf Proof of Lemma \ref{eigens}]
We shall prove that $0$ is the Perron eigenvalue of the  operator  $\bT$. For that purpose we shall exhibit a positive eigenvector in the kernel of $\bT$.
The equation $\bT(W)=0$ reads
\begin{equation*} 
(\forall v\in V)\quad \int_{V} \left( A( z ,v) \delta_{v = v'} - K(v,v')\right) W(v') dv' = 0\, .
\end{equation*}
Plugging the formula for $K(v,v')$ \eqref{eq:defT(v,v')} into this expression we get, 
\begin{multline*}
\left(  A( z ,v) -  (v -  c) \partial_ z \phi( z ,v) - 1 - r  \rho_f( z ) \right) W(v) + \frac12 \left( (1+r)  M(v) - r   f( z ,v)  \right) \left(\int_V e^{\phi( z ,v') - \phi( z ,v)} W(v')\, dv'\right)
\\
+  \frac12 \int_{V} \left( (1+r)  M(v') - r   f( z ,v')  \right) e^{\phi( z ,v) - \phi( z ,v')} W(v')\, dv' = 0\, .
\end{multline*} 
From the Definitions \eqref{eq:def-Lambda}-\eqref{eq:def-phi} we have,
\begin{equation*}
\partial_{ z } \phi( z ,v) = - \frac{r}{2} \frac{\partial_{ z }  f ( z ,v)}{ (1+r) M(v) - r   f( z ,v) } -  \Lambda( z )\,. 
\end{equation*}
The weight $\phi$ and the function $A$ are chosen such that
\begin{align*} 
& A( z ,v) -  (v -  c) \partial_ z \phi( z ,v) - 1 - r  \rho_f( z )\\ 
& = \frac{r}{2} \left( \rho_f( z ) + \frac{ f( z ,v)}{(1+r)M(v) - r f( z ,v)} + (v-c)  \frac{\partial_{ z }  f ( z ,v)}{ (1+r) M(v) - r   f( z ,v) } \right) + (v-c)  \Lambda( z ) - 1 - r\rho_f( z )\\
& =  \frac{r}{2} \left( 2\rho_f( z )\right) + (v-c)\Lambda( z ) - 1 - r\rho_f( z )\\
& =   (v-c)\Lambda( z ) - 1  \,.
\end{align*}
Therefore the equation $\bT(W) = 0$ is equivalent to
\begin{equation}\label{eq:eigen-u}
W(v) = \dfrac1{1 + \Lambda( z )(c-v)}\left( \frac12 \left( (1+r)  M(v) - r   f( z ,v)  \right) e^{-\phi( z ,v)}  X_1( z ) + \frac12 e^{\phi( z ,v)} X_2( z ) \right)\,,
\end{equation}
where the macroscopic quantities $X_1$ and $X_2$ are defined as follows,
\[ X_1( z ) = \int_V e^{ \phi( z ,v')} W(v')\, dv'\, , \quad X_2( z ) = \int_{V} \left( (1+r)  M(v') - r   f( z ,v')  \right) e^{ - \phi( z ,v')} W(v')\, dv'\, . \]
To resolve this eigenvalue problem, we seek proper values for $X_1$ and $X_2$. From \eqref{eq:eigen-u} we deduce that they 
are solution of a $2\times2$ closed linear system, namely
\[
\begin{cases}
\displaystyle X_1( z ) = \frac12 \left(  \int_{V} \frac{  (1+r)  M(v)  - r   f( z ,v)   }{ 1 + \Lambda( z ) (c -  v)  }\, dv \right) X_1(v) + \frac12 \left(  \int_{V} \frac{ e^{2\phi( z ,v)}   }{ 1 + \Lambda( z ) (c -  v)  }\, dv \right) X_2( z )\medskip\\
\displaystyle X_2( z ) = \frac12 \left(  \int_{V} \frac{ \left( (1+r)  M(v)  - r   f( z ,v) \right)^2  e^{-2\phi( z ,v)}  }{ 1 + \Lambda( z ) (c -  v)  }\, dv \right) X_1(v) +  \frac12 \left(  \int_{V} \frac{  (1+r)  M(v)  - r   f( z ,v)   }{ 1 + \Lambda( z ) (c -  v)  }\, dv \right) X_2( z )
\end{cases}
\]
This system simplifies thanks to the choice of $\Lambda( z )$ \eqref{eq:def-Lambda}. Indeed we have
\begin{align*}
&\int_{V} \frac{  (1+r)  M(v)  - r   f( z ,v)   }{ 1 + \Lambda( z ) (c -  v)  }\, dv   = 1 \\
&\int_{V} \frac{ e^{2\phi( z ,v)}   }{ 1 + \Lambda( z ) (c -  v)  }\, dv = \left( \int_{V} \frac{  (1+r)  M(v)  - r   f( z ,v)   }{ 1 + \Lambda( z ) (c -  v)  }\, dv \right) \dfrac1{\Gamma( z )} = \dfrac1{ \Gamma( z )} \\
&  \int_{V} \frac{ \left( (1+r)  M(v)  - r   f( z ,v) \right)^2  e^{-2\phi( z ,v)}  }{ 1 + \Lambda( z ) (c -  v)  }\, dv  = \left(    \int_{V} \frac{  (1+r)  M(v)  - r   f( z ,v)   }{ 1 + \Lambda( z ) (c -  v)  }\, dv \right)  \Gamma( z )  =  \Gamma( z )\, . 
\end{align*}
We are reduced to the following eigenvalue problem,
\[
\begin{pmatrix}
X_1( z ) \\
X_2( z )
\end{pmatrix} = 
\frac12\begin{pmatrix}
1 & \Gamma( z )^{-1} \\
\Gamma( z ) & 1
\end{pmatrix} \begin{pmatrix}
X_1( z ) \\
X_2( z )
\end{pmatrix} \, .
\]
Clearly, $(X_1( z ),X_2( z )) =  (1, \Gamma( z ))$ is the unique solution up to multiplication. We obtain eventually that $W$ is given (up to a multiplicative factor) by
\begin{align*}
W(v) & = \frac12 \frac{  \left( (1+r)M(v) - r   f( z ,v)   \right) e^{- \phi( z ,v)}   + e^{\phi( z ,v)} \Gamma( z ) }{ 1 + \Lambda( z ) (c -  v) } \\
& = \dfrac{\left[ \left( (1+r)M(v) - r   f( z ,v)   \right) \Gamma( z ) \right]^{1/2}}{1 + \Lambda( z ) (c -  v) } > 0 \,.
\end{align*}
As a consequence, we have found that the symmetric operator $\bT$, which is nonnegative out of the diagonal $v = v'$, possesses a positive eigenvector $W$ associated with the eigenvalue 0. Therefore it is a nonpositive operator. This ends the proof of the Lemma.
\end{proof}

We can now conclude the proof of Theorem \ref{LinStability}. Lemma \ref{eigens} claims that for all $w \in L^2\left( \R \right)$ such that $u = e^\phi w$ is solution to the linearized equation, we have
\begin{equation*}
\frac{d}{dt} \left( \frac12 \int_{\R \times V}  \left\vert w \right\vert^2  d z  dv \right) + \int_{\R \times V} A( z ,v) \left\vert w \right\vert^2 d  z  dv \leq 0\,.
\end{equation*}
which proves the Proposition.
\end{proof}

\begin{rmq}[Non optimality of the weight]\label{rk:phi not optimal}
We believe that the weight $\exp(\phi( z ,v))$ proposed in Definition \ref{eq:def weight phi} is not optimal with respect to the spectral property of the linearized operator \eqref{kinmf}. Indeed the dissipation factor $A( z ,v)$ is equivalent in the diffusion limit ($r\to r \eps^2$) to $r\eps^2\rho_f( z )$, although we expect $2 r\eps^2 \rho_f( z )$ \cite{Kirchgassner92, Cuesta12}. The missing factor $2$ is responsible for the restriction $\gamma>1/2$ in our nonlinear stability result, Corollary \ref{NonLinStability}. 

Let us recall how to derive the spectral properties of the linearized equation in the diffusive limit, namely the linearized Fisher-KPP equation, 
\begin{equation}\label{eq:KPP linearized} \partial_t u - c\partial_ z  u - D \partial_{ z  z } u = r (1 - 2\rho_f) u\, ,  \end{equation}
where $\rho_f( z )$ is the profile of the travelling wave in the frame $ z  = x - ct$. Applying the same procedure as in the proof of Theorem \ref{LinStability}, we shall derive an equation for the weighted perturbation $w = e^{-\phi}u$, and optimize the dissipation with respect to the weight $\phi$ (see also \cite{Bouin-Calvez-Nadin}), as follows
\begin{align*}
&\partial_t w - c \partial_ z  w - D\partial_{ z  z } w - 2D \partial_ z \phi \partial_ z  w - D\partial_{ z  z } \phi w - (c \partial_ z  \phi + D|\partial_ z  \phi|^2 ) w = r(1 - 2\rho_f) w \\
& \dfrac d{dt}\left( \frac12 \int_\R |w|^2\, d z  \right) + D \int_\R |\partial_ z  w|^2\, d z  + \int_\R\left( 2r\rho_f  - r -c \partial_ z  \phi - D |\partial_ z  \phi|^2 \right) |w|^2 \, d z  = 0
\end{align*}
The best choice is achieved when $\partial_ z  \phi$ is constant and minimizes $r + c\lambda + D\lambda^2$, {\em i.e. $\partial_ z  \phi = - c/(2D)$}. In the case of the minimal speed $c = c^* = 2\sqrt{rD}$, we obtain the following dissipation formula for the linearized operator,
\begin{equation}\label{eq:dissip diff} \dfrac d{dt}\left( \frac12 \int_\R |w|^2\, d z  \right) + D \int_\R |\partial_ z  w|^2\, d z  + \int_\R 2r\rho_f  |w|^2 \, d z  = 0\, . \end{equation}
Notice the factor $2$ which is apparently missing in the dissipation term \eqref{eq:dissipation linear}. 

%
%

A systematic way to find the correct weight is to derive the eigenvectors  of the operator and its dual, then to use the framework of relative entropy (see \cite{Michel-Mischler-Perthame-JMPA} for a general presentation). This was done by Kirchg\"assner \cite{Kirchgassner92} who derived the so-called {\em eichform} for  \eqref{eq:KPP linearized}. The linearized operator ${\cal L}(u) = - c\partial_ z  u - \partial_{ z  z } u - (1 - 2\rho_f) u$ possesses obviously the nonpositive eigenvector $\eta = \partial_ z  \rho_f$, ${\cal L}(\eta) = 0$. The dual operator ${\cal L^*}(\varphi) = + c\partial_ z  \varphi - \partial_{ z  z } \varphi - (1 - 2\rho_f) \varphi$ possesses the nonpositive eigenvector $\psi =  \partial_ z  \rho_f e^{c z }$, ${\cal L^*}(\psi) = 0$, as can be checked by direct calculation. Therefore the relative entropy identity for the convex function $H(p) = \frac12|p|^2$ writes for the linearized system as follows,
\[ \dfrac d{dt}\left( \frac12 \int_\R \psi( z )  \left(\dfrac{u(t, z )}{\eta( z )} \right)^2 \eta( z )\, d z    \right) + \int_\R \psi( z ) \left|\dfrac{\partial}{\partial  z }\left(\dfrac{u(t, z )}{\eta( z )}\right) \right|^2 \eta( z )\, d z  = 0\, ,\]
which is equivalent to \eqref{eq:dissip diff} after straightforward computation (recall $w = e^{(c/2D) z }u$).

A similar strategy could be performed here: the linearized operator ${\cal L}(u) =  (v - c) \partial_{ z } u + \left( 1 + r \rho_f \right) u - \left( (1+r) M  - r f   \right) \int_{V} u' dv'$ possesses the nonpositive eigenvector $\eta = \partial_ z   f$, ${\cal L}(\eta) = 0$ (recall $ z \mapsto  f( z ,v)$ is nonincreasing). To derive the corresponding relative entropy identity, we should find an eigenvector $\psi$ in the nullset of the dual operator
\[ {\cal L^*}(\varphi) =  - (v - c) \partial_{ z } \varphi + \left( 1 + r \rho_f \right) \varphi -  \int_{V} \left( (1+r) M'  - r f'   \right) \varphi' dv'\, .  \]
Existence of such an eigenvector would follow from the Krein-Rutman Theorem. However we were not able to find an explicit formulation of $\psi$, and thus of the dissipation, which is necessary to derive a quantitative nonlinear stability estimate such as Corollary \ref{NonLinStability}. This is the reason why we stick to the weight proposed in Definition \ref{eq:def weight phi} although we believe it is not the optimal one. 
\end{rmq}


\subsection{Nonlinear stability by a comparison argument.}

%
%
\begin{proof}[\bf Proof of Corollary \ref{NonLinStability}]
First, the comparison principle of Proposition \ref{prop-mp} and \eqref{initialcond} yield 
\begin{equation}\label{comparison}
\rho_u (t,z) \geq \left( \gamma - 1 \right) \rho_f(t,  z  ), \quad \forall (t, z ) \in \R_+ \times \R.
\end{equation}
Now, we write the nonlinear equation verified by the weighted perturbation $w = e^{-\phi}u$,
\begin{equation}\label{eq-uNL}
 \partial_t w +   (v -  c) \partial_{ z } w + \left(  (v -  c) \partial_ z \phi + 1 + r  \rho_f \right) w = 
\left( (1+r)M  -rf \right) \int_{V} e^{\phi' - \phi} w' dv' - r  w \rho_u\, ,
\end{equation}
and as for the linear stability problem we test \eqref{eq-uNL} against $w$:
\begin{multline*}
\frac{d}{dt} \left( \int_{\R \times V}  \frac{\left\vert w \right\vert^2}{2} d  z  dv \right) + \int_{\R \times V} 
\left(  (v -  c) \partial_ z \phi + 1 + r  \rho_f \right) \left\vert w \right\vert^2 d  z  dv \\ 
= \int_{\R \times V \times V'} w \left( ( 1 + r  ) M - r   f \right) e^{\phi' - \phi} w' dz dvdv'  - \int_{\R \times V} r  |w|^2 \rho_u d z  dv\,.
\end{multline*}
Using \eqref{comparison} we deduce 
%
%
%
\begin{multline*}
\frac{d}{dt} \left( \int_{\R \times V}  \frac{\left\vert w \right\vert^2}{2} d  z  dv \right) + \int_{\R \times V} \left(  (v -  c) \partial_ z \phi + 1 + \gamma r  \rho_f \right) \left\vert w \right\vert^2 d  z  dv \\ \leq \int_{\R \times V \times V'} w \left( ( 1 + r  ) M  - r   f \right) e^{\phi' - \phi} w' dvdv'd z .
\end{multline*}
This last equation is very similar to \eqref{kinNRJ}. Following the same steps as in the proof of Theorem \ref{LinStability}, we find that using the same weight $\phi$ and setting 
\[ A( z  , v) = \frac{r}{2} \left( (2 \gamma - 1) \rho_f + \frac{ f}{(1 + r ) M(v) - r   f} \right)\, ,\]
we obtain an estimate very similar to \eqref{eq:dissipation linear},
\begin{equation}\label{nonlinest}
\frac{d}{dt} \left( \frac12\int_{\R \times V} \left\vert u \right\vert^2 e^{- 2 \phi( z ,v)} d  z  dv \right) + \int_{\R \times V} \frac r2 \left[  ( 2 \gamma - 1 ) \rho_f + \frac{ f}{M(v) + r \left( M(v) -  f \right)} \right] \left\vert u \right\vert^2 e^{- 2 \phi( z ,v)} d  z  dv \leq 0\,.
\end{equation}
%
%
\end{proof}


\section{Numerics} \label{sec:numerics}

In this Section, we show the outcome of numerical simulations to illustrate our results, and to motivate the last Section about accelerating fronts. We use  a simple explicit numerical scheme for approximating \eqref{eq:kinKPP}. The free transport operator is discretized using an upwind scheme.

We show in Figure \ref{fig:vborne} the expected asymptotic behavior when the velocity space is bounded. The solution of the Cauchy problem converges towards  a travelling front with minimal speed. 

Next we  investigate  the case  $V = \R$. 
Of course, numerical simulations require that the support of $M$ is truncated. We opt  for the following strategy: the velocity set is truncated $V_A = [-A,A]$, and the distribution $M$ is renormalized accordingly. For any $A>0$ we observe the asymptotic regime of a travelling front with finite speed, as expected. However, the  asymptotic spreading speed diverges as $A\to +\infty$. In fact, we observe that the envelope of the spreading speed scales approximately as $\langle c\rangle = \mathcal O ( t^{1/2} )$. Hence the front is accelerating like the power law $\langle x\rangle = \mathcal O (t^{3/2} )$. 

 \begin{figure}
 \begin{center}
 \includegraphics[width = \linewidth,height = 0.4\linewidth]{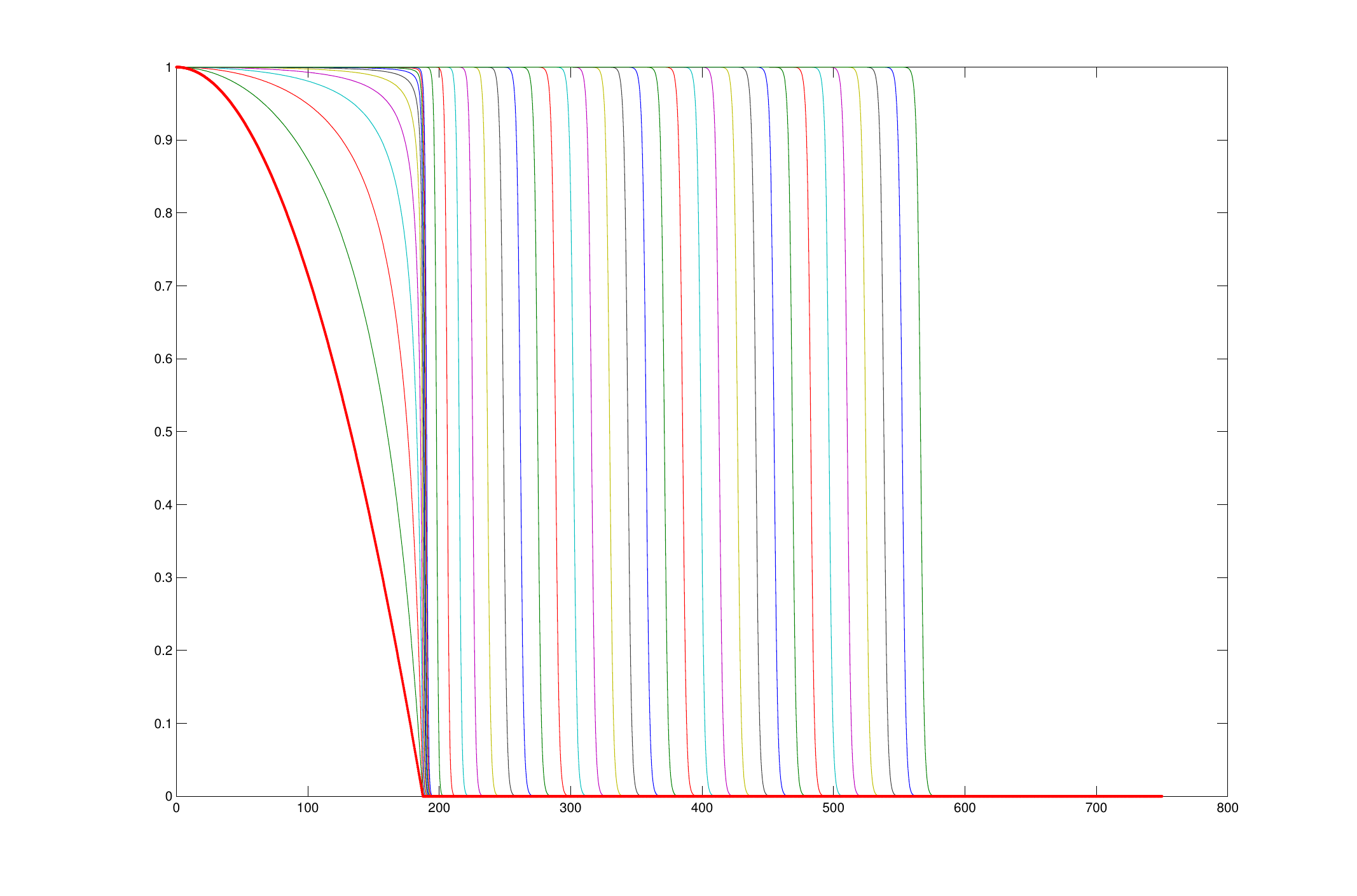}
 \caption{Numerical simulation of equation \eqref{eq:kinKPP} with initial datum being chosen as  $g^0(x<0,v) = M(v)$ and $g^0(x>0,v) = M(v) \left( 1 - \alpha x ^2 \right)_+$. The density distribution $M(v)$ is a truncated Gaussian function on a compact velocity set. We plot the evolution of the macroscopic density $\rho_g$ (initial condition in red bold). After short time the density has accumulated towards a steep profile. Then the front starts to propagate with constant speed. \label{fig:vborne}}
 \end{center} 
 \end{figure}

%
%
 \begin{figure}
 \begin{center}
 \includegraphics[width = .49\linewidth]{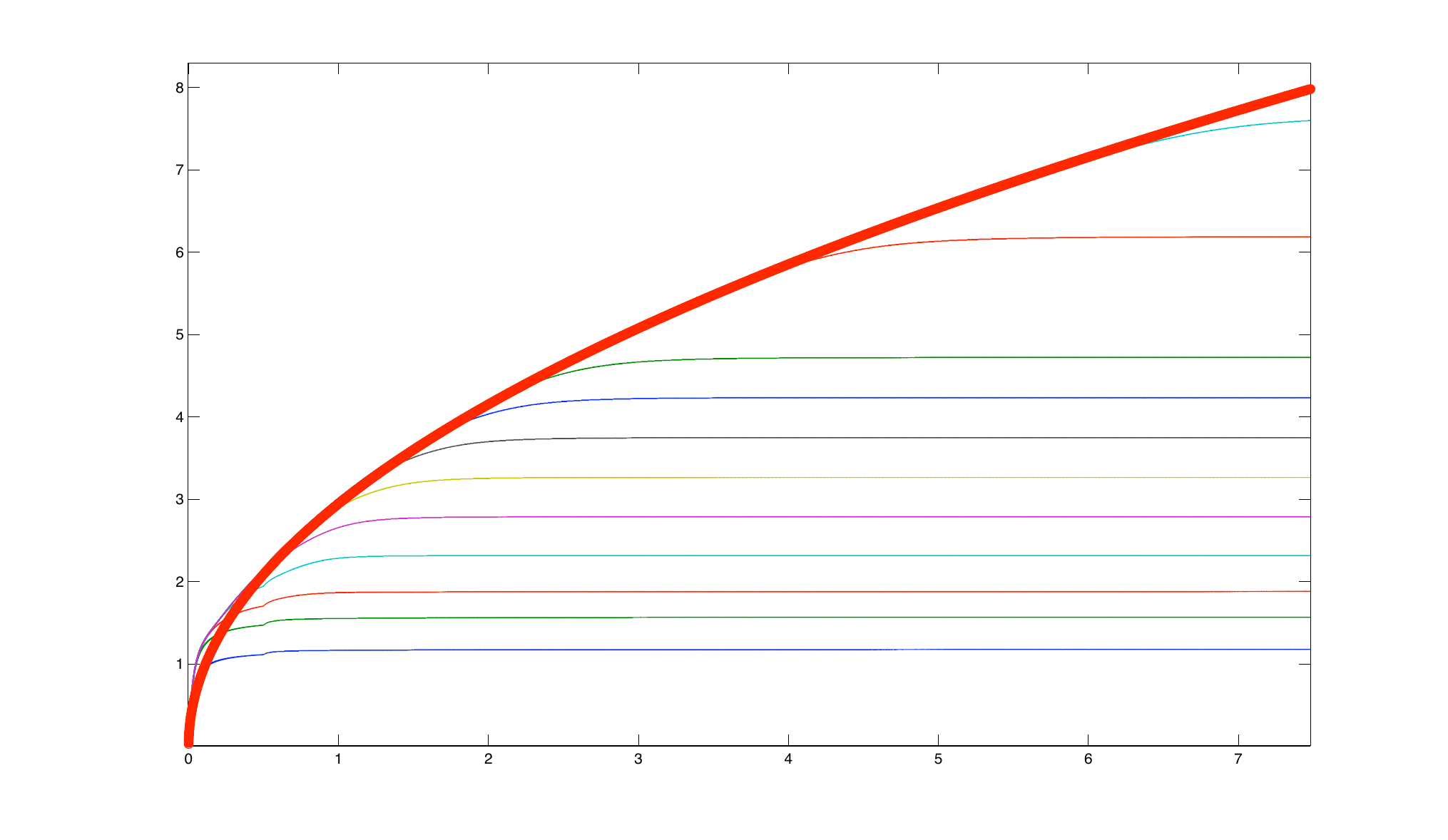} \; 
 \includegraphics[width = .49\linewidth]{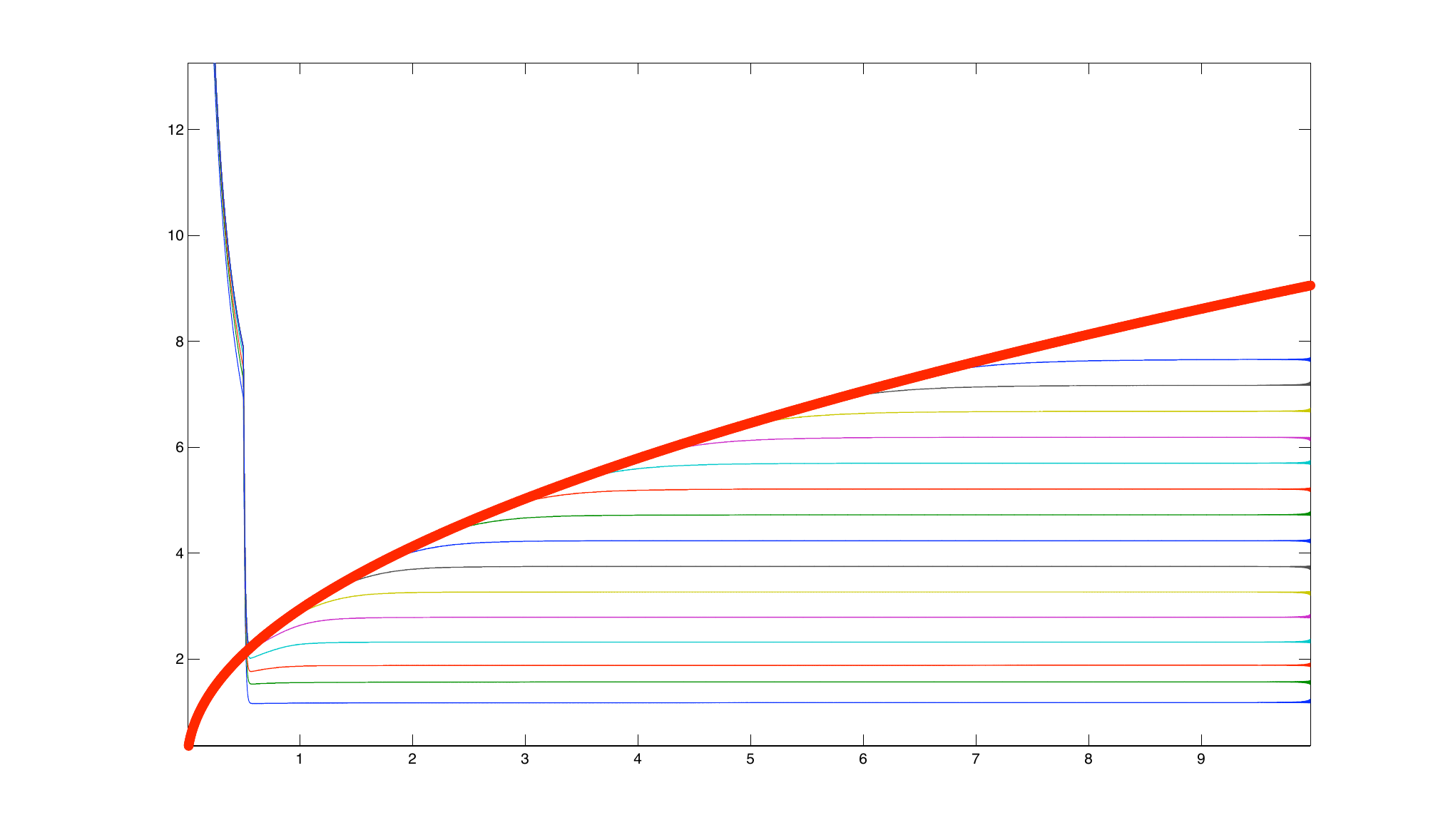}
 \caption{Numerical simulations of equation \eqref{eq:kinKPP} with initial datum being chosen as (left) $g^0(x<0,\cdot) = M(\cdot)$ and $g^0(x>0,\cdot) = 0$, and (right) the same initial condition as in Figure \ref{fig:vborne}. The distribution $M$ is a Gaussian function. Each plot corresponds to the evolution of speed of the front for some truncation $V = [-A,A]$, for (left) $A = [(1:9),15,20]$, and (right) $A = (1:15)$. The curves are ordered from bottom to top: the speed of the front increases with $A$. We plot in red bold the function $t\mapsto t^{1/2}$. We observe that it fits very well with the envelop of the family of curves. As a consequence, the front propagation scales approximately as $\langle x\rangle = \mathcal O ( t^{3/2} )$.   \label{fig:vnonborne}
 }
 \end{center}
 \end{figure}
 
 \begin{figure}
 \begin{center}
 \includegraphics[width = \linewidth,height = 0.4\linewidth]{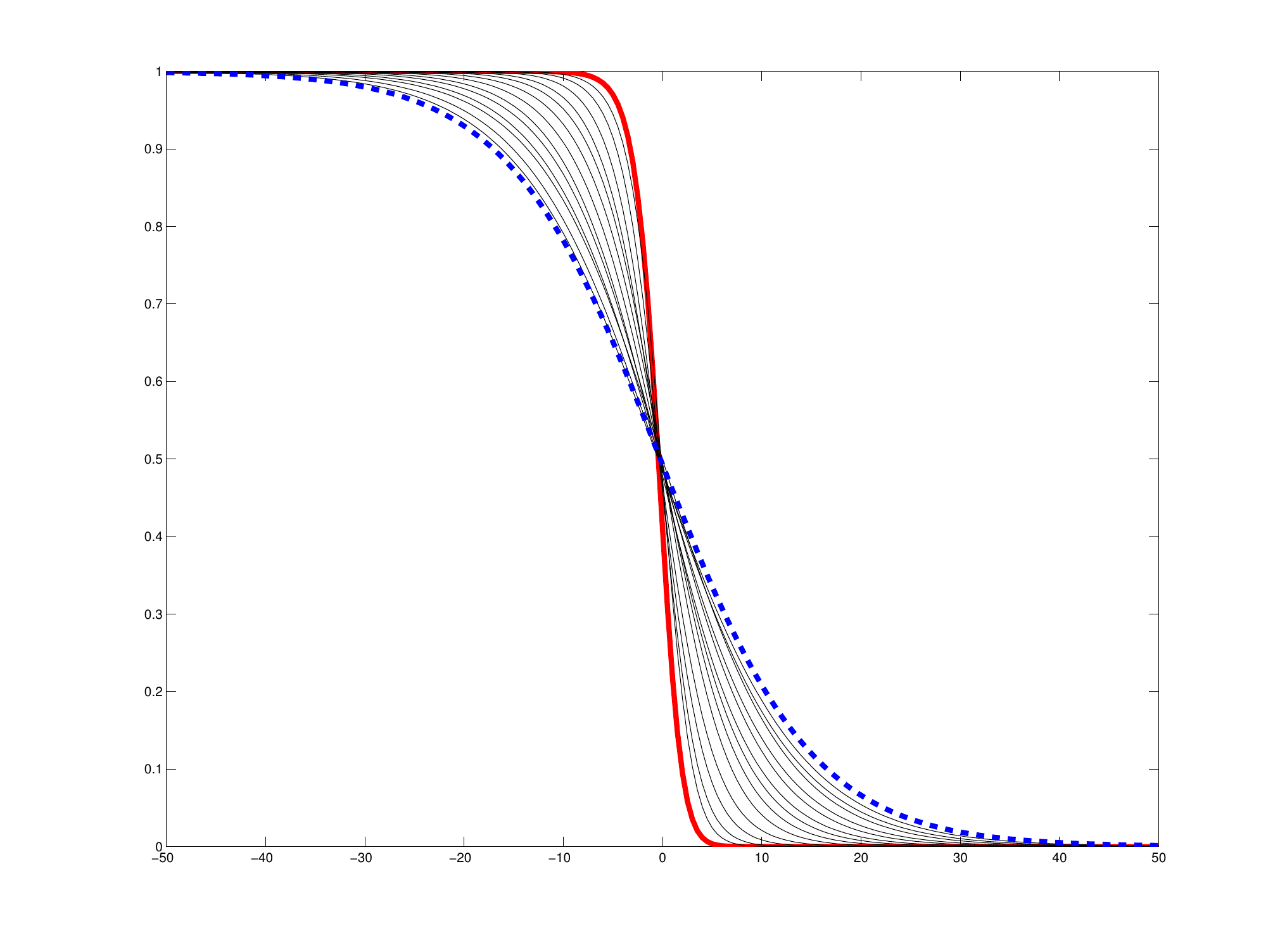}
 \caption{Same numerical simulation as Figure \ref{fig:vnonborne} with the same initial datum as in Figure \ref{fig:vborne}. We superpose various macroscopic profiles $\rho_g$ obtained after long time simulations of the scheme, for different truncation levels $A = (1:15)$. Time   is the same for all profiles. It is sufficiently large to guarantee that we have reached the asymptotic regime (Figure \ref{fig:vnonborne}, right). All profiles are translated such that $\rho_g(T,0) = \frac12$.   We observe that the exponential decay is monotonically decreasing with $A$. This indicates that the solution corresponding to $V = \R$ should  flatten when $t \to \infty$. }
 \end{center}
 \end{figure}


\section{Superlinear spreading and accelerating fronts ($V=\R$)}

We assume in this Section that $V=\R$ and that $M(v) > 0$ for all $v \in \R$. We prove superlinear spreading. We deduce as a Corollary that there cannot exist a travelling wave solution of \eqref{eq:kinKPP}. We also give some quantitative features about the spreading of the density \eb{when $M$ is a Gaussian distribution. In accordance with numerical simulations, we prove a sharp spreading rate, namely $\mathcal{O}\left(t^{3/2}\right)$. To this end, we construct explicit sub- and supersolutions from which we  estimate the spreading (respectively from below and above). 
} 

Before we go to the proof, let us give some heuristics concerning the superlinear spreading rate. Reaction-diffusion fronts with KPP nonlinearity are {\em pulled fronts}: the spreading rate is determined by the dynamics of small populations at the far edge of the front. In the kinetic model with unbounded velocities, individuals with arbitrary large speeds go at the far edge of the front. There, their density grows exponentially, and pull the accelerating front. 


\subsection{\eb{Nonexistence} of travelling waves and superlinear spreading}

\begin{proof}[\bf Proof of Proposition \ref{prop:spreadingunbounded}.]

Let $\underline{A}>0$ so that $(1+r)\int_{-\underline{A}}^{\underline{A}} M(v)dv=1$. For all $A> \underline{A}$ we define the renormalized truncated kernel and the associated growth rate,
$$M_A (v) = \frac{1_{[-A,A]}(v)}{\int_{-A}^A M} M(v)  \quad \hbox{ and } \quad r_A =(1+r)\int_{-A}^A M(v)dv  -1 \in (0, r)\, .$$
As $M_A$ is compactly supported, we can apply the results proved when $V$ is bounded in order to construct appropriate subsolutions. 

Before we proceed with subsolutions we investigate the dispersion relation in the limit $A\to +\infty$. 
Define for all $c \in (0,A)$ and $\lambda \in \left(0, 1/ (A-c)\right)$:
$$I_A (\lambda;c)= (1+r_A)\int_{\R} \frac{M_A(v)}{1+\lambda (c-v)}dv = (1+r) \int_{-A}^A \frac{M(v)}{1+\lambda (c-v)} dv$$
and $c^*_A$ the corresponding minimal speed defined in Lemma \ref{minlambda}.

\begin{lem}\label{infspeed}
One has $\lim_{A\to +\infty} c^*_A =+\infty$. 
\end{lem}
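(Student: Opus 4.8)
The plan is to argue by contradiction. Assume that $c^*_A$ does not tend to $+\infty$ as $A\to+\infty$; then there are a sequence $A_n\to+\infty$ and a constant $C>0$ with $c^*_n:=c^*_{A_n}\le C$ for every $n$. The first task is to produce, for each $n$, a valid pair $(\lambda_n,c^*_n)$ solving the dispersion relation for the truncated kernel $M_{A_n}$. Since $M>0$ everywhere, $M_{A_n}$ does not vanish at $\pm A_n$, hence $M_{A_n}(v)/(A_n-v)\notin L^1([-A_n,A_n])$; the ``non-integrable'' branch of Step 3 in the proof of Proposition \ref{propdisp}, together with Lemma \ref{minlambda}, then applies to $M_{A_n}$ and yields $\lambda_n:=\lambda_{c^*_n}\in\bigl(0,\tfrac{1}{A_n-c^*_n}\bigr)$ with $I_{A_n}(\lambda_n;c^*_n)=1$. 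Here $\lambda_n>0$ because $I_{A_n}(0;c^*_n)=1+r_{A_n}>1$, and $\lambda_n<\tfrac{1}{A_n-c^*_n}$ because $I_{A_n}(\cdot;c^*_n)$ is only defined, as a convergent integral with the right sign, on $\bigl[0,\tfrac{1}{A_n-c^*_n}\bigr)$.

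The first key step is the observation that $\lambda_n\to0$: indeed $\lambda_n(A_n-c^*_n)<1$ and $A_n-c^*_n\ge A_n-C\to+\infty$, so $\lambda_n<1/(A_n-C)\to0$. The decisive step is then to pass to the limit in the identity
\[1=I_{A_n}(\lambda_n;c^*_n)=(1+r)\int_\R \frac{M(v)\,\mathbf{1}_{[-A_n,A_n]}(v)}{1+\lambda_n(c^*_n-v)}\,dv\]
by Fatou's lemma. The integrand is nonnegative — the inequality $\lambda_n<1/(A_n-c^*_n)$ is exactly what makes $1+\lambda_n(c^*_n-v)>0$ for $v\in[-A_n,A_n]$ — and for each fixed $v$ we have $\mathbf{1}_{[-A_n,A_n]}(v)\to1$ and $1+\lambda_n(c^*_n-v)\to1$ (since $\lambda_n\to0$ and $(c^*_n)_n$ is bounded), so the integrand tends pointwise to $M(v)$. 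Fatou's lemma therefore gives
\[1=\liminf_{n\to+\infty}I_{A_n}(\lambda_n;c^*_n)\ \ge\ (1+r)\int_\R M(v)\,dv=1+r,\]
which is absurd since $r>0$. Hence $c^*_A\to+\infty$.

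The only genuinely delicate point is the first step, i.e. knowing that the minimal-speed exponent $\lambda_n$ sits strictly inside the admissible interval $(0,1/(A_n-c^*_n))$; this is precisely the positivity constraint on the profile $F_{\lambda_n}$ in Proposition \ref{propdisp}, and it is available here because the positivity of $M$ excludes the degenerate situation at the endpoints. Everything else is a routine application of Fatou's lemma, with no estimate uniform in $A$ needed beyond the pointwise limit of the integrand. Alternatively, for a kernel of infinite variance such as the Cauchy law, the conclusion follows at once from Proposition \ref{prop:bound on c*}(c), whose lower bounds on $c^*(M_{A_n})$ blow up together with $D_{A_n}=\int_{-A_n}^{A_n}v^2M_{A_n}(v)\,dv\to+\infty$; the Fatou argument is what also covers the Gaussian, or any finite-variance, case.
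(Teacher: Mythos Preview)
Your proof is correct and follows essentially the same argument as the paper: both argue by contradiction, observe that boundedness of $c^*_{A_n}$ forces $\lambda_{A_n}\to 0$ via the constraint $\lambda_{A_n}<1/(A_n-c^*_{A_n})$, and then apply Fatou's lemma to the dispersion relation to obtain the absurdity $1\ge 1+r$. You supply more justification than the paper does (in particular the remark that $M>0$ puts you in the non-integrable branch of Proposition~\ref{propdisp}, and the alternative variance-based route for heavy-tailed kernels), but the core idea is identical.
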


\begin{proof}[\bf Proof of Lemma \ref{infspeed}]
For all $A>\underline{A}$, let $\lambda_A \in \left( 0, 1/ (A-c^*_A)\right)$ such that 
\begin{equation} \label{eq:divcA} I_A (\lambda_A;c^*_A)= (1+r) \int_{-A}^A \frac{M (v)  }{ 1+ \lambda_A (c^*_A-v)} dv=1. \end{equation} 
If $c^*_A$ does not diverge to $+\infty$ as $A\to +\infty$, then 
it is bounded along a sequence $(A_n)_n$ and one has $\lim\lambda_{A_n} = 0$ simply by comparison $\lambda_{A_n}\leq 1/ (A_n-c^*_{A_n})$. Applying Fatou's lemma to \eqref{eq:divcA}, one gets
$$(1+r) \int_\R \liminf_{n\to+\infty} \frac{M (v) 1_{(-A_n,A_n)} (v) }{ 1+ \lambda_{A_n} (c^*_{A_n}-v)} dv = 
(1+r) \int_\R M(v)dv = 1+r \leq \liminf_{ n\to +\infty} I_{A_n}(\lambda_{A_n},c^*_{A_n}) = 1\,,$$
a contradiction. 
\end{proof}

Let $g_A$ the solution of 
\begin{equation} \label{eq:uA} \left\{ \begin {array}{ll}
\partial_t g_A + v\partial_x g_A = M_A (v) \rho_{g_A}-g_A + r_A  \rho_{g_A} \left( M_A (v) - g_A\right) \quad &\hbox{in } \R_+ \times \R\times [-A,A],\smallskip\\
g_A (0,x,v) = g^0 (x,v) \quad &\hbox{in } \R\times [-A,A],
                                       \end{array}\right. 
\end{equation}
and $\widetilde{g}_A = \frac{r_A}{r} g_A$. 
Clearly, $M_A (v) \leq \frac{M(v)}{\int_{-A}^A M}$ for all $v\in V$. Hence, multiplying \eqref{eq:uA} by $\frac{r_A}r$, we get
\begin{align*} 
\partial_t \widetilde{g}_A + v\partial_x \widetilde{g}_A 
&\leq  \frac{M(v)}{\int_{-A}^A M} \rho_{\widetilde{g}_A}-\widetilde{g}_A 
+ r_A  \rho_{\widetilde{g}_A} \left( \frac{M(v)}{\int_{-A}^A M} - g_A\right)\\
   &\leq  (1+r_A) \frac{M(v)}{\int_{-A}^A M} \rho_{\widetilde{g}_A}-\widetilde{g}_A - r_A  \rho_{\widetilde{g}_A} g_A\\
&= (1+r)M(v) \rho_{\widetilde{g}_A}-\widetilde{g}_A - r_A  \rho_{\widetilde{g}_A} g_A\\
&=(1+r)M(v) \rho_{\widetilde{g}_A}-\widetilde{g}_A - r  \rho_{\widetilde{g}_A} \widetilde{g}_A \\
&= M(v) \rho_{\widetilde{g}_A} - \widetilde{g}_A + r \rho_{\widetilde{g}_A} \left(M(v) - \widetilde{g}_A\right).
  \end{align*} 
Extending $\widetilde{g}_A$ by $0$ outside of $\R_+ \times \R\times [-A,A]$, as $\widetilde{g}_A (0,x,v) =\frac{r_A}{r}g^0 (x,v) \leq g^0 (x,v)$, 
we get that $\widetilde{g}_A$ is a subsolution of \eqref{eq:kinKPP} 
and it follows from the maximum principle stated in Proposition \ref{prop-mp} that $g(t,x,v) \geq \widetilde{g}_A (t,x,v)$  for all 
$(t,x,v) \in \R_+ \times \R\times \R$. 

On the other hand, we know from Proposition \ref{prop:spreadingbounded} that for all $c<c^*_A$:
$$\lim_{t\to +\infty} \left( \sup_{x\leq ct} |M_A (v) -\widetilde{g}_A (t,x,v)|\right)=\lim_{t\to +\infty} \left( \sup_{x\leq ct} \left(M_A (v) -\widetilde{g}_A (t,x,v)\right)\right)=0.$$ 
Hence, as $M(v)\geq g(t,x,v) \geq \widetilde{g}_A (t,x,v)$
and $M_A (v) \geq M(v)$ for all $v\in [-A,A]$, one gets for all $v\in [-A,A]$:
$0\leq \lim_{t\to +\infty} \left(\sup_{x\leq ct} \left(M (v) -g (t,x,v)\right)\right)\leq 0$. 
Therefore we conclude
$$\lim_{t\to +\infty} \left( \sup_{x\leq ct}  |M(v) - g(t,x,v)|\right) =0 \quad \hbox{ for all } c<c^*_A \hbox{ and } A>\underline{A}.$$
the conclusion follows from the fact that $\lim_{A\to +\infty} c^*_A=+\infty$. 
\end{proof}

\subsection{Upper bound for the spreading rate \eb{in the gaussian case}}

\label{sec:supersolution}

We construct below supersolutions for  \eqref{eq:kinKPP} when  $V = \R$ and $M$ is a Gaussian distribution. 

\begin{prop}\label{supersolunbounded}
Let $V=\R$ and $M(v) = \frac1{\sigma \sqrt{2 \pi}} \exp \left( - \frac{v^2}{2 \sigma^2} \right) $. For $1\leq b \leq a$ define
\begin{equation}\label{eq:rho and f0}
\rho(t,x) = M \left( \frac{x}{t+a} \right) e^{r(t+a)}\quad \text{and} \quad g^0(x,v) = \frac{1}{b} M\left( \frac{x}{b} \right) M(v) e^{ra}\,.
\end{equation} 
Let $g$ be defined by  
\begin{equation*}
g(t,x,v) = g^0(x-vt,v) e^{-t} + \int_{0}^{t} (1+r) M(v) \rho(s,x-v(t-s)) e^{-(t-s)} ds \,.
\end{equation*}
Then 
$\overline{g}(t,x,v) = \min \left\lbrace M(v) \, , g(t,x,v) \right\rbrace
$
is a supersolution of \eqref{eq:kinKPP}, that is:
\begin{equation*}
\partial_{t} \overline{g} + v \partial_x \overline{g} \geq \left( M(v) \rho_{\overline{g}} - \overline{g} \right) + r \rho_{\overline{g}} \left( M(v)- \overline{g} \right), \quad (t,x,v) \in \R_+ \times \R \times V\, .
\end{equation*}
\end{prop}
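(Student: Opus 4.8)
The starting observation is that the function $g$ in the statement is exactly the Duhamel representation of the solution of the \emph{linear} transport equation
\[ \partial_t g + v\partial_x g + g = (1+r)\,M(v)\,\rho(t,x)\,, \qquad g(0,\cdot,\cdot)=g^0\,, \]
in which $\rho$ is the \emph{prescribed} function of \eqref{eq:rho and f0}. First I would check this by integrating along the characteristics $x\mapsto x+vt$: after multiplication by $e^{t}$ and integration in time one recovers precisely the displayed formula for $g$, so that $g$ is a distributional solution of $\partial_t g+v\partial_x g=(1+r)M(v)\rho-g$. Since $g^0\ge 0$ and $\rho\ge 0$, every term in the Duhamel formula is nonnegative, hence $g\ge 0$ and $\rho_g=\int_V g\,dv\ge 0$.

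Next I would reduce the supersolution property of $\overline g=\min\{M,g\}$ to the single scalar inequality $\rho_g(t,x)\le\rho(t,x)$. On the open set $\{g<M\}$ one has $\overline g=g$ and $\rho_{\overline g}\le\rho_g\le\rho$, so
\[ \partial_t\overline g+v\partial_x\overline g=(1+r)M\rho-g\;\ge\;(1+r)M\rho_{\overline g}-g-r\rho_{\overline g}g=(M\rho_{\overline g}-\overline g)+r\rho_{\overline g}(M-\overline g)\,, \]
the middle inequality reading $(1+r)M(\rho-\rho_{\overline g})+r\rho_{\overline g}g\ge 0$, which holds since all factors are nonnegative. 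On $\{g>M\}$ one has $\overline g=M$, $\partial_t\overline g+v\partial_x\overline g=0$, and $(M\rho_{\overline g}-M)+r\rho_{\overline g}(M-M)=M(\rho_{\overline g}-1)\le 0$ because $\rho_{\overline g}\le\int_V M=1$. As $\overline g$ is continuous, the two inequalities glue across the interface $\{g=M\}$ by the jump formula, exactly as in the proof of Lemma \ref{lem-super}; together with the identity $\partial_t M+v\partial_x M=0$ this shows $\overline g$ is a supersolution of \eqref{eq:kinKPP}.

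It thus remains to prove $\rho_g\le\rho$, which is the crux and the place where the precise choices in \eqref{eq:rho and f0} enter. Integrating the Duhamel formula in $v$ and repeatedly using that the convolution of two centred Gaussians is Gaussian, I would obtain
\[ \int_V g^0(x-vt,v)\,dv=\frac{e^{ra}}{\sqrt{b^2+t^2}}\,M\!\left(\frac{x}{\sqrt{b^2+t^2}}\right),\qquad \int_V M(v)\,\rho(s,x-v(t-s))\,dv=\frac{(s+a)\,e^{r(s+a)}}{\sqrt{(s+a)^2+(t-s)^2}}\,M\!\left(\frac{x}{\sqrt{(s+a)^2+(t-s)^2}}\right). \]
Since $M$ is even and nonincreasing in $|x|$, and since $(s+a)^2+(t-s)^2=(t+a)^2-2(s+a)(t-s)\le(t+a)^2$ for $s\in[0,t]$ while $\tfrac{s+a}{\sqrt{(s+a)^2+(t-s)^2}}\le1$, the second integral is $\le e^{r(s+a)}M(x/(t+a))$; multiplying by $(1+r)e^{-(t-s)}$ and integrating over $s\in[0,t]$ gives the contribution $(e^{r(t+a)}-e^{ra-t})\,M(x/(t+a))$. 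For the first term, $b\ge1$ and $b^2+t^2\le a^2+t^2\le(t+a)^2$ yield $\tfrac{1}{\sqrt{b^2+t^2}}M(x/\sqrt{b^2+t^2})\le M(x/(t+a))$, so $e^{-t}\int_V g^0(x-vt,v)\,dv\le e^{ra-t}M(x/(t+a))$. Adding the two bounds produces $\rho_g(t,x)\le e^{r(t+a)}M(x/(t+a))=\rho(t,x)$, as claimed.

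The only genuinely delicate point is this last step: one must track the variances through the nested Gaussian convolutions and verify that the elementary quadratic inequalities $(s+a)^2+(t-s)^2\le(t+a)^2$ and $b^2+t^2\le(t+a)^2$ (together with $b\ge1$) furnish exactly the slack needed to absorb the normalising prefactors $\tfrac{s+a}{\sqrt{(s+a)^2+(t-s)^2}}$ and $\tfrac{1}{\sqrt{b^2+t^2}}$. Everything else — the Duhamel identity, nonnegativity of $g$, the algebraic comparison of the two nonlinearities, and the gluing at the interface — is routine, the latter being imported verbatim from Lemma \ref{lem-super}.
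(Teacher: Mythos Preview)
Your proof is correct and follows essentially the same route as the paper: the Duhamel identity $\partial_t g+v\partial_x g+g=(1+r)M\rho$, the reduction of the supersolution property to the scalar inequality $\rho_g\le\rho$, and the verification of this inequality via the Gaussian convolution identities together with the elementary bounds $(s+a)^2+(t-s)^2\le(t+a)^2$ and $b^2+t^2\le(t+a)^2$ (using $1\le b\le a$). Your treatment of $\overline g=\min\{M,g\}$ on the two regions is slightly more explicit than the paper's one-line appeal to ``minimum of two supersolutions'', but the argument is the same in substance.
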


\begin{proof}[\bf Proof of Proposition \ref{supersolunbounded}]
We shall prove that $g$ is a supersolution of \eqref{eq:kinKPP}. Indeed, it will follow that $\overline{g}$ is a supersolution since it is the minimum of two supersolutions.
From the Duhamel formula, we deduce that
\begin{equation*}
\partial_{t} g + v \partial_x g + g = (1+r) M(v) \rho ,
\end{equation*}
To prove that $g$ is a subsolution we must prove in fact   that 
\begin{equation*}
(1+r) M(v) \rho  \geq (1+r)M(v)  \rho_{g}  - r \rho_{g} g .
\end{equation*}
This is sufficient to prove that  the inequality $ \rho  \geq  \rho_{g} $ holds true.
Computing the expression of $\rho_{g}$ we obtain
\begin{equation*}
\rho_{g} (t,x) = \underbrace{ \int_V g^0(x-vt,v) e^{-t} dv }_{= A}  + \underbrace{ \int_{0}^{t} (1+r) e^{-(t-s)+r(s+a)} \int_V M(v) M\left( \frac{x - v(t-s)}{s+a} \right) dv ds }_{ = B}
\end{equation*}
We first deal with the estimate of $B$. We claim the following inequality holds true: for all $x \in \R$ and $s \in [0,t]$,
\begin{equation}\label{eq:estimate Gaussian}
\int_V M(v) M\left( \frac{x - v(t-s)}{s+a} \right) dv \leq M\left( \frac{x}{t+a} \right)
\end{equation}
In fact one has
\begin{align*}
\displaystyle \int_V M(v) M\left( \frac{x - v(t-s)}{s+a} \right) dv 
& =  \displaystyle \int_V \frac{1}{2\pi \sigma^2} \exp{ \left( - \frac{v^2}{2\sigma^2}  -  \frac{ \left( \frac{x - v(t-s)}{s+a} \right)^2 }{2 \sigma^2}        \right) } dv\\
& =  \displaystyle \frac{1}{\sqrt{2\pi} \sigma} \frac{s+a}{\left[ (s+a)^{2} + (t-s)^2 \right]^{\frac12}} \exp \left( - \frac{1}{2 \sigma^2} \frac{x^2}{(s+a)^{2} + (t - s)^2} \right)\\
&\leq   \displaystyle \frac{1}{\sqrt{2\pi} \sigma}\exp \left( - \frac{1}{2 \sigma^2} \frac{x^2}{(t+a)^{2}} \right),
\end{align*}
since 
\begin{equation*}
\forall s \in \left[ 0 , t \right], \quad (t+a)^2 \geq (s+a)^2 + (t-s)^2 \geq (s+a)^2.
\end{equation*}
This yields
\begin{align*}
 B (t,x) &\leq  \displaystyle (1+r) \left( \int_{0}^{t}e^{-(t-s)+r(s+a)}ds \right) M\left( \frac{x}{t+a} \right) \\
&=  \displaystyle e^{ra - t} \left( e^{(1+r)t}-1\right) M\left( \frac{x}{t+a} \right) \\
&= \left(  e^{(1+r)t}-1\right) e^{-t+ra} \rho (t,x)e^{-r (t+a)}\\
&= \left( 1- e^{-(1+r)t}\right)  \rho (t,x)
\end{align*}
To estimate $A$, we plug in the formula for $\rho$ \eqref{eq:rho and f0},
\begin{equation*}
\left( \frac{ A }{ \rho } \right) (t,x) = \sqrt{2\pi} \sigma \exp \left( \frac{x^2}{2 \sigma^2 (t+a)^2}  - (1+r) t - r a  \right)  \int_V g^0(x-vt,v) dv \, .
\end{equation*}
We compute the last integral using the formula for the initial condition $g^0$ \eqref{eq:rho and f0},
\begin{equation*}
\int_V g^0(x-vt,v) dv = \frac{1}{\sqrt{2\pi} \sigma} \frac{1}{ \left( t^2 + b^2 \right)^{\frac12}} \exp \left( - \frac{1}{2 \sigma^2} \frac{x^2}{t^2 + b^2} \right) e^{ra},
\end{equation*}
Thus, for all $(t,x) \in \R_+ \times \R$:
\begin{equation}\label{eq:IC}
\left( \frac{ A }{ \rho } \right) (t,x) = \frac{1}{ \left( t^2 + b^2 \right)^{\frac12} } \exp \left(  - \frac{x^2}{2 \sigma^2 (t+a)^{2}} \left[ \frac{(t + a)^2}{t^2 + b^2} - 1  \right] \right)  \exp \left( -(1+r) t  \right) \leq \exp \left( -(1+r) t  \right)
\end{equation} 
as long as $b \geq1$ and $(t+a)^2 \geq  t^2 + b^2$, that is $a \geq  b \geq 1$.
This concludes the proof.
\end{proof}

\begin{proof}[\bf Proof of Theorem \ref{thm:unbounded}]

Let $\e>0$. For all $t \geq 0$, we define the zone 
\begin{equation*}
\Gamma_t = \left\lbrace x \in \R \, \vert \; \vert x \vert\geq \sigma \left( 1 + \eps \right)\sqrt{2r} (t+a)^{3/2} \right\rbrace.
\end{equation*}
From the definition of $\overline{g}$, we deduce that $\overline{g}$ is a supersolution such that $\rho_{\overline{g}} \leq \min \left( 1 , \rho_{g} \right) \leq \min \left( 1 ,  \rho \right)$. However, for all $t>0$ and $x \in \Gamma_t$, we have
\begin{equation*}
\rho(t,x) \leq \frac{1}{\sigma \sqrt{2 \pi}} \exp \left( - \frac{2 r \sigma^2 \left( 1 + \eps \right)^2 (t+a)^3 }{2 \sigma^2 (t+a)^2} + r (t+a)   \right) = \frac{1}{\sigma \sqrt{2 \pi}}  \exp \left( -  r(t+a) \left(   \left( 1 + \eps \right)^2 - 1 \right)    \right).
\end{equation*}
It yields that 
\begin{equation*}
\lim_{t \to + \infty}\left( \sup_{x \in \Gamma_t} \rho(t,x) \right) =  0.
\end{equation*}
\end{proof}

Computations are made easier above since the class of Gaussian distributions is stable by convolution. This is also the case for the class of Cauchy distributions. Therefore we are able to derive an inequality similar to \eqref{eq:estimate Gaussian} in the latter case. Let us comment this case. Because the distribution $M$ has an infinite variance, we learn from \cite{Mellet-Mouhot-Mischler} that the correct macroscopic limit leads to a nonlocal fractional Laplacian operator.
On the other hand, we expect from \cite{Cabre-Roquejoffre, Cabre-Roquejoffre2, Coulon-Roquejoffre} an exponentially fast propagation in the fractional diffusion regime. Similarly as for our previous results,  we can reasonably expect that the spreading rate is faster in the kinetic model than in the macroscopic limit. Therefore we expect a spreading rate faster than exponential. In fact the supersolution that we are able to derive confirms this expectation. 

\eb{In the following Proposition, we construct a supersolution that spreads with rate $\mathcal O(t e^{rt/2})$. However, we leave the complete analysis of spreading in the case of the Cauchy distribution for future work.}

\begin{prop}\label{supersolunbounded-cauchy}
Let $V=\R$ and $M(v) =  \frac{1}{\pi} \frac{\sigma}{\sigma^2+v^2}$. For $1\leq b \leq a - \frac14$, define
\begin{equation}\label{eq:rho and f0 Cauchy}
\rho(t,x) = M \left( \frac{x}{t+a} \right) e^{r(t+a)}\quad \text{and} \quad g^0(x,v) = \frac{1}{b} M\left( \frac{x}{b} \right) M(v) e^{ra}\,.
\end{equation} 
Let $g$ be defined by  
\begin{equation*}
g(t,x,v) = g^0(x-vt,v) e^{-t} + \int_{0}^{t} (1+r) M(v) \rho(s,x-v(t-s)) e^{-(t-s)} ds \,.
\end{equation*}
Then 
$\overline{g}(t,x,v) = \min \left\lbrace M(v) \, , g(t,x,v) \right\rbrace
$
is a supersolution of \eqref{eq:kinKPP}, that is:
\begin{equation*}
\partial_{t} \overline{g} + v \partial_x \overline{g} \geq \left( M(v) \rho_{\overline{g}} - \overline{g} \right) + r \rho_{\overline{g}} \left( M(v)- \overline{g} \right), \quad (t,x,v) \in \R_+ \times \R \times V\, .
\end{equation*}
\end{prop}

\begin{proof}[\bf Proof of Proposition \ref{supersolunbounded-cauchy}]
The proof is the same as for Proposition \ref{supersolunbounded}. We just show the main computations in the case of the Cauchy distribution. 
To prove \eqref{eq:estimate Gaussian} we use the residue method, 
\begin{align*}
\displaystyle \int_V M(v) M\left( \frac{x - v(t-s)}{s+a} \right) dv & =   \displaystyle \int_V \frac{\sigma^2}{\pi^2} \frac{1}{\sigma^2 +v^2} \cdot \frac{1}{\sigma^2 + \left( \frac{x - v(t-s)}{s+a} \right)^2}   dv\\
& =  \displaystyle \frac{\sigma}{\pi} \frac{(s+a)(t+a)}{x^2 + \sigma^2(t+a)^2} \\
& \leq  M\left( \frac{x}{t+a} \right)  
\end{align*}
The analog computation for proving \eqref{eq:IC} goes as follows. First we have
\begin{equation*}
\left( \frac{ A }{ \rho } \right) (t,x) = {\pi} \left( 1 + \left( \frac{x}{t+a} \right)^2 \right) \exp \left( - (1+r) t - r a  \right)  \int_V f^0(x-vt,v) dv 
\end{equation*}
Thanks to the expression of the initial condition, we compute the latest integral:
\begin{equation*}
\int_V f^0(x-vt,v) dv = \frac{1}{\pi} \frac{t+b}{ x^2 + (t+b)^2} e^{ra},
\end{equation*}
Thus, 
\begin{equation*}
\left( \frac{ A }{ \rho } \right) (t,x) = \frac{t+b}{(t+a)^2} \frac{x^2 + (t+a)^2}{ x^2 + (t+b)^2} \exp \left( -(1+r) t  \right) \leq \exp \left( -(1+r) t  \right)
\end{equation*} 
which holds true if $b \geq 1$ and $a \geq b + \frac14$. 
\end{proof}

\eb{
\subsection{Lower bound for the spreading rate  in the gaussian case}

We construct below subsolutions for  \eqref{eq:kinKPP} when  $V = \R$ and the distribution $M$ is a Gaussian distribution. Contrary to the previous Section \ref{sec:supersolution}, the strategy does not rely on a specific computational trick ({\em i.e.} Gaussian distributions are stable by convolution). We rather build a typical subsolution based on the dispersion property of the kinetic transport-scattering operator. This construction heavily relies on preliminary results obtained in \cite{Bouin-Calvez-Grenier-Nadin}. We shall motivate the construction of the subsolution based on these ideas. 

The first fact is that the subsolution we build here is essentially not regular. It is discontinuous along the line $x = vt$, so that it is not affected by the free transport operator. It is also discontinuous along the line $v=-K$ for some (large) $K$. However, this does not cause any further trouble due to the absence of derivatives with respect to velocity.  Moreover the necessary truncation at a certain level $\gamma \in (0,1)$ yields $C^1$ discontinuities. This is not a problem since the PDE is of order one, as opposed to classical reaction-diffusion equations for which such a rough troncature is not possible when seeking subsolutions due to the presence of second-order derivatives. Of course we pay much attention to the nonlocal contributions (integral with respect to the velocity) where this truncature causes  additional difficulties.

The second point to highlight is that long-range dispersion happens via the free transport operator and the redistribution with respect to the velocity (scattering). It is obviously a matter of (small) densities having large velocities. This is exemplified when noticing that the function defined by 
\[ g_2(t,x,v) = \exp\left( - \frac xv\right) M(v)  \, , \quad \text{if }\;   v> \frac x t\, ,\] and zero elsewhere, is a solution of 
\[ \partial_t g_2 + v \partial_x g_2 + g_2 = 0\, , \] and thus a subsolution of \eqref{eq:kinKPP} under the condition that $g \leq M $ everywhere. 
This branch of the solution (restricted to $v> x/t$) will contribute to dropping the mass after redistribution through the nonlocal "source" term $(1 + r) M(v) \rho_g$ in the area $-K < v< x/t$. There is some technical issue due to the fact that $g_2$ is unbounded for $x<0$. We will circumvent by truncating the density. 

Another technical issue stems from the fact that we require negative velocities (up to $v>-K$ for $K$ large enough) in order to maintain enough local redistribution through the nonlocal source term $(1 + r) M(v) \rho_g$. This yields an artificial linear transport to the left side (with velocity $-K$). Nonetheless, as superlinear spreading is expected, this backward linear transport term will not affect the conclusion.  Our strategy consists in working in the moving frame $y = x + Kt$.


We are now in position to define a proper subsolution. Let $K,L$ be two positive (large) bounds on the velocity. Let $\gamma \in (0,1)$ be a truncation level. We stress that the subsolution will automatically satisfy
\[ \ug \leq \gamma M(v)\, . \]
Therefore we are led to find such a function $\ug$ verifying the following inequality,
\begin{equation} \label{eq:subsol} 
\begin{cases}
\partial_t \ug + v\partial_x \ug + \ug \leq (1 + (1 - \gamma) r )  M(v) \rho_g \, , \medskip \\
\ug(0,x,v) = \gamma M(v) {\bf 1}_{x < A} 
\end{cases}
\end{equation}
%
As mentioned above, we shall set $\ug = 0$ for $v<-K$. 
To define the subsolution for $v > -K$, we set the problem in the moving frame $y = x + Kt$. 
Equation \eqref{eq:subsol} writes 
\begin{equation} \label{eq:subsol*} 
\begin{cases}
\partial_t \ug + (v+K) \partial_y \ug + \ug \leq (1 + (1 - \gamma) r )  M(v) \rho_g \, ,  \medskip \\
\ug(0,y,v) = \gamma M(v) {\bf 1}_{y < A} 
\end{cases}
\end{equation}
\begin{figure}
 \begin{center}
 \includegraphics[width = .8\linewidth]{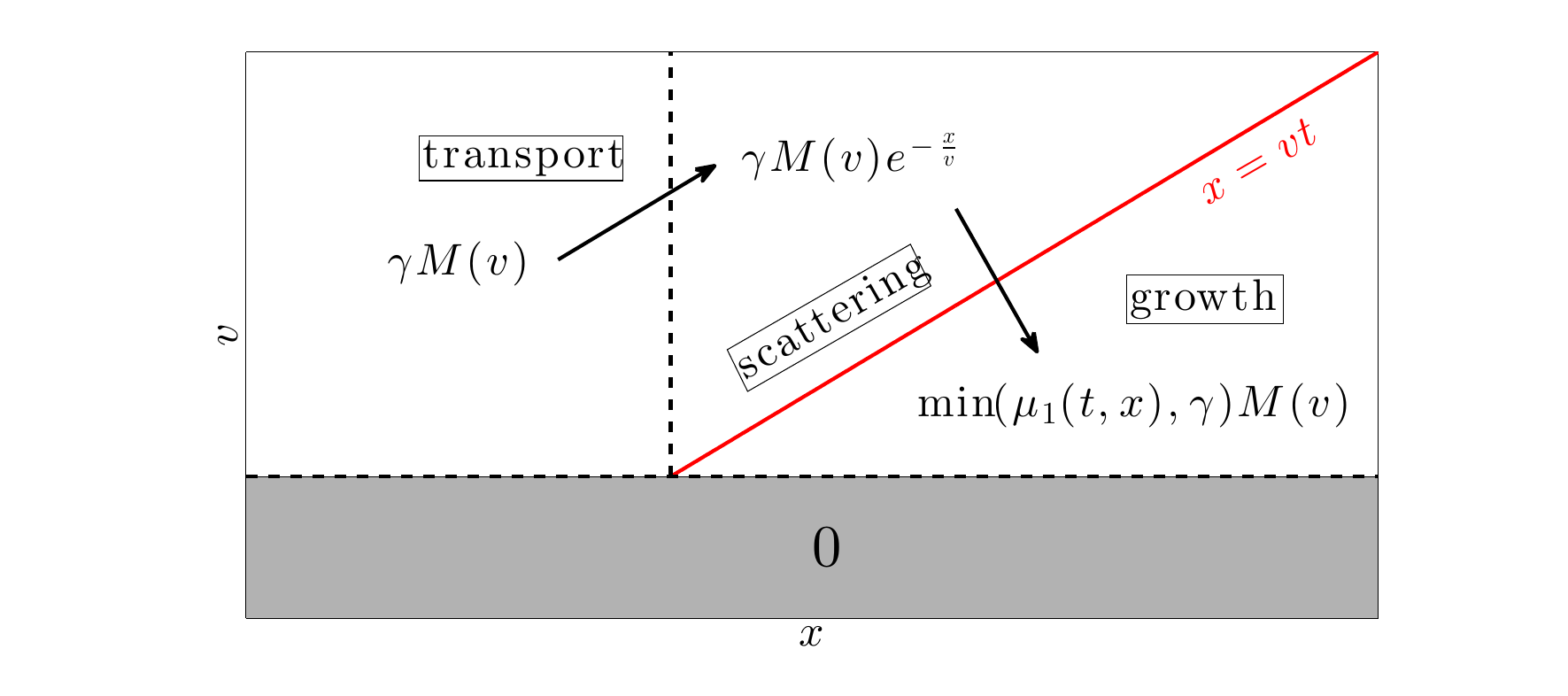}
 \caption{\label{fig:subsol} Schematic view of the subsolution $\ug$. It is defined piecewise. We have set $K = 0$ for the sake of clarity. The mechanism which drives the subsolution can be described as follows. The free transport operator sends very few particles with very high velocity at the edge of the front. They are redistributed, and their density grows exponentially fast. The mass in the lower branch $\{v<x/t\}$, $\mu_1(t,x)$, is computed analytically.}
\end{center}
\end{figure}
We define $\ug$ piecewise:
\begin{itemize}
\item For $y \geq 0$, 
\begin{equation}\label{eq:def ug 1}
\ug(t,y,v) = \begin{cases} \ds\gamma \exp\left( - \frac {y}{v+K}\right) M(v)   \, , &\quad \text{if }\quad  v + K> \dfrac y t \medskip \\
\min\left(\mu_1(t,y),  \gamma\right) M(v)\, , & \quad  \text{if }\quad  0< v + K< \dfrac y t\medskip \\
0\, , & \quad  \text{if }\quad  v + K < 0 
\end{cases}
\end{equation}
\item For $y \leq 0$, 
\begin{equation}\label{eq:def ug 2}
\ug(t,y,v) = \begin{cases} \ds\gamma M(v)   \, , &\quad \text{if }\quad  v + K> 0 \medskip \\
0\, , & \quad  \text{if }\quad  v + K< 0 
\end{cases}
\end{equation}
\end{itemize}
For a schematic view of the subsolution and the growth-dispersion process, see Figure \ref{fig:subsol}.  
The partial mass contained in the mid-zone $\left(-K, y/t - K \right)$ is denoted by $\mu_1(t,y)$. It is defined as the solution to the following ODE,
\begin{equation}
 \partial_t \mu_1 + \mu_1 = (1 + (1 - \gamma) r) \left(  \min\left(\mu_1, \gamma\right) \int_{-K}^L M(v)\, dv + \mu_2 \right) \, ,\label{eq:mu1}
\end{equation}
with the initial datum 
\begin{equation} \mu_1(0,y) =  \gamma  {\bf 1}_{y < A}\, . \label{eq:mu1(0)} \end{equation}
Finally, the source term $\mu_2$ is defined as the partial mass contained in the branch $v> y/t - K$:
\begin{equation}
 \mu_2(t,y) = \ds \gamma \int_{y/t - K }^{\infty} \exp\left(- \frac{y}{v+K}\right) M(v) \,  dv\, .
 \label{eq:mu2} \end{equation}
We stress out that there is a minor discrepancy between the requirement for being a subsolution \eqref{eq:subsol} and the definition of \eqref{eq:mu1}. Namely, the integral contribution runs over $v\in (-K,L)$, although it should naively be $v\in (-K,y/t - K)$. However it is mandatory for the sequel that $\mu_1$ is nonincreasing with respect to $y$, which is not obvious if $L$ is replaced with $y/t - K$ in \eqref{eq:mu1}. Note that $\mu_2$ is indeed nondecreasing with respect to $y$, so that $\mu_1$ defined by \eqref{eq:mu1}-\eqref{eq:mu1(0)} is clearly nondecreasing \eb{with respect to $y$} as well. A simple way to eliminate this discrepancy is to guarantee that $\mu_1(t,y) \geq  \gamma $ when $L> y/t - K$, which is the wrong sign of the discrepancy. This is somehow expected since we know {\em a posteriori} that the front is located in the region $y = \mathcal O(t^{3/2})$, such that the unsaturated area is such that $L < y/t - K$ for large time. For small time, it will be guaranteed by tuning the range of the initial datum, namely the parameter $A$.

The remainder of this Subsection is organized as follows: we first establish some technical estimates of $\mu_2$. Then we deduce that $L> y/t - K$ implies $\mu_1\geq \gamma$. As a consequence, we establish that $\ug$ is a subsolution of \eqref{eq:kinKPP}. Finally, the technical estimates are used again to prove that $\ug$ (in fact, $\mu_1$) exhibits superlinear propagation with the expected scaling $y = \mathcal O(t^{3/2})$.

We  introduce  the modified growth rate 
\[\tilde r = (1 + (1 - \gamma) r) \int_{-K}^L M(v)\, dv - 1\, . \]
Note that $\tilde r < r$, and it can be chosen arbitrary close to $r$, by varying $\gamma, K , L$. 

\begin{lem}
The function $\mu_1$ is given by the Duhamel  formula in the area $\{\mu_1 < \gamma\}$:
\begin{equation} 
\mu_1(t,y) =  e^{\tilde r t} \mu_1(0,y)+ (1 + (1 - \gamma) r)\int_0^t e^{\tilde r (t-s)} \mu_2(s,y) \, ds\, . \label{eq:mu1bis}
\end{equation} 
\end{lem}

\begin{proof}
We notice that $\mu_1$ is nondecreasing with respect to $t$, since the source term $\mu_2$ is nonnegative. Therefore the formula \eqref{eq:mu1bis} is valid up to $\mu_1 = \gamma$.  
\end{proof}

\begin{lem}
The following estimate holds true,
\begin{equation}\label{eq:estimate mu2}
\mu_2(t,y) \geq \begin{cases}
\displaystyle \frac{1 }{r_1(t,y)}\exp\left(-\frac1{2\sigma^2 } \left(\frac{y}{t} - K\right)^2  - t\right)  \,, & \quad \text{if }\quad  \dfrac{y}{t} > v^*(y)\medskip\\
\displaystyle \frac{1  }{ r_2(y) }\exp\left(- \frac{y}{v^*(y)} - \frac{(v^*(y)-K)^2}{2\sigma^2}\right) \,, & \quad \text{if }\quad  \dfrac{y}{t} < v^*(y).
\end{cases}
\end{equation}
where $v^*(y)$ is a velocity satisfying $v^*(y) \sim_{y \to + \infty} \sigma^{2/3}y^{1/3}$ and $r_1$ and $r_2$ are lower order corrections (as compared to the exponential decay), and are described below in the proof.
\end{lem}

\begin{proof}
Before we start with the technical estimates, let us explain briefly why \eqref{eq:estimate mu2} is very much expected. We consider $K = 0$ for the sake of simplicity. The function $-\log\left( e^{-\frac xv} M(v) \right) = \frac xv + \frac{v^2}{2\sigma^2} + \frac12\log ( 2\pi\sigma^2 ) $ admits a global minimum with respect to $v>0$, attained at $v^* = \sigma^{2/3}x^{1/3}$. It should be discussed whether this minimum lies in the area $v>\frac x t$ or not. In any case, the integral $\mu_2$ is close to the value of the corresponding exponential maximum, up to lower order corrections included in $r_{1,2}$.
\begin{figure}
 \begin{center}
 \includegraphics[width = .6\linewidth]{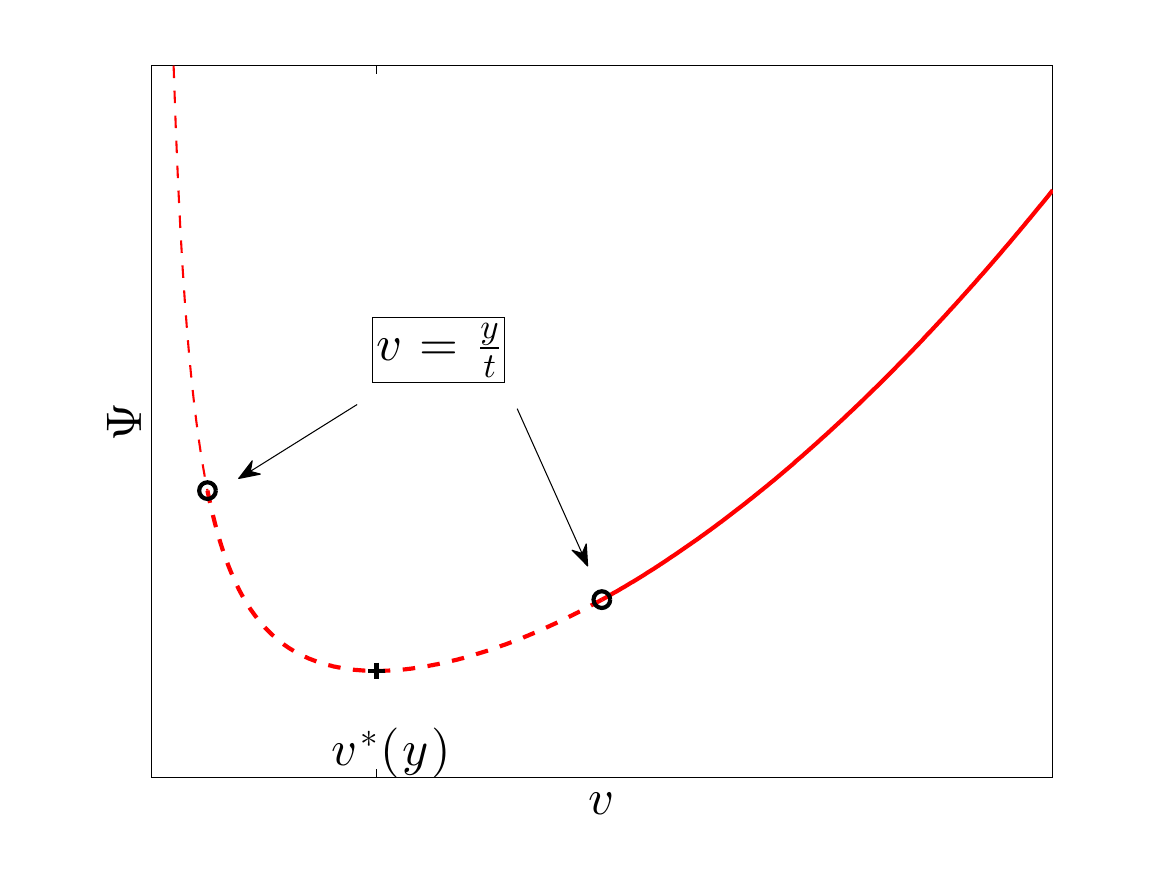}
 \caption{\label{fig:int-mu2} Scheme of the proof of the estimate \eqref{eq:estimate mu2}: the relative positions of $\frac yt$ and $v^*(y)$ have to be discussed. In any case, we restrict to the nondecreasing branch $v> v^*(y)$ to estimate $\mu_2(y)$, and we apply a suitable change of variables in this branch.}
\end{center}
\end{figure}

We consider now a general $K$. To estimate $\mu_2$, let us rewrite
\begin{equation*}
\mu_2(t,y) = \ds \frac{\gamma}{\sigma \sqrt{2 \pi}} \int_{\frac{y}{t}}^{\infty} e^{- \frac{y}{v}} e^{-\frac{(v-K)^2}{2 \sigma^2}} \,  dv\,. 
\end{equation*}
We observe that $\Psi := v \mapsto \frac yv + \frac{(v-K)^2}{2\sigma^2}$ has a minimum on $\R^+$, attained at the velocity $v^*(y)$ given by the nonnegative root of
\begin{equation}\label{eq:v*}
v^2(v-K) = \sigma^2 y,
\end{equation}
see Figure \ref{fig:int-mu2}. As a byproduct of this first order condition, we get \eb{directly} that $v^*(y) \geq K$. We get also that 
\begin{equation}\label{eq:equivv*}
v^*(y) \sim \sigma^{2/3}y^{1/3}\,, \quad \textrm{and accordingly, } \quad \Psi(v^*(y)) \sim \frac32 \left( \frac{y}{\sigma} \right)^{2/3}\, , \quad \text{as}\, y\to +\infty\, .
\end{equation}
This equivalents will be of crucial importance in the following Lemmas to estimate properly the propagation. 
We can now uniquely define the change of variables
$\varphi : \left( \Psi\left(v^*(y)\right) , + \infty \right) \rightarrow \left( v^*(y) , + \infty \right)
$
defined by
\begin{equation}\label{def:Phi}
\frac{\left( \varphi(u) - K \right)^2}{2\sigma^2} + \frac{y}{\varphi(u)} = u.
\end{equation}
Moreover, $\varphi$ is increasing as the inverse function of an increasing function. Multipying by $\varphi(u)$ and differentiating \eqref{def:Phi} yields 
\begin{equation*}
\forall u \in \left( \Psi\left(v^*(y)\right) , + \infty \right) , \quad \varphi'(u) = \frac{\varphi(u)}{\frac{1}{2\sigma^2} \left( 3\varphi^2(u) - 4K \varphi(u) +K^2 \right)  - u}.
\end{equation*}
We distinguish naturally between two cases, depending on the relative positions of $\frac yt$ and $v^*(y)$. 

\medskip

\noindent{\bf \# Step 1: The case $\frac{y}{t} \geq v^*(y)$.} 
We directly apply the  change of variables:
\begin{equation*}
\ds \frac{\gamma}{\sigma \sqrt{2 \pi}} \int_{\frac{y}{t}}^{\infty} e^{- \frac{y}{v}} e^{-\frac{(v-K)^2}{2 \sigma^2}} \,  dv = \frac{\gamma}{\sigma\sqrt{2\pi}} \int_{\Psi\left( \frac yt \right) }^{\infty} e^{-u} \varphi'(u)\,  du\,,
\end{equation*}
We need some estimate from below of $\varphi'(u)$. We first deduce from $\varphi'(u) \geq 0$ that 
\begin{equation*}
3\varphi^2(u) - 4K \varphi(u) +K^2 \geq 2\sigma^2 u\,.
\end{equation*}
It yields that necessarily
\begin{equation*}
\varphi(u) \leq \frac23 K - \sqrt{\frac{K^2}{9} + \frac{2\sigma^2}{3} u}\,, \quad \textrm{ or } \quad \varphi(u) \geq \frac23 K + \sqrt{\frac{K^2}{9} + \frac{2\sigma^2}{3} u}\,.
\end{equation*}
The first alternative is impossible since $\varphi(u) \geq v^*(y)\geq  K$.
On the other hand, one deduce from the very definition \eqref{def:Phi} and  $\varphi(u) \geq  K$ that
\begin{equation*}
\begin{array}{lcl}
\ds \frac{1}{2\sigma^2} \left( 3 \varphi^2(u) - 4K \varphi(u) +K^2 \right)  - u  & = & \ds \frac{3}{2\sigma^2} \left(  \left( \varphi(u) - \frac{2K}{3} \right)^2 - \frac{K^2}{9} \right)  - u \, , \\
& \leq & \ds \frac{3}{2\sigma^2} \left( \varphi(u) - K \right)^2  - u  \, ,\\
& = & 3u - 3 \frac{y}{\varphi(u)} - u \,, \\
& \leq & 2u.
\end{array}
\end{equation*}
We deduce that 
\begin{equation*}
\varphi'(u) \geq  \frac{\frac{2}{3}K + \sqrt{\frac{K^2}{9} + \frac{2\sigma^2}{3} u}}{2u} \geq \left(\frac{2\sigma^2}{3}\right)^\frac12 \frac{1}{2 \sqrt{u}}\, .
\end{equation*}
We obtain as a consequence,
\begin{equation*}
\mu_2(t,y) \geq \frac{\gamma}{\sigma\sqrt{2\pi}} \int_{\Psi\left( \frac yt \right) }^{\infty}  e^{-u} \left(\frac{2\sigma^2}{3}\right)^\frac12 \frac{du}{2 \sqrt{u}} = \frac{\gamma }{\sqrt{3\pi}}\int_{\left( \Psi\left( \frac yt \right)\right)^{\frac12}}^{\infty}   e^{-v^2} dv. 
\end{equation*}
Next, we apply a quantitative estimate for the remainder of the gaussian integral \cite[p. 298]{Abram}:
\begin{equation}\label{eq:erf}
\forall x \geq 0, \quad \int_{x}^{\infty} e^{-x^2} dx > \frac{e^{-x^2}}{x + \sqrt{x^2 + 2}}\, .
\end{equation}
Consequently we obtain
\begin{equation*}
\mu_2(t,y)   \geq   \ds \frac{\gamma}{\sqrt{3\pi}}   \frac{e^{- \Psi\left( \frac yt \right)}}{\left( \Psi\left( \frac yt \right)\right)^{\frac12} + \left( \Psi\left( \frac yt \right) + 2\right)^{\frac12}}.
\end{equation*}

\medskip

\noindent{\bf \# Step 2: The case $\frac{y}{t} \leq v^*(y)$.} There, we simply neglect the decreasing part of $\Psi$ (see Figure \ref{fig:int-mu2}). The result is 
 a direct consequence of the  previous calculation. Indeed, we have:
\begin{equation}
\mu_2(t,y) = \ds \frac{\gamma}{\sigma \sqrt{2 \pi}} \left( \int_{\frac{y}{t}}^{v^*(y)} e^{- \frac{y}{v}} e^{-\frac{(v-K)^2}{2 \sigma^2}} \,  dv\, + \ds \int_{v^*(y)}^{\infty} e^{- \frac{y}{v}} e^{-\frac{(v-K)^2}{2 \sigma^2}} \,  dv\,\right). 
\end{equation}
After neglecting the first contribution, and following the same lines as in Step 1 for the second integral,  we get eventually: 
\begin{equation*}
\mu_2(t,y) \geq \ds \frac{\gamma}{\sqrt{3\pi}}  \frac{e^{- \Psi\left( v^*(y) \right)}}{\left( \Psi\left( v^*(y) \right)\right)^{\frac12} + \left( \Psi\left( v^*(y) \right) + 2 \right)^{\frac12}}.
\end{equation*}
\end{proof}

\begin{lem}\label{lem:mu1 from below}
There exists $A_0$  such that for all $A\geq A_0$, the following estimate holds true,
\begin{equation*}
\left( \forall (t,y) \in \R^+ \times \R^+\right) \quad \frac{y}{t} - K < L \, \Longrightarrow  \, \mu_1(t,y) \geq \gamma\,.
\end{equation*}
\end{lem}

\begin{proof}
Because $\mu_1$ is nonincreasing with respect to $y$, it is sufficient to prove that 
\begin{equation*}
\forall t \in \R^+, \quad \mu_1(t,(K+L)t) \geq \gamma.
\end{equation*}
We recall the definition of $\mu_1$ in the area $\{\mu_1<\gamma\}$,
\begin{equation}
\label{eq:def mu1(t,Kt)}
\mu_1(t,(K+L)t) =  \gamma e^{\tilde r t}  {\bf 1}_{(K+L)t < A} + (1 + (1 - \gamma) r)\int_0^t e^{\tilde r (t-s)} \mu_2(s,(K+L)t) \, ds\, . 
\end{equation}
We observe that for $t < \frac{A}{K+L}$, the condition $\mu_1(t,(K+L)t) \geq \gamma$ is  fulfilled due to the initial datum. Then we estimate the integral term in the r.h.s. of \eqref{eq:def mu1(t,Kt)}. The following estimate is crucial since it moreless contains the superlinear propagation behavior. Let $\alpha \in (0,1)$ to be chosen later. We have 
\begin{equation*}
\int_0^t e^{\tilde r (t-s)} \mu_2(s,(K+L)t) \, ds \geq \mu_2( \alpha t,(K+L)t)\int_{\alpha t}^t e^{\tilde r (t-s)} ds.
\end{equation*}
As a consequence, when $t$ is large enough such that 
\begin{equation*}
\frac{K+L}{\alpha} = \frac{(K+L)t}{\alpha t} \leq v^*\left( (K+L)t \right)\, ,
\end{equation*}
we have
\begin{equation}\label{eq:prop}
\mu_1(t,(K+L)t) \geq \mu_2( \alpha t,(K+L)t)\int_{\alpha t}^t e^{\tilde r (t-s)} ds \geq \frac{1}{\tilde r} \left( e^{ \tilde r (1 - \alpha)t } - 1 \right) \frac{\gamma}{\sqrt{3\pi}}   \frac{e^{- \Psi\left( v^*((K+L)t) \right)}}{r_2( (K+L)t )}
\end{equation}
Thus, it is enough to guarantee that the following estimate holds true, 
\begin{equation*}
\frac{1}{\tilde r} \left( e^{ \tilde r (1 - \alpha)t } - 1 \right) \frac{1}{\sqrt{3\pi}}  \frac{e^{- \Psi\left( v^*((K+L)t) \right)}}{r_2( (K+L)t )} \geq 1, 
\end{equation*}
for $t$ large enough, say $t> T_0$. It is indeed the case  since \eqref{eq:equivv*} implies that
\[ \log \left(\mu_1(t,(K+L)t)\right) \geq \tilde r (1 - \alpha) t + O (t^{3/2} ) + O(\log t)\, . \]
The conclusion is straightforward, provided we choose $A$ large enough such that $T_0 = \frac{A}{K+L}$ satisfies 
\[v^*(A)>\frac{K+L}\alpha\, .\]
Finally we notice that $\alpha$ is still a free parameter in the range $(0,1)$. It will be fixed in the next Lemma when optimizing the propagation behavior. 
%
\end{proof}


%

\begin{thm}
Let the constants $K,L,\gamma$ chosen as above. Let $\alpha  < \frac{\tilde r}{\tilde r + 2} $, and choose $A$ accordingly.
The function $\ug$ defined by \eqref{eq:def ug 1}-\eqref{eq:def ug 2} is a subsolution of \eqref{eq:kinKPP}. Moreover it exhibits a superlinear spreading with rate $\mathcal O(t^{3/2})$. More precisely, the point $y(t)$ such that  $\mu_1(t,y(t)) = \frac\gamma2$ is such that  $y(t) \geq \sigma (\alpha t)^{3/2}$ for $t$ sufficiently large.
\end{thm}

\begin{proof}
We first observe that for all $y>0$ we have $v^*(y)\geq \sigma^{2/3}y^{1/3}$ \eqref{eq:v*}. On the other hand, there exists $Y_0$ such that for $y\geq Y_0$ we have 
\begin{equation*}
\Psi\left( v^*(y) \right) \leq 2 \sigma^{-2/3}y^{2/3}\, .
\end{equation*}
We define the zone
\begin{equation*}
\Y_t = \left\{ y \, : \, Y_0 \leq y \leq \sigma \left( \alpha t \right)^{3/2}\right\}\, .
\end{equation*}
For $y\in \Y_t$ we have immediately 
\begin{equation*}
 \frac{y}{\alpha t} \leq \sigma^{2/3}y^{1/3} \leq v^*\left( y \right)\, ,
\end{equation*}
where we have used that $y\geq Y_0$ to justify the last inequality. 
We recall the estimation of $\mu_1$ \eqref{eq:prop} which holds true for $y\in \Y_t$: 
\begin{align*}
\mu_1(t,y) & \geq \mu_2( \alpha t,y)\int_{\alpha t}^t e^{\tilde r (t-s)} ds \\
& \geq  \frac{1}{\tilde r} \left( e^{ \tilde r (1 - \alpha)t } - 1 \right) \frac{\gamma}{\sqrt{3\pi}}   \frac{e^{- \Psi\left( v^*(y) \right)}}{r_2( y )} \\
&\geq  \frac{1}{\tilde r} \left( e^{ \tilde r (1 - \alpha)t } - 1 \right) \frac{\gamma}{\sqrt{3\pi}} \frac{e^{- 2 \sigma^{-2/3}y^{2/3}}}{r_2( y )}.
\end{align*}
This yields
\[ (\forall y\in \Y_t)\quad \log( \mu_1(t,y) ) \geq \left( \tilde r (1 - \alpha) - 2 \alpha \right) t + O(\log t)\, .  \]
As a consequence, choosing $\alpha \in (0,1)$ such that 
\[ \alpha < \frac{\tilde r}{\tilde r + 2}\, , \] 
ensures that for sufficiently large times, $\mu_1(t,y) \geq \gamma$, and thus the front has already passed through $\Y_t$. 
\end{proof}

\medskip
\paragraph*{Acknowledgements.} The authors wish to thank Jimmy Garnier and Emmanuel Grenier for enlightening discussions concerning the correct spreading rate in the gaussian case. 
}

\section*{Appendix}

We give in this \eb{Appendix} the proof of Propositions \ref{Cauchypbm} and \ref{prop-mp}. Well-posedness relies on a fixed point argument which is also used for the comparison principle. We first state two Lemmas.


\begin{lem}\label{lem:eulin}
 Let $a,b \in \mathcal{C}^0_b \left(\R_+ \times \R\times V\right)$ and $g^0 \in \mathcal{C}^0_b \left(\R , L^1( V)\right)$. 
Then  there exists a unique function $g \in \mathcal{C}^0_b \left( \R_+\times \R, L^1( V)\right)$ such that 
\begin{equation} \label{eq:linab} \left\{\begin{array}{lcl}
\partial_t g + v\partial_x g + a(t,x,v)g = b(t,x,v) \rho_g \quad & \hbox{in } &\R_+ \times \R\times V,\\
g(0,x,v) = g^0 (x,v) \quad & \hbox{in } &\R\times V,
                                         \end{array}\right. \end{equation}
in the sense of distributions. 
This solution also satisfy the Duhamel formula:
\begin{multline} \label{eq:linab2} g(t,x,v) = g^0 (x-vt,v) e^{-\int_0^t a \left(s,x-(t-s)v,v\right)ds} \\+ \int_0^t e^{-\int_s^t a(\tau,x-(t-\tau)v,v)d\tau} b\left(s,x-v(t-s),v\right) \eb{\rho_g \left(s,x-v(t-s)\right)}\, ds\, .\end{multline}
Moreover, if $b\geq 0$ and $g^0\geq 0$, then $g \geq 0$ in $\R_+ \times \R\times V$. 
\end{lem}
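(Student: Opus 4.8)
The plan is to represent the solution through the Duhamel formula \eqref{eq:linab2} and to obtain it as the fixed point of the associated integral operator. The essential structural fact is that, for frozen coefficients $a,b$, the source $b\,\rho_g$ depends \emph{linearly} on $g$ via the bounded operator $g\mapsto\rho_g=\int_V g(\cdot,\cdot,v')\,dv'$, so the problem is amenable to a contraction argument on a time step that does not see the data.

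First I would fix $T>0$, work in the Banach space $X_T=\mathcal{C}^0_b\big([0,T]\times\R,L^1(V)\big)$ with norm $\|g\|_{X_T}=\sup_{(t,x)\in[0,T]\times\R}\|g(t,x,\cdot)\|_{L^1(V)}$, and define $\Phi\colon X_T\to X_T$ by letting $\Phi[g](t,x,v)$ be the right-hand side of \eqref{eq:linab2} with $\rho_g(s,y)=\int_V g(s,y,v')\,dv'$. One checks that $\Phi[g]$ is continuous and bounded as a map into $L^1(V)$: the $\mathcal{C}^0_b$ regularity of $a,b,g^0$ makes the integrals along characteristics well defined and jointly continuous, the exponential weights lie in $[e^{-T\|a\|_\infty},e^{T\|a\|_\infty}]$, and $|\rho_g(s,y)|\le\|g(s,y,\cdot)\|_{L^1(V)}\le\|g\|_{X_T}$. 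Subtracting the Duhamel formulas for two arguments $g_1,g_2$ the data term cancels and only the $\rho$-coupling survives, so that, using $\|\rho_{g_1}-\rho_{g_2}\|_{L^\infty}\le\|g_1-g_2\|_{X_T}$,
\[
\|\Phi[g_1]-\Phi[g_2]\|_{X_T}\;\le\;T\,e^{T\|a\|_\infty}\Big(\sup_{s,y}\int_V|b(s,y,v)|\,dv\Big)\,\|g_1-g_2\|_{X_T}.
\]
Hence $\Phi$ is a contraction as soon as $T$ is small, the threshold depending only on $\|a\|_\infty$ and on the $L^1_v$-size of $b$ (finite when $V$ is bounded, and in the paper's applications always bounded since there $b$ is a multiple of $M\in L^1(V)$), but not on $g^0$. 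Banach's fixed point theorem produces a unique $g\in X_T$ with $\Phi[g]=g$, which by construction satisfies \eqref{eq:linab2}.

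Since the admissible $T$ is independent of the data, I would iterate the construction on the successive intervals $[kT,(k+1)T]$, taking $g(kT,\cdot,\cdot)\in\mathcal{C}^0_b(\R,L^1(V))$ as new initial datum, to obtain a global solution $g\in\mathcal{C}^0_b(\R_+\times\R,L^1(V))$; uniqueness propagates from one step to the next, or, more directly, follows from a Gronwall estimate on the difference of two solutions. To identify $g$ as a distributional solution of \eqref{eq:linab}, I would plug \eqref{eq:linab2} into the weak formulation, i.e.\ test against $(\partial_t+v\partial_x)\varphi$ for $\varphi\in\mathcal{C}^\infty_c$ and integrate by parts along characteristics, the continuity of all quantities legitimising the manipulation; the same computation read backwards shows that any distributional solution in $X_T$ obeys \eqref{eq:linab2}, so the two notions of solution coincide and uniqueness holds in the distributional class as well. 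Positivity finally comes for free: if $b\ge0$ and $g^0\ge0$, then $\Phi$ leaves the closed cone $\{g\in X_T:g\ge0\}$ invariant (every factor in \eqref{eq:linab2} is nonnegative once $g\ge0$), hence the Picard iterates started from $0$ are nonnegative and so is their limit.

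The step I expect to be most delicate is not the fixed point — which is entirely standard — but the preliminary bookkeeping that gives \eqref{eq:linab2} a meaning in the stated spaces: measurability and $L^1_v$-integrability of the ``diagonal'' trace $v\mapsto g^0(x-vt,v)$, joint continuity in $(t,x)$ of the characteristic integrals, and, if one insists on an unbounded velocity set, the integrability in $v$ of $b$ along characteristics (automatic for bounded $V$ and for all uses of the lemma in this paper). Once these are settled, the contraction, globalisation, and positivity arguments are routine.
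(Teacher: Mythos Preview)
Your proof is correct and follows essentially the same route as the paper: define the Duhamel operator on $\mathcal{C}^0_b([0,T]\times\R,L^1(V))$, show it is a contraction for small $T$ depending only on $\|a\|_\infty$ and the size of $b$, iterate globally since the step is data-independent, and obtain positivity from invariance of the nonnegative cone. Your treatment is in fact slightly more careful than the paper's in two respects---you track the $L^1_v$ norm of $b$ in the Lipschitz constant (the paper writes $\|b\|_{L^\infty}$ and tacitly uses bounded $V$), and you address the equivalence between the Duhamel formulation and the distributional one---but these are refinements of, not departures from, the paper's argument.
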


\begin{proof}[\bf Proof of Lemma \ref{lem:eulin}]
For $T>0$ we define the operator
\begin{equation} \label{eq:defAT}\begin{array}{rrcl} A_T : & \mathcal{C}^0_b \left((0,T)\times \R, L^1 (V)\right) &\to& \mathcal{C}^0_b \left((0,T)\times \R, L^1 (V)\right) \smallskip \\
   & g&\mapsto& \widetilde{g}
  \end{array}\end{equation}
where
\begin{multline}
\widetilde{g} (t,x,v) = g^0 (x-vt,v) e^{-\int_0^t a \left(s,x-(t-s)v,v\right)ds} \\+ \int_0^t e^{-\int_s^t a(\tau,x-(t-\tau)v,v)d\tau} b\left(s,x-v(t-s),v\right) \eb{\rho_g \left(s,x-v(t-s)\right)}\, ds\, .
\end{multline}
Take $g_1,g_2 \in  \mathcal{C}^0_b \left((0,T)\times \R, L^1 (V)\right)$ and define $\widetilde{g}_1 = A_T g_1$ and $\widetilde{g}_2 = A_T g_2$. Assume that $a\not\equiv 0$ over $(0,T) \times \R\times V$. 
For all $(t,x) \in (0,T) \times \R$, one has:
\begin{align*}
&\int_V \left|\widetilde{g}_1 (t,x,v) -\widetilde{g}_2 (t,x,v)\right|\,dv \\
 &\leq \int_V \int_0^t e^{-\int_s^t a(\tau,x-(t-\tau)v,v)d\tau} b\left(s,x-v(t-s),v\right) \left|\rho_{g_1} \eb{\left(s,x-v(t-s)\right)} - \rho_{g_2} \left(s,x-v(t-s)\right)\right|\, dvds\\
&\leq \int_0^t e^{(t-s) \|a\|_{L^\infty}} \|b\|_{L^\infty} ds \times \sup_{(t,x) \in (0,T) \times \R} \int_V |g_1(t,x,v) - g_2 (t,x,v)|\,  dv\\
&\leq  \frac{1}{\|a\|_{L^\infty}} \left(e^{T \|a\|_{L^\infty}}-1\right)  \|b\|_{L^\infty} \times \sup_{(t,x) \in (0,T) \times \R} \int_V |g_1(t,x,v) - g_2 (t,x,v)|\,  dv\, .
  \end{align*}
Hence, there exists $T_0>0$ such that for all $T\in (0,T_0)$, $A_T$ is a contraction over $\mathcal{C}^0_b \left((0,T)\times \R, L^1 (V)\right)$. 
If $a\equiv 0$ on $(0,T) \times \R\times V$, then such an estimate can be derived similarly. 
Hence, $A_T$ admits a unique fixed point, which satisfies
\eqref{eq:linab2} over $(0,T) \times \R\times V$. This gives the local existence and uniqueness of the solution of \eqref{eq:linab2}. 
Moreover, as $T_0$ does not depend on the initial datum $g^0$, the global existence follows. 

If $b\geq 0$ and $g^0\geq 0$, then $A_T$ preserves the cone of nonnegative functions and thus applying the fixed point theorem in this cone, we get the nonnegativity of $g$. 
%
\end{proof}

\begin{lem} \label{lem:posA_T} Assume that $b$ is everywhere positive and that $V$ is an interval. 
Then if $g^0 \in \mathcal{C}_b^0 (\R_+ \times \R\times V)$ is nonnegative and if
there exists $(x_0,v_0)\in \R\times V$ such that $g^0(x_0,v_0)>0$, letting $g$ the unique solution of \eqref{eq:linab}, one has 
$g (t,x,v) >0$ for all $(t,x,v) \in \R_+ \times \R\times V$ such that $|x-x_0|< v_{\rm max} t$. 
\end{lem}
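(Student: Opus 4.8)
The plan is to argue purely from the Duhamel representation \eqref{eq:linab2} and the nonnegativity part of Lemma \ref{lem:eulin}, using the continuity of $g^0$ but not that of $g$. Since $b\ge 0$ and $g^0\ge 0$, Lemma \ref{lem:eulin} gives $g\ge 0$, hence $\rho_g\ge 0$, together with the two lower bounds
\begin{equation*}
g(t,x,v)\ \ge\ g^0(x-vt,v)\,e^{-\|a\|_\infty t},\qquad g(t,x,v)\ \ge\ \int_0^t e^{-\|a\|_\infty(t-s)}\,b\bigl(s,x-v(t-s),v\bigr)\,\rho_g\bigl(s,x-v(t-s)\bigr)\,ds ,
\end{equation*}
where we used $e^{-\int_s^t a}\ge e^{-\|a\|_\infty(t-s)}>0$ since $a\in\mathcal C^0_b$.

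First I would establish an \emph{ignition} estimate. Continuity of $g^0$ and $g^0(x_0,v_0)>0$ provide $\delta,\eta>0$ with $g^0>0$ on $(x_0-\delta,x_0+\delta)\times\bigl((v_0-\eta,v_0+\eta)\cap V\bigr)$, the latter velocity set having positive Lebesgue measure because $V$ is an interval. Feeding this into the first Duhamel bound gives, for $s_0>0$ small enough, that $g>0$ on the box $(0,s_0)\times(x_0-\tfrac\delta2,x_0+\tfrac\delta2)\times\bigl((v_0-\eta,v_0+\eta)\cap V\bigr)$, and therefore $\rho_g(s,y)>0$ for all $0<s<s_0$ and $|y-x_0|<\tfrac\delta2$.

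The heart of the proof is the claim that $\rho_g(t,x)>0$ for every $t>0$ with $|x-x_0|<v_{\rm max}t$. Given such a point, set $w=(x-x_0)/t$; then $|w|<v_{\rm max}$, so $w$ lies in the interior of $V$ and admits an open neighborhood $N\subset\mathrm{int}\,V$ of positive measure. Using the identity $x-v(t-s)-x_0=(w-v)t+vs$, one sees that for $v\in N$ sufficiently close to $w$ and $0<s<s_1$ with $s_1$ small, the point $\bigl(s,x-v(t-s)\bigr)$ falls inside the ignition box, so $\rho_g\bigl(s,x-v(t-s)\bigr)>0$; plugging this into the second Duhamel bound and invoking the hypothesis that $b$ is everywhere positive yields $g(t,x,v)>0$ for all $v\in N$, hence $\rho_g(t,x)\ge\int_N g(t,x,v)\,dv>0$. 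Finally, for an arbitrary $(t,x,v)$ with $|x-x_0|<v_{\rm max}t$, the inequality $|x-v\tau-x_0|<v_{\rm max}(t-\tau)$ holds at $\tau=0$ and hence, by continuity in $\tau$, on some interval $\tau\in(0,\eta_0)$; along that interval $\rho_g(t-\tau,x-v\tau)>0$ by the claim, and a last application of the second Duhamel bound (again using $b>0$) gives $g(t,x,v)>0$.

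The hard part will be exactly the claim on $\rho_g$: a single characteristic emanating from the ignition box only reaches the thin set of points $x_0+vt$ with $v\simeq v_0$, so one cannot link the bump at $(x_0,v_0)$ to a generic cone point by one trajectory. The device is to send, from near the target $(t,x)$, a whole tube of characteristics with velocities near the geometrically prescribed speed $w=(x-x_0)/t$ \emph{backward} into the ignition box at small times, and to exploit that $b>0$ everywhere lets each of these trajectories collect strictly positive mass from $\rho_g$ regardless of the mismatch between $w$ and $v_0$. The remaining verifications are routine: that all the velocity neighborhoods involved carry positive Lebesgue measure (immediate since $V$ is an interval and $w$ is interior to it), the elementary estimate $x-v(t-s)-x_0=(w-v)t+vs$, and the lower bound on the exponential kernels. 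The case $V=\R$ (i.e. $v_{\rm max}=+\infty$) is covered verbatim, the cone condition then reducing to $t>0$.
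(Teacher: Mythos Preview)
Your proof is correct and rests on the same geometric idea as the paper's: from a cone point $(t,x)$, the backward characteristic with velocity $w=(x-x_0)/t\in\mathrm{int}\,V$ reaches the initial positivity region near $x_0$, and the strict positivity of $b$ in the Duhamel integral transmits mass along it. The paper packages this by contradiction: assume $\rho_g(t,x)=0$, integrate \eqref{eq:linab2} over $V$ to force $\rho_g\bigl(s,x-v(t-s)\bigr)=0$ for all $(s,v)\in(0,t)\times V$, let $s\to 0$ and pick $v=(x-x_0)/t$ to contradict $\rho_g(0,x_0)>0$. You give the direct counterpart, first manufacturing an ignition box in strictly positive time where $\rho_g>0$, then shooting a tube of characteristics from $(t,x)$ backward into it. The paper's route is shorter and leans on continuity of $\rho_g$ down to $s=0$ (legitimate since $g\in\mathcal C^0_b(\R_+\times\R,L^1(V))$); your route trades that for an explicit quantitative estimate on the ignition box, using only continuity of $g^0$. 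The final step---going from $\rho_g>0$ on the cone to $g>0$ via one more Duhamel bound near $s=t$---is the same in both arguments.
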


\begin{proof}[\bf Proof of Lemma \ref{lem:posA_T}]
First, assume by contradiction that there exists $(t,x) \in \R_+ \times \R$ such that $\rho_g (t,x) = 0$, with $|x-x_0|<v_{\rm max}t$. 
Then integrating \eqref{eq:linab2} over $V$, one gets 
\begin{multline*}
0 = \rho_g (t,x) = \int_{v\in V} g^0 (x-vt,v) e^{-\int_0^t a(s,x-(t-s)v,v)ds}dv \\+ \int_{v\in V}\int_0^t e^{-\int_s^t a(\tau,x-(t-\tau)v,v)d\tau} b\left(s,x-v(t-s),v\right) \rho_g \left(s,x-v(t-s)\right)\,dsdv.
\end{multline*}
Hence, $\rho_g \left(s,x-v(t-s)\right)=0$ for all $v\in V$ and $s\in (0,t)$. Letting $s\to 0$, one gets $\rho_g \left(0,x-vt\right)=0$ for all $v\in V$. As $|x-x_0|< v_{\rm max} t$ 
and $V$ is an interval, one can take $v\in V$ such that $x-vt = x_0$, leading to $\rho_g (0,x_0)=0$. This is a contradiction since, as $g$ is continuous, nonegative and $g(0,x_0,v_0)>0$, one 
has $\rho_g (0,x_0)>0$. Hence $\rho_g (t,x,v) >0$ for all $(t,x,v) \in (0,T) \times R\times V$ such that $|x-x_0|<v_{\rm max}t$.

Next, as
$$g (t,x,v) = g^0 (x-vt,v) e^{-\int_0^t a(s,x-(t-s)v,v)ds} + \int_0^t e^{-\int_s^t a(\tau,x-(t-\tau)v,v)d\tau} b\left(s,x-v(t-s),v\right) \rho_g \left(s,x-v(t-s)\right)\,ds\,,$$
it follows from the first step that $g(t,x,v)>0$ as soon as there exists $s\in (0,t)$ such that $|x-x_0-v(t-s)| < v_{\rm max}s$, which also reads:
$|x-x_0| < v_{\rm max} t$. 
\end{proof}

\begin{proof}[\bf Proof of Proposition \ref{prop-mp}.]
Define $w=g_1 - g_2$. As in the proof of Lemma 6 in \cite{Cuesta12}, we first remark that this function satisfies
\begin{equation} \label{eq:mph}
  \partial_t w+  v \partial_x w + \left(1+r\rho_{g_1}\right)w \geq  \left( M(v) + r \left( M(v) - g_2\right) \right) \rho_w \hbox{ in } \R_+\times \R\times V,
 \end{equation}
with $w(0,x,v)\geq 0$ for all $(x,v) \in\R\times V$. We define $a = 
1+r\rho_{g_1}$ and $b =  M(v) + r \left( M(v) - g_2\right) $. Writing the integral formulation as in the proof of Lemma \ref{lem:eulin} gives
\begin{equation*} 
w \left(t,x,v\right)  \geq
\int_0^t e^{-\int_s^t a(\tau,x-(t-\tau)v,v)d\tau} b\left(s,x-v(t-s),v\right) \rho_w \left(s,x-v(t-s)\right)\,ds\,,
\end{equation*}
and thus $w \geq A_T w$ in $(0,T) \times \R\times V$ for some operator $A_T$ which is monotone and contractive when $T$ is small enough. 
It follows that $w \geq A_T^n w$ for all $n\geq 1$. Since $A_T$ is contractive  the sequence $\left(A_T^n w\right)_n$ converges to $0$.  
We conclude that $w\geq 0$, meaning that $g_1 \geq g_2$. 

Next, assume that $\inf_{V} M>0$, $V$ is an interval, and that there exists $(x_0,v_0)$ such that $g_2 (0,x_0,v_0)> g_1 (0,x_0,v_0)$. 
We can follow the proof of Lemma \ref{lem:posA_T}, where $b$ defined above is positive everywhere. We deduce that 
$w(t,x,v) >0$ as soon as $|x-x_0| < v_{\rm max} t$. 
\end{proof}

\end{document}